\Crefname{equation}{}{}
\numberwithin{equation}{section}
\newtheoremstyle{break}%
  {}{}%
  {\itshape}{}%
  {\bfseries}{}
  {\newline}{}
\theoremstyle{break}
\newtheorem{definition}{Definition}[section]
\newtheorem{theorem}[definition]{Theorem}
\newtheorem{corollary}[definition]{Corollary}
\newtheorem{lemma}[definition]{Lemma}
\newtheorem{proposition}[definition]{Proposition}
\newtheorem{maintheorem}{Theorem}
\crefname{maintheorem}{theorem}{theorems}
\theoremstyle{plain}
\newtheorem{remark}[definition]{Remark}
\newcommand{\BB}[1]{\mathbb{#1}}
\newcommand{\N}{\mathbb{N}}
\newcommand{\Z}{\mathbb{Z}}
\newcommand{\R}{\mathbb{R}}
\newcommand{\C}{\mathbb{C}}
\newcommand{\E}{\mathbb{E}}
\renewcommand{\P}{\mathbb{P}}
\newcommand{\Tcal}{\mathcal{T}}
\newcommand{\Fcal}{\mathcal{F}}
\newcommand{\Kcal}{\mathcal{K}}
\newcommand{\Dcal}{\mathcal{D}}
\newcommand{\Gcal}{\mathcal{G}}
\newcommand{\bigO}{\mathcal{O}}
\newcommand{\Scal}{\mathcal{S}}
\newcommand{\Ecal}{\mathcal{E}}
\newcommand{\eps}{\epsilon}
\newcommand{\df}{\operatorname{d}\!}
\newcommand{\dx}{\df x}
\newcommand{\dy}{\df y}
\newcommand{\du}{\df u}
\renewcommand{\dv}{\df v}
\newcommand{\ds}{\df s}
\newcommand{\dr}{\df r}
\newcommand{\dt}{\df t}
\newcommand{\Lap}{\Delta}
\renewcommand{\d}{\, \mathrm{d}}
\renewcommand{\Re}{\mathrm{Re} \,}
\DeclareMathOperator{\diam}{\mathrm{diam}}
\DeclareMathOperator{\dist}{\mathrm{dist}}
\DeclareMathOperator{\supp}{\mathrm{supp}}
\DeclareMathOperator{\Cov}{\mathrm{Cov}}
\DeclareMathOperator{\Var}{\mathrm{Var}}
\newcommand{\cbr}[1]{\left\lbrace #1 \right\rbrace}
\newcommand{\rbr}[1]{\left( #1 \right)}
\newcommand{\abr}[1]{\left[ #1 \right]}
\renewcommand{\abs}[1]{\left | #1 \right |} 
\newcommand{\innerp}[2]{\left\langle #1 , #2 \right\rangle}
\renewcommand{\norm}[1]{\left\lVert#1\right\rVert}
\newcommand{\1}{\mathbf{1}}
\newcommand{\half} {\frac{1} {2} }
\begin{document}
\title{Reconstruction of log-correlated fields from multiplicative chaos measures}
\author{Sami Vihko$^1$}

\date{$^1$University of Helsinki, Department of Mathematics and Statistics, P.O. Box 68, FIN-00014 University of Helsinki, Finland \\[2ex]
    \today
}

\maketitle

\section*{Abstract}
We consider log-correlated random fields $X$ and the associated multiplicative chaos measures $\mu_{\gamma,X}$. Our results reconstruct the underlying field $X$ from the multiplicative chaos measure $\nu_{\gamma,X}$. The new feature of our results is that we allow the dimension $d$ to be arbitrary and cover also the critical case $\gamma=\sqrt{2d}$. In the sub-critical regime $\gamma<\sqrt{2d}$, we allow the fields to be mildly non-Gaussian, that is, the field has the decomposition $X=G+H$ with a log-correlated Gaussian field $G$ and a Hölder-continuos (not necessarily Gaussian) field $H$.

\tableofcontents
\newpage

\section{Introduction and main results}
\label{sec:Introduction and main results}

In this paper, we will consider log-correlated random fields $X$, that is, random fields with a logarithmic singularity on the diagonal of their covariance kernel, and corresponding multiplicative chaos. Formally the multiplicative chaos is the exponential ``$e^{\gamma X}$'', where $\gamma$ is a real parameter. These objects are encountered in diverse mathematical fields such as mathematical physics, random geometry and probabilistic number theory, see for example \cite{DuSh11a,AsJoKu11a,Be17a,AlRhVa13a,DuRhSh14a,FyKe14a,SaWe20a} and references therein. 

In the development of the theory of multiplicative chaos, the field $X$ is usually taken to be a log-correlated Gaussian field, which we denote by $G$ here. Perhaps the most canonical example of a log-correlated Gaussian field is the planar Gaussian free field (GFF). This is a stochastic process with a covariance kernel given by the Green's function of the Laplace operator\footnote{More precisely one usually takes the Green's function of the operator $-\Lap$} on a domain in the plane $\R^2$ with appropriate boundary conditions. It is mathematically a rough object and is not pointwise well-defined as will be discussed below. However, formally we can think of it as a two-dimensional generalization of Brownian motion, which is a random path. Thus, approximately we view the GFF as a rough random surface, see \Cref{fig:Simulation of the GFF}.

\begin{figure}[h!]
\centering
\begin{subfigure}{.42\textwidth}
  \centering
  \includegraphics[width=.9\linewidth]{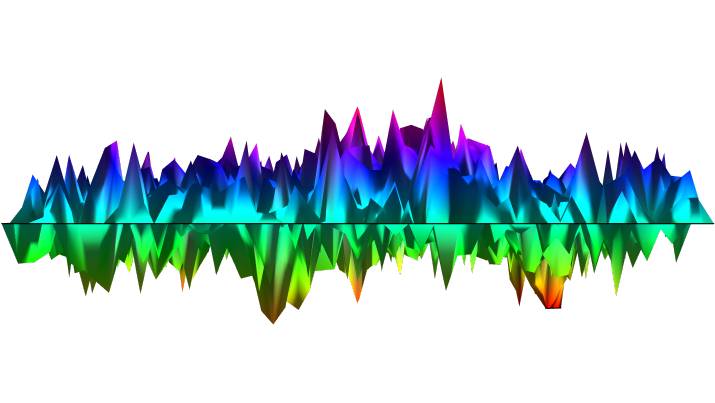}
  \caption{3D front view.}
  \label{subfig:GFF front}
\end{subfigure}
\begin{subfigure}{.42\textwidth}
  \centering
  \includegraphics[width=.9\linewidth]{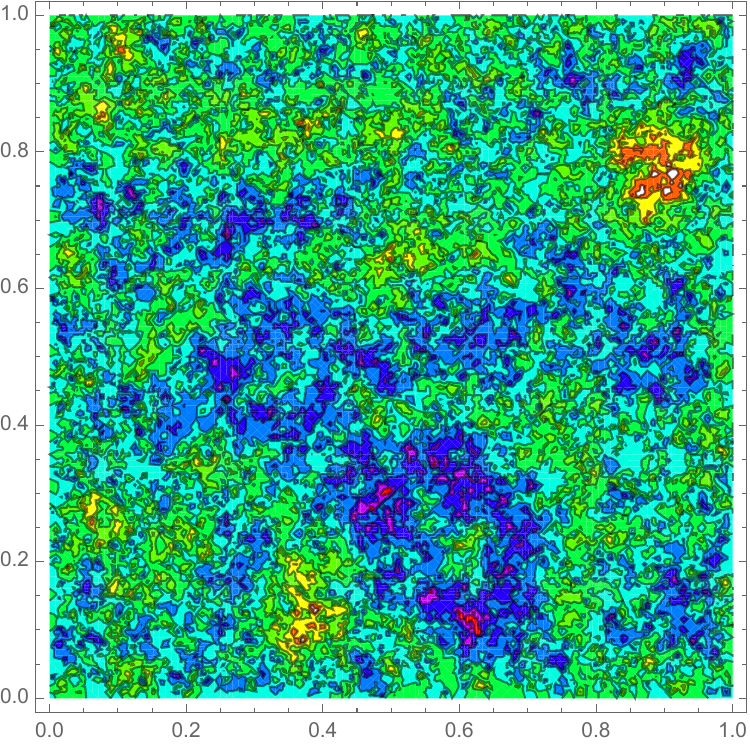}
  \caption{Contour plot}
  \label{subfig:GFF contour}
\end{subfigure}
\begin{subfigure}{.1\textwidth}
\centering
  \includegraphics[width=.5\linewidth]{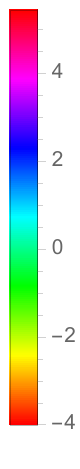}
\end{subfigure}
\caption{A simulation of the GFF on $[0,1]^2$. Two different graphic views of the same realization of an approximated GFF on $[0,1]^2$ with Dirichlet zero boundary conditions. The approximation used is the cut-off of the generalized Karhunen-Lo\`eve expansion at $(N,N)=(200,200)$ (see discussion after \Cref{eq:series expansion for log Gaussian fields}).}
\label{fig:Simulation of the GFF}
\end{figure}

The study of Gaussian multiplicative chaos (GMC) was started by Kahane in 1985 \cite{Ka85a} motivated by statistical models of turbulence. We refer to the review article \cite{RhVa14a} for more details and references about the early development of the theory. The theory is about fractal random measures, which are constructed from log-correlated Gaussian random fields $G$. Formally these are given by $e^{\gamma G(x)}dx$. There has been strong research interest in this area during the last two decades. Especially, the paper \cite{DuSh11a} raised interest in the subject again.  These fractal measures have some peculiar geometric properties and have also helped to study the properties of the underlying fields. We mention the property that in the sub-critical case the measures give full mass to the set of the thick points of the underlying field $G$. This result goes back to Kahane's original work\cite{Ka85a}. The $\gamma$-thick points $x$ in the domain of definition of the field satisfy
\begin{align*}
\lim_{\eps\to 0}\frac{G_\eps(x)}{\log(\eps^{-1})}=\gamma,
\end{align*}
where the $G_\eps$ is a suitable approximation of the field $G$ and $\gamma\in [0,\sqrt{2d})$. The variance of $G_\eps$ is of order $\log(\eps^{-1})$. Thus, formally the field $G$ takes exceptionally large values on the set of thick points.
 
Since the multiplicative chaos measure is constructed from a log-correlated field, it is natural to ask: What information about this underlying field does the measure carry? It turns out that it carries all the necessary information to reconstruct the whole field. Our main results give this reconstruction locally on a slightly smaller domain. Some earlier results already exist. The first reconstruction result was given in \cite{BeShSu23a}. Another reconstruction is given in \cite{ArJu21a}, where the authors consider imaginary multiplicative chaos. In this paper we are only interested in the real multiplicative chaos. The setup in \cite{BeShSu23a} is equivalent to ours, except that they consider only the dimension $d=2$ and the sub-critical regime $\gamma\in[0,\sqrt{2d})$. Although most concrete examples of log-correlated fields are in dimensions $d=1$ or $d=2$, many of the properties of log-correlated fields are independent of the dimension $d$. Thus, it would be desirable to have a reconstruction argument that is also independent of the dimension $d$.  This is one of the new features of the present paper. Another novelty is that our results apply also to the critical case $\gamma=\sqrt{2d}$. Our proofs rely on the scaling properties of the $\star$-scale invariant fields, which is a special class of log-correlated Gaussian fields, whereas the proofs in \cite{BeShSu23a} rely on the conformal invariance and Markov property of the $2d$-GFF. We use the coupling of the log-correlated field $X$ to a $\star$-scale invariant field $S$ to translate the argument of the proof to a wider class of fields. The log-correlated Gaussian fields $G$ are defined below and the $\star$-scale invariant fields $S$ are defined in \Cref{sec:star scale invariant fields}. The most general class of $\log$-correlated fields $X$ we consider are mildly non-Gaussian, and we define this concept in the next subsection before we state our main results. 

Given a bounded domain $D\in\R^d$ and a covariance kernel $C_G\colon D\times  D \to \R$ defined by
\begin{align}
\label{eq:definition of the covariance kernel of a log correlated Gaussian field}
C_G(x,y):=\log(|x-y|^{-1})+g_G(x,y),
\end{align} 
where $g$ is taken to be continuous at least on the interior of $D$, we call a random Schwartz distribution $G$ a log-correlated \emph{Gaussian} field if for all $f\in \Dcal(D)=C_c^\infty(D)$ the random variable $\innerp{G}{f}$ is Gaussian with variance 
\begin{align*}
\Var{(\innerp{G}{f})}=\int_{D\times D}f(x)f(y)C_G(x,y)\dx\dy:=\Kcal_G(f,f).
\end{align*}
Thus, the field is a random element in the space of ordinary distributions $\Dcal'(D)$. Above $\innerp{\cdot}{\cdot}$ is the canonical dual pairing. 
\noindent
We could and also will when necessary consider the fields $G$ in the whole $\R^d$ by setting the field to zero on ${\R^d\setminus D}$ and similarly for the covariance kernel. In this generality, probably the simplest way to prove the existence of such a field would be to invoke the Bochner-Minlos theorem with the functional $S(f)=\exp(-\frac{1}{2}\Kcal_G(f,f))$. This proves that there exists a probability measure on $\Dcal'(D)$, which would then be identified as the law of the desired field.

A more concrete way to construct the field $G$ is via a generalized Karhunen-Lo\`eve 
expansion
\begin{align}
\label{eq:series expansion for log Gaussian fields}
G(x):=\sum_{n=1}^\infty A_n \sqrt{\lambda_n}\varphi_n(x), \quad x\in D,
\end{align}
where $\{\lambda_n\}_{n\in\N}$ is a decreasing sequence of positive real numbers, $\{\varphi_n\}_{n\in\N}$ is a certain orthonormal set of vectors in $L^2(D)$ and $\{A_n\}$ is an i.i.d sequence of standard Gaussian random variables. In the case of log-correlated fields, the above series is formal and only converges in the sense of Schwartz distributions. The sequences $\{\lambda_n\}_{n\in\N}$ and $\{\varphi_n\}_{n\in\N}$ are the non-zero eigenvalues and corresponding eigenvectors of the Hilbert-Schmidt operator with integral kernel $C_G(x,y)$. This is a more convenient approach for proving that the field $G$ exists also in much nicer distribution spaces than $\Dcal'$, for example in some Sobolev spaces. Under the assumptions that $D$ is a simply connected domain and $g\in L^1(D\times D)\cap C(D\times D)$ is bounded from above and the field $G$ is continued as zero outside $D$, the above formal series converges in $H^{-\eps}(\R^d)$ for every $\eps>0$ and that the limit is a non-trivial $H^{-\eps}(\R^d)$-valued random variable \cite[Proposition 2.3]{JuSaWe20a}. Note that in this paper we do not need to assume simply connectedness since we do not reconstruct the field in the Sobolev space $H^{-\eps}(\R^d)$, but instead as an ordinary Schwartz distribution. For example, for the $2d$-GFF with zero Dirichlet boundary conditions on $[0,1]^2$, the functions $\varphi_n$ are the eigenfunctions of the Laplacian with the same boundary conditions $\varphi_{k,l}(x,y)=2\sin(k\pi x)\sin(l\pi y)$ with $k,l=1,2,\dots$ and the corresponding eigenvalues are the inverses $\lambda_{k,l}^{-1}$ of the eigenvalues of the Laplacian $\lambda_{k,l}=\half\pi^2(k^2+l^2)$.

We also need the decomposition of Gaussian fields \cite[Theorem A]{JuSaWe19a}. By this result, given a log-correlated Gaussian field $G$ we can find a $\star$-scale invariant field $S$ and a H\"older-continuous field $H$ defined on the same probability space such that
\begin{align*}
G=S+H.
\end{align*}
Note that the fields above are not necessarily independent. This result is valid for two more general centred Gaussian fields $G_1$ and $G_2$ than the log-correlated ones $G$ and $S$ here, and assumes that the difference of the covariances belongs to the space $H_{loc}^{d+\eps}(D\times D)$ for all $\eps>0$. Here the local Sobolev-space is defined so that a Schwartz distribution $\lambda$ belongs to $H_{loc}^s(D)$ if for every $\varphi\in C_c^\infty(D)$ we have $\varphi\lambda\in H^s(\R^d)$. In our case the difference of the covariances reduces to the difference $g_G-g_S$. However, for the $\star$-scale invariant part $S$ the function $g_S$ is usually more regular. Therefore, from now on in this paper we assume that $g_G\in H_{loc}^{d+\eps}(D\times D)$.

As mentioned already, the fields considered here are not point-wise well-defined random functions, but rather random Schwartz distributions. Thus, also their exponentials are ill-defined. The GMC is usually constructed as a measure by a limiting argument from the approximations of $G$. More precisely, in the sub-critical case $\gamma<\sqrt{2d}$ given a suitable approximation $G_\eps$ of $G$ and a Radon measure $\mu$ supported on $D$ we consider  the limit
\begin{align}
\nu_{\gamma,G,\mu}(\dx):=\lim_{\eps\to 0}\nu_{\gamma,\eps,G,\mu}:=\lim_{\eps\to 0} \frac{e^{\gamma G_\eps(x)}}{\E[e^{\gamma G_\eps(x)}]}\mu(\dx)=\lim_{\eps\to 0} e^{\gamma G_\eps(x)-\frac{\gamma^2}{2}\E[(G_\eps(x))^2]}\mu(\dx),
\end{align}
in the space of Radon measures supported in $D$ in suitable stochastic sense (see \Cref{subsec:Gaussian multiplicative chaos} for more details). With appropriate conditions, the limit defines a random Radon measure. We call the Radon measure $\mu$ a reference measure. In our results, we will take $\mu$ to be the Lebesgue measure and only shortly discuss after the proofs what parts of the argument need to be changed for a more general reference measure. We will also drop the subscript corresponding to the reference measure from the notation. In the approximations it is also convenient to drop the subscript for the field, when no confusion is possible. In the critical case $\gamma=\gamma_c:=\sqrt{2d}$, the approximations $\nu_{\gamma,\eps}$ need to be multiplied by a renormalization factor $C_\eps$ in order to obtain a non-trivial limit. In this paper, we use the deterministic renormalization $C_\eps=\sqrt{\log(\eps^{-1})}$, which is called the Seneta-Heyde renormalization. Thus, for us 
\begin{align*}
\nu_{\gamma_c,G}(\dx):=\lim_{\eps\to 0} \sqrt{\log(\eps^{-1})}\nu_{\gamma_c,\eps}(\dx).
\end{align*} 
The results about GMC measures we need will be reviewed in \Cref{subsec:Gaussian multiplicative chaos}. For more detailed expositions we refer the reader to \cite{Ka85a,RoVa10a,RhVa14a,Sh16a,Be17a} for the sub-critical case and to \cite{DuRhSh14b,DuRhSh14a,JuSa17a,JuSaWe19a,Po20a, La24a} for the critical case.

In many interesting models the field is not a log-correlated Gaussian field, but perturbation of such a field. In this paper, we consider the case that the perturbation is a Hölder-continuous field and call the field $X$ then a mildly non-Gaussian log-correlated field. We will give the necessary precise definitions in the next subsection before stating our main result covering also this case. 

\subsection{Main results}
\label{subsec:Main results}
In this section, we state our main results. We will take $D\subset\R^d$ to be a bounded domain. \Cref{thm:field from the measure general log correlated Gaussian case} describes the reconstruction of the underlying field from the corresponding GMC measure in the sub-critical and critical regime $\gamma\in[0,\sqrt{2d}]$ . Thus, the field is determined by the GMC measure. Furthermore, this reconstruction also gives that the field is measurable with respect to the multiplicative chaos, that is $G$ is measurable with respect to $\sigma(\mu_{\gamma,G})$. 
\begin{maintheorem}
\label{thm:field from the measure general log correlated Gaussian case}
Let $G$ be log-correlated Gaussian field with the decomposition $G=S+H$, where $S$ is $\star$-scale invariant field and $H$ is a.s. Hölder-continuous Gaussian field (as mentioned above, this means we assume that the function $g_G$ in \ref{eq: def function g} belongs to $H_{loc}^{d+\delta}(D\times D)$ for all $\delta>0$) and $\nu_{\gamma,G}$ be a GMC measure constructed from $G$. Let $\eta\in C_c(\R^d)$ with $\eta\geq 0$, $\int\eta dx=1$ and $\supp(\eta)\subset B_1(0)$, and $\gamma\in [0,\sqrt{2d}]$. Then there exists $\eps_{0}\in (0,1)$ such that for $0<\eps<\eps_0$ there exists a deterministic function $F_{\gamma,\eps,\eta}(x)$ such that for any $\psi \in C_c^\infty(D)$ with $\dist(\supp(\psi),\partial D)>\eps_0$ we have
\begin{align*}
\int_{D}\psi(x)\rbr{\frac{1}{\gamma}\log\abr{\int_{\R^d}\eta_{\eps}(y-x)\nu_{\gamma,G}(\dy )}-F_{\gamma,\eps,\eta}(x)}dx\overset{\eps\to 0}{\longrightarrow} \innerp{G}{\psi}
\end{align*}
in probability. Above $\eta_\eps(\cdot):=\eps^{-d}\eta(\cdot /\eps)$.

The counter term function $F_{\gamma,\eps,\eta}$ will be different for different fields $G$, but it will only depend on their distributions, not specific realizations. 
 
\end{maintheorem}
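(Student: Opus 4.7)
The plan is to reduce to the case of a $\star$-scale invariant Gaussian field $S$, where the exact self-similarity of $\nu_{\gamma,S}$ can be exploited, and then lift back to $G=S+H$ using the Hölder regularity of $H$. The first step is a Cameron--Martin/Girsanov identity: using joint Gaussianity of $(S,H)$ and the fact that the cross covariance $c_{SH}(y):=\lim_{\eps\to 0}\E[S_\eps(y)H(y)]$ is continuous (which follows from the $H^{d+\delta}_{\mathrm{loc}}$-assumption on $g_G$), one obtains
\begin{align*}
\nu_{\gamma,G}(dy) = \exp\!\Bigl(\gamma H(y)-\tfrac{\gamma^2}{2}\E[H(y)^2]-\gamma^2 c_{SH}(y)\Bigr)\,\nu_{\gamma,S}(dy).
\end{align*}
Hölder continuity of $H$ and $c_{SH}$ lets me replace $H(y)$ by $H(x)$ for $y$ in the $\eps$-support of $\eta_\eps(\cdot-x)$, up to a multiplicative error of size $\bigO(\eps^\alpha)$. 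The factor $e^{\gamma H(x)}$ then pulls out of the $\eta_\eps$-integral, and after taking $\tfrac{1}{\gamma}\log$ and integrating against $\psi$ the $H$-part contributes exactly $\innerp{H}{\psi}$. This reduces the problem to the analogous reconstruction for $S$.

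For the $\star$-scale invariant field $S$ I use the canonical decomposition $S=S_\eps+S^\eps$, in which $S_\eps$ is the smooth $\star$-scale mollification and $S^\eps$ is independent of $S_\eps$ with a law identical to $S$ up to a deterministic rescaling by $\eps$. This factors the chaos as
\begin{align*}
\int\eta_\eps(y-x)\,\nu_{\gamma,S}(dy) = \int \eta_\eps(y-x)\,e^{\gamma S_\eps(y)-\frac{\gamma^2}{2}\E[S_\eps(y)^2]}\,\nu_{\gamma,S^\eps}(dy).
\end{align*}
Since $S_\eps$ is smooth, replacing $S_\eps(y)$ by $S_\eps(x)$ in the exponential costs an error $\bigO(\eps)$ in probability, giving
\begin{align*}
\int\eta_\eps(y-x)\,\nu_{\gamma,S}(dy)\approx e^{\gamma S_\eps(x)-\frac{\gamma^2}{2}\E[S_\eps(x)^2]}\,M_\eps(x),\qquad M_\eps(x):=\int\eta_\eps(y-x)\,\nu_{\gamma,S^\eps}(dy).
\end{align*}
By $\star$-scale invariance the law of $M_\eps(x)$ is independent of $\eps$ once a deterministic prefactor is stripped off (accounting for the $\eps^d$ of the chaos mass and, when $\gamma=\sqrt{2d}$, the Seneta--Heyde $\sqrt{\log\eps^{-1}}$). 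Setting
\begin{align*}
F_{\gamma,\eps,\eta}(x):=-\tfrac{\gamma}{2}\E[S_\eps(x)^2]+\tfrac{1}{\gamma}\E\log M_\eps(x)+(\text{deterministic }H\text{-contributions})
\end{align*}
then yields the decomposition
\begin{align*}
\tfrac{1}{\gamma}\log\!\int\eta_\eps(y-x)\,\nu_{\gamma,G}(dy)-F_{\gamma,\eps,\eta}(x) = S_\eps(x)+H(x)+R_\eps(x),
\end{align*}
with $R_\eps(x)=\tfrac{1}{\gamma}\bigl(\log M_\eps(x)-\E\log M_\eps(x)\bigr)+o_\eps(1)$.

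It remains to test the identity against $\psi$. The main term gives $\int\psi(x)(S_\eps(x)+H(x))\,dx\to\innerp{G}{\psi}$ in $L^2$ by standard Karhunen--Loève-type arguments for log-correlated fields. For the remainder, Fubini yields
\begin{align*}
\E\Bigl|\int\psi(x)R_\eps(x)\,dx\Bigr|^2 \le \iint|\psi(x)\psi(x')|\,|\Cov(\log M_\eps(x),\log M_\eps(x'))|\,dx\,dx',
\end{align*}
and for $|x-x'|\gtrsim\eps$ the $\eta_\eps$-averaged fine-scale measures decorrelate by the spatial independence of $S^\eps$ at such separations, so the covariance kernel is essentially supported on an $\eps$-tube about the diagonal and the double integral is $\bigO(\eps^d)\to 0$. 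The main obstacle is the critical case $\gamma=\sqrt{2d}$: the Seneta--Heyde-normalised measure has heavy tails and its derivative-martingale structure makes $\log M_\eps(x)$ only marginally integrable, so both the factorisation step and the covariance decay require sharper tail estimates than the sub-critical ones, where all positive moments of the chaos mass on a ball below the freezing threshold are available.
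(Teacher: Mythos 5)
The reduction to the $\star$-scale invariant case via the Girsanov/Cameron--Martin identity is correct and matches the paper's Lemma~\ref{lemma:the absolute conituity of muX and muY} (your $c_{SH}$-formula is equivalent to the paper's $g_G-g_S$ once you expand the covariance of $G=S+H$). Likewise the broad outline — factor the chaos across scale $\eps$, subtract the mean, and run an $L^2$/covariance argument — is the paper's strategy for Proposition~\ref{thm:field from the measure star scale invariant case}. However, there are two genuine gaps in the middle steps.

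First, the claim that ``replacing $S_\eps(y)$ by $S_\eps(x)$ in the exponential costs an error $\bigO(\eps)$ in probability'' is false. Although $S_\eps$ is a.s.\ smooth for each fixed $\eps$, its gradient blows up like $\eps^{-1}$: the fluctuation $S_\eps(x+\eps u)-S_\eps(x)$ has \emph{variance of order one, uniformly in $\eps$}, not $\bigO(\eps)$. Indeed, this quantity is precisely the process $Y_{\eps,x}(u)$ of \Cref{eq:Def Y}, and \Cref{prop: convergence of Yx and Yx'} shows it converges to a \emph{nontrivial} limiting Gaussian process. The paper therefore cannot (and does not) pull $e^{\gamma S_\eps(x)}$ out for free; it keeps the factor $e^{\gamma Y_{\eps,x}(u)}$ under the integral, defining $A_\eps(x):=\gamma^{-1}\log\!\big(\int\eta(u)\,e^{\gamma Y_{\eps,x}(u)}\nu_\gamma^{\eps,x}(\du)\big)$, and then has to bound its moments via Dudley and Borel--TIS (\Cref{lemma: even moments of Y}). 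Your $M_\eps(x)$ as written omits this factor, so the identity it leads to is missing a persistent order-one random contribution.

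Second, once the $Y_{\eps,x}$ factor is reinstated, the covariance kernel $\Cov(A_\eps(x),A_\eps(x'))$ is \emph{not} supported in an $\eps$-tube around the diagonal: while $\nu_\gamma^{\eps,x}$ and $\nu_\gamma^{\eps,x'}$ restricted to $B_1(0)$ are exactly independent for $|x-x'|\geq 3\eps$ (\Cref{prop: independence of nu eps x and nu eps x'}), the processes $Y_{\eps,x}$ and $Y_{\eps,x'}$ are correlated at that separation and only decorrelate in the limit $\eps\to 0$ at fixed $x\neq x'$ (\Cref{prop: convergence of Yx and Yx'}). So the quantitative $\bigO(\eps^d)$ estimate for the double integral cannot be obtained directly by tube-support; the paper instead shows $\Cov(A_\eps(x),A_\eps(x'))\to 0$ pointwise for $x\neq x'$, proves a uniform bound via fourth-moment estimates, and concludes by dominated convergence. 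The critical case $\gamma=\sqrt{2d}$ is then handled uniformly because the log of the chaos mass only requires small positive and negative moments, which exist below the threshold $q_c(\gamma_c)=1$; your flag that sharper tail estimates are needed is not quite the issue.
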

\noindent
The choice of the support of $\psi$ guarantees that there always exists small enough $\eps>0$ such that $\supp(\eta_\eps(\cdot-x))\subset D$ for all $x\in\supp(\psi)$ and thus the integral inside the logarithm in the above theorem is well-defined if we set $\nu_{\gamma,\cdot}=0$ in $\R^d\setminus D$. Note that as will be explained in \Cref{subsec:Gaussian multiplicative chaos} the approximations of critical GMC for general log-correlated Gaussian field $G$ converge in compact subsets of $D$ \cite{JuSaWe19a}. However, our assumptions guarantee that we are always within some compact set in the reconstruction.

\Cref{thm:field from the measure for mildly non-Gaussian case} is a similar result for the class of mildly non-Gaussian log-correlated random fields. In this case we only consider the sub-critical regime $\gamma<\sqrt{2d}$ for simplicity. However, for \Cref{def: non gaussian multiplicative chaos 1} $\gamma=\gamma_c$ makes no difference and in the example where we discuss using \ref{def: non gaussian multiplicative chaos 2} it has been shown that the limit in \Cref{def: non gaussian multiplicative chaos 2} also exists in the critical case.  We need to specify what we mean by mild non-Gaussianity.  
\begin{definition}[Mildly non-Gaussian log-correlated random field]
We say that a generalized random field (a random Schwartz distribution) $X$ is a mildly non-Gaussian log-correlated random field if there exist a log-correlated Gaussian field $G$ and an a.s Hölder-continuous field $H$ living on the same probability space as $X$ and $G$ such that
\begin{align*}
X=G+H.
\end{align*}
The processes above need not be independent. 
\end{definition}
\begin{remark}
In the mildly non-Gaussian case the term log-correlated random field is not completely analogous with the Gaussian case since although there will be a logarithmic part in the covariance, there is no guarantee that the rest of the covariance is as nice as the function $g$ in the Gaussian case. 
\end{remark}
\begin{remark}
As stated above, by the results in \cite{JuSaWe19a} there exists a decomposition $G=S+H'$ under some extra assumptions, where $S$ is a $\star$-scale invariant field, and $H'$ is an almost surely Hölder-continuous Gaussian process living on the same probability space as $S$. Thus, under the assumptions mentioned we could take $G$ to be $\star$-scale invariant field in the above definition.
\end{remark}
\noindent
Even though some results have been obtained \cite{Ju20a}, the canonical theory of non-Gaussian Multiplicative chaos does not exist, so we need to discuss how we will define the multiplicative chaos in the mildly non-Gaussian case. We will give two options here. Our main results will be stated according to the first option and discuss after the proofs, what further assumptions would be needed if we used the second option. The simplest option would be to define the following.
\begin{definition}[Multiplicative chaos for a mildly non-Gaussian field $X$, Option 1]
\label{def: non gaussian multiplicative chaos 1}
For a mildly non-Gaussian field $X=G+H$ we set
\begin{align*}
\nu_{\gamma,X}(\dx ):=e^{\gamma H(x)}\nu_{\gamma,G}(\dx ).
\end{align*} 
\end{definition}
\noindent
The downside of this definition is that now the expectation of this random measure is not the reference measure, that is, the Lebesgue measure. Neither is this a natural generalization of the GMC theory. The second option would fix the above, but would require further assumption in our results. 
\begin{definition}[Multiplicative chaos for a mildly non-Gaussian field $X$, Option 2]
\label{def: non gaussian multiplicative chaos 2}
For the mildly non-Gaussian field $X$ we set
\begin{align*}
\nu_{\gamma,X}(\dx ):=\lim_{\delta\to 0}\frac{e^{\gamma X_\delta(x)}}{\E[e^{\gamma X_\delta(x)}]}\dx 
\end{align*}
provided such a limit exists in some suitable topology in the space of Radon measures and almost surely (possibly along a subsequence) for some approximation $X_\delta$ of $X$.
\end{definition}
\begin{remark}
Since there is no suitable canonical theory of non-Gaussian multiplicative chaos, and we have not assumed anything more than the decomposition $X=S+H$, the existence of the limit in the above definition needs to be checked case by case. 
\end{remark}
\noindent
In \Cref{def: non gaussian multiplicative chaos 2}, the weak topology would be a natural choice since the majority of the theory in the Gaussian case has been done in this topology. For our needs in this paper, a little less than the weak topology is enough since we always integrate against a compactly supported test functions. Thus, we could use the vague topology of convergence of measures.
Now we can state the result for the mildly non-Gaussian case.
\begin{maintheorem}
\label{thm:field from the measure for mildly non-Gaussian case}
Let $X=G+H$ be a mildly non-Gaussian log-correlated field and the multiplicative chaos $\nu_{\gamma,X}$ as in \Cref{def: non gaussian multiplicative chaos 1}. Assume that $G$ satisfies the assumptions in \ref{thm:field from the measure general log correlated Gaussian case}, in particular $g_G\in H_{loc}^{d+\delta}(D\times D)$ for all $\delta>0$. Also let $\eta\in C_c(\R^d)$ with $\eta\geq 0$, $\int\eta dx=1$ and $\supp(\eta)\subset B_1(0)$, and $\gamma\in [0,\sqrt{2d})$. Then there exists $\eps_{0}\in (0,1)$ such that for $0<\eps<\eps_0$ there exists a deterministic function $F_{\gamma,\eps,\eta}(x)$ such that for any $\psi \in C_c^\infty(D)$ with $\dist(\supp(\psi),\partial D)>\eps_0$ we have
\begin{align*}
\int_D \psi(x)\rbr{\frac{1}{\gamma}\log\abr{\int_{\R^d}\eta_\eps(y-x)\nu_{\gamma,X}(\dy )}-F_{\gamma,\eps,\eta}(x)}dx\overset{\eps\to 0}{\longrightarrow} \innerp{X}{\psi}
\end{align*}
in probability. Above $\eta_\eps(x):=\eps^{-d}\eta(x/\eps)$.

The counter term function $F_{\gamma,\eps,\eta}$ will be different for different fields $X$, but it will only depend on their distributions not specific realizations. 
\end{maintheorem}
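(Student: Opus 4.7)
The plan is to reduce \Cref{thm:field from the measure for mildly non-Gaussian case} directly to \Cref{thm:field from the measure general log correlated Gaussian case} by exploiting the pointwise multiplicative identity built into \Cref{def: non gaussian multiplicative chaos 1}. Unpacking the definition and factoring out $e^{\gamma H(x)}$ gives
\begin{align*}
\int_{\R^d}\eta_\eps(y-x)\nu_{\gamma,X}(\dy)=e^{\gamma H(x)}\int_{\R^d}\eta_\eps(y-x)e^{\gamma(H(y)-H(x))}\nu_{\gamma,G}(\dy).
\end{align*}
After taking logarithms and dividing by $\gamma$, the factor $e^{\gamma H(x)}$ will contribute the $H(x)$ piece needed to lift the Gaussian reconstruction to $X=G+H$, while the remaining integral should differ from $\int\eta_\eps(y-x)\nu_{\gamma,G}(\dy)$ only by a multiplicative $1+O(\eps^\alpha)$ error.

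To justify the last claim I would use the a.s.\ Hölder continuity of $H$. Fix a compact set $K\subset D$ containing an $\eps_0$-neighborhood of $\supp(\psi)$. On an event of full probability there is a Hölder exponent $\alpha\in(0,1)$ and a random constant $C(\omega)<\infty$ with $|H(y)-H(x)|\leq C(\omega)|y-x|^\alpha$ for all $x,y\in K$. Since $\supp(\eta_\eps(\cdot-x))\subset B_\eps(x)\subset K$ whenever $x\in\supp(\psi)$ and $\eps<\eps_0$, this yields
\begin{align*}
\left|\log\int_{\R^d}\eta_\eps(y-x)\nu_{\gamma,X}(\dy)-\gamma H(x)-\log\int_{\R^d}\eta_\eps(y-x)\nu_{\gamma,G}(\dy)\right|\leq \gamma C(\omega)\eps^\alpha,
\end{align*}
uniformly in $x\in\supp(\psi)$. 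The logarithms are almost surely finite because the subcritical GMC charges every open ball positively.

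Choosing the counter term $F_{\gamma,\eps,\eta}$ for $X$ to equal the one produced by \Cref{thm:field from the measure general log correlated Gaussian case} for $G$ (which depends only on the law of $G$, hence of $X$), dividing by $\gamma$, and integrating against $\psi$ now splits the left-hand side of the theorem into three pieces:
\begin{align*}
\int_D\psi(x)H(x)\dx+\int_D\psi(x)\rbr{\frac{1}{\gamma}\log\int_{\R^d}\eta_\eps(y-x)\nu_{\gamma,G}(\dy)-F_{\gamma,\eps,\eta}(x)}\dx+O(\eps^\alpha).
\end{align*}
The first piece is $\innerp{H}{\psi}$; the second converges in probability to $\innerp{G}{\psi}$ by \Cref{thm:field from the measure general log correlated Gaussian case}; the third tends to zero almost surely. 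The sum equals $\innerp{G+H}{\psi}=\innerp{X}{\psi}$, which is the claim.

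The main obstacle I expect is cleanly implementing the Hölder step so that the error is genuinely $O(\eps^\alpha)$ uniformly in $x\in\supp(\psi)$ on a single full-measure event, despite the randomness of the Hölder constant $C(\omega)$. One has to handle $\log(1+R_\eps(x))$, where $R_\eps$ is obtained by exponentiating a random bound; the clean way is to condition on $\{C(\omega)\leq M\}$, get uniform deterministic control there, then let $M\to\infty$ and pass from almost-sure to in-probability convergence. The only other minor technicality is the almost-sure positivity of $\int\eta_\eps(y-x)\nu_{\gamma,G}(\dy)$ for $x\in\supp(\psi)$, so that the logarithm is well defined. Beyond these bookkeeping points the argument is simply a transport of \Cref{thm:field from the measure general log correlated Gaussian case} through the identity $\nu_{\gamma,X}=e^{\gamma H}\nu_{\gamma,G}$.
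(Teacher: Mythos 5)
Your proposal is correct and takes essentially the same approach as the paper: both factor out $e^{\gamma H(x)}$ using the identity $\nu_{\gamma,X}=e^{\gamma H}\nu_{\gamma,G}$, reuse the counter term from \Cref{thm:field from the measure general log correlated Gaussian case}, and show the remainder term $R_\eps(x)=\log\rbr{\int\eta_\eps(y-x)e^{\gamma(H(y)-H(x))}\nu_{\gamma,G}(\dy)/\int\eta_\eps(y-x)\nu_{\gamma,G}(\dy)}$ vanishes using continuity of $H$. Two small remarks: the paper runs this remainder estimate via uniform continuity on compacts (sufficient, as you only need $\sup_{x\in\supp(\psi),\,y\in B_\eps(x)}|H(y)-H(x)|\to 0$ a.s.), whereas you use the Hölder exponent for a quantitative $O(\eps^\alpha)$ rate, which is fine but not needed; and the conditioning on $\cbr{C(\omega)\leq M}$ that you flag as ``the clean way'' is actually unnecessary — since $|R_\eps(x)|\leq\gamma C(\omega)\eps^\alpha$ holds uniformly in $x$ on a full-measure event with $C(\omega)<\infty$, the remainder integral converges to zero almost surely, which directly implies convergence in probability without any truncation of the random Hölder constant.
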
 

The results in \cite{BeShSu23a} cover \Cref{thm:field from the measure general log correlated Gaussian case,thm:field from the measure for mildly non-Gaussian case} in the case $d=2$ and $\gamma<\sqrt{2d}$. Their proofs rely heavily on the coupling to the Gaussian free field its Markov property and conformal invariance. Parallel results also exist for $d\geq 2$ in the frame work of imaginary GMC, that is, taking $\gamma=i \beta$ for $\beta\in [0,\sqrt{d})$ \cite{ArJu21a}. In this case, the GMC is not a measure any more, but a complex-valued Schwartz distribution, see \cite{JuSaWe20a} for more details. In the case of imaginary GMC a direct local reconstruction is not possible. This is essentially, because of the periodicity of the real map $x\mapsto e^{ix}$. Instead, in \cite{ArJu21a} the authors first reconstruct the gradient of the underlying field, which then determine the field given some global conditions like boundary conditions on the domain.

\subsection*{Structure of the rest of the paper and some notation}
The rest of the paper is structured in the following way.
\Cref{sec:Preliminaries} will review some basic results about $\star$-scale invariant fields, continuous Gaussian processes and GMC measures and derive the tools needed to prove the main theorems. \Cref{sec:Proofs of the main results} is dedicated for the proofs of the main theorems and related discussions. We will first state a similar  result for the $\star$-scale invariant fields (\Cref{thm:field from the measure star scale invariant case}) and then prove \Cref{thm:field from the measure general log correlated Gaussian case,thm:field from the measure for mildly non-Gaussian case} assuming this result. After the proofs we will discuss the additional assumption needed if we use \Cref{def: non gaussian multiplicative chaos 2} in \Cref{thm:field from the measure for mildly non-Gaussian case}. At the end of \Cref{sec:Proofs of the main results} we will prove \Cref{thm:field from the measure star scale invariant case} and discuss the changes needed in the argument for a general reference measure.  \Cref{sec:Applications} will provide two examples of mildly non-Gaussian fields that fit into our framework. One of the examples, namely the sine-Gordon field comes from quantum/statistical field theory. The other example is the logarithm of the absolute value of the randomized Riemann zeta function. We will also give two applications of our methods. Both of these have the property that a small set of parameters will determine the object of interest for all other relevant values of the parameter. More precisely, we show with a concrete construction that one value say $\gamma_0$ determines the GMC measure for all other appropriate values of the parameter $\gamma$ and that the thick points determine in certain sense the whole log-correlated Gaussian field $G$.

In this paper (except in \Cref{sec:Examples}), the following notation will always be applied. $G$ denotes a Gaussian field, $S$ denotes a $\star$-scale invariant field and $H$ denotes a (not necessarily Gaussian) Hölder-continuous random field, that is, a process with a.s. surely Hölder-continuous realizations. $X$ will denote a generic random variable or a mildly non-Gaussian log-correlated field and it will be clear from context, which it is. In  \Cref{sec:Examples}, which gives the examples of mildly non-Gaussian fields we will use notation that is more in line with the papers that prove the coupling results of the given fields. $C$ (with possible subscripts to highlight dependence on something) will denote a positive constant and we will allow it to change from (in)equality to (in)equality in the estimates.  

\subsection*{Acknowledments}

The author was supported by the Emil Aaltonen Foundation,  Academy of Finland through the Grant 348452 and the Academy of Finland CoE FiRST. The author is also thankful for Christian Webb and Eero Saksman for helpful discussions and comments on the preliminary versions of this paper. 

\section{Preliminaries}
\label{sec:Preliminaries}
In this section, we will review and develop the tools that we need in the proofs of the main theorems. 
\subsection{$\star$-scale invariant fields}
\label{sec:star scale invariant fields}
The class of $\star$-scale invariant fields $S$ is a special class of log-correlated Gaussian fields. First, let $k\colon \R^d \to\R$ be a $\alpha$-H\"older-continuous with $0<\alpha\leq 1$, rotationally symmetric function with compact support in $B_1(0)$ and $k(0)=1$, and for which $(x,y)\to k(x-y)$ is a covariance on $\R^d\times \R^d$. We call such a function $k$ a seed covariance function following the terminology in \cite{JuSaWe19a}. To see that such a function $k$ exists, we can first take  $k:=u*u$ with rotationally symmetric, H\"older-continuous function $u$ with compact support and satisfying $u(x)\geq 0$ for all $x\in\R^d$. Checking the covariance property and H\"older-continuity is straight forward and the rest of the properties we demand can be achieved by scaling and normalization. Then we have the following definition
\begin{definition}[$\star$-scale invariant Gaussian field]
Given a function $k$ as described above we define a $\star$-scale invariant Gaussian field $S$ to be a generalized Gaussian field with the covariance kernel
\begin{align}
\label{eq:Covariance integral for star scale invariant fields}
C_S(x,y):=\int_1^\infty k(t[x-y])\frac{\dt }{t}=\int_1^\infty k_1(t|x-y|)\frac{\dt }{t},
\end{align}
where $k_1\colon \R\to\R$ with $k(x):=k_1(|x|)$.
\end{definition}
\begin{remark}
Note that different parametrizations exist, and different regularity assumptions on $k$ may be used, see for example \cite{DuFa20a,DuRhSh14b,DuRhSh14a,JuSaWe19a}. These $\star$-scale invariant fields also fall into the original framework of Gaussian multiplicative chaos (GMC) created by Kahane \cite{Ka85a}. In other words, the covariance kernel $C_S$ is   of sigma-positive type. 
The notion of $\star$-scale invariance is originally related to random measures that satisfy a certain $\star$-equation, which is a continuous analogue of a similar discrete equation satisfied by the so called multiplicative cascades developed by Mandlebrot in \cite{Ma74a,Ma74b}. Roughly speaking the measures satisfying the continuous $\star$-equation are the GMC measures that can be constructed from the above $\star$-scale invariant fields up to an independent multiplicative random variable, see \cite{AlRhVa13a,RhSoVa14a} and references therein.
\end{remark}

$C_S$ is indeed a covariance, because $k_1(|\cdot-\cdot|)$ is, and sums and limits of covariances are covariances. We could argue directly by for example Minlos' theorem, that such a field exists, but we will soon see that $S$ is a log-correlated Gaussian field. Indeed, the covariance integral in \Cref{eq:Covariance integral for star scale invariant fields} can be rewritten as
\begin{align*}
\int_1^\infty k(s[x-y])\frac{\ds}{s}=&
\log(|x-y|^{-1})+\int_{|x-y|}^1(k_1(t)-1)\frac{\dt }{t}
\end{align*}
since $\supp(k_1)\subset [0,1)$. Thus, we have
\begin{align*}
C_S(x,y)=\log(|x-y|^{-1})+g_S(x,y),
\end{align*}
where 
\begin{align*}
g_S(x,y):=\int_{|x-y|}^1(k_1(t)-1)\frac{\dt }{t}.
\end{align*}
$g_S$ is continuous as can be easily checked. Thus, $S$ is a log-correlated Gaussian field.

\subsubsection{The cut-off approximation of the $\star$-scale invariant fields}
\label{sec:The cut off approximation of the star scale invariant fields}
As with the $\star$-scale invariant fields themselves, we will introduce the approximations in the whole space $\R^d$ and use appropriate restrictions when necessary without changing the notations. Consider the centred Gaussian process $(\eps,x)\mapsto S_\eps(x)$ on $(0,1)\times \R^d$ with covariance
\begin{align*}
\Cov(S_\eps(x),S_\delta(y))&=\E[S_\eps(x)S_\delta(y)]
\\
&:=\int_1^{\eps^{-1}\wedge \delta^{-1}}k_1(s|x-y|)\frac{\ds}{s}
\\
&:=K_{(\eps,\delta)}(x,y),
\end{align*}
where $s \wedge t:=\min(s,t)$. Note, that the above is again a covariance since we assumed $k_1(|\cdot-\cdot|)$ to be a covariance function. In the case, $\eps=\delta$ we omit one of the subscripts from above and write $K_{(\eps,\eps)}=K_\eps$. Such a process exists and has a.s. 
H\"older-continuous realizations in $x \in \R^d$ for every fixed $\epsilon>0$.
This follows from standard regularity theory for Gaussian processes or simple Kolmogorov-Chentsov type arguments \cite[Proposition B.2]{LaRhVa15a}. For every fixed $x\in \R^d$ with the ``time-change'' $\eps(t)=e^{-t}$, $(S_\eps(x))_{\eps\in(0,1)}\mapsto (\tilde{S}_t(x))_{t\in(0,\infty)}$ the process $\tilde{S}_t$ has the law of a standard Brownian motion. Thus, for fixed $x\in\R^d$ this is a real valued martingale. For fixed $\eps\in(0,1)$ we call $(S_\eps(x))_{x\in \R^d}$ the cut-off approximation of the $\star$-scale invariant field $S$. $(S_\eps(x))_{\eps\in(0,1)}$ is a martingale in the decreasing parameter $\eps$, that is, it is an ordinary martingale in the parameter $\eps^{-1}$ and it is straightforward to show that so is $\innerp{S_\eps}{f}$ for a fixed test function $f$. Thus, by simple $L^2$-boundedness argument one can readily show that $\innerp{S_\eps}{f}$ converges almost surely to random variable, that has the same distribution as $\innerp{S}{f}$.   

\subsubsection{Scaling and translation properties of $\star$-scale invariant fields}
\label{sec:Scaling properties of star-scale invariant fields}
To study the scaling properties of the $\star$-scale invariant fields and the corresponding GMC measures we define two auxiliary fields constructed from the field $S_\eps$. For $x,u\in \R^d$ and $0<\delta<\eps<1$ we define the processes
\begin{align}
\label{eq: Def Z}
(\delta,\eps,x) \mapsto & S_\delta(x+\eps u)-S_\eps(x+\eps u)=:Z_{(\eps,\delta,x)}(u)
\\
\label{eq:Def Y}
(\eps,x) \mapsto & S_\eps(x+\eps u)-S_{\eps}(x)=:Y_{\eps,x}(u).
\end{align}
We need the following basic facts.
\begin{proposition}
\label{prop: properties of the process Z}
For fixed parameters $x\in \R^d$ and $\eps\in (0,1)$, and the variables $u\in \R^d$ and $\delta\in (0,\eps)$
\begin{enumerate}
\item the processes
\begin{align*}
\quad (\delta,u)\mapsto Z_{(\eps,\delta,x)}(u) \quad &\text{and} \quad u\mapsto S_\eps(u) 
\end{align*}
are independent and similarly the processes
\begin{align*}
\quad (\delta,u)\mapsto Z_{(\eps,\delta,x)}(u) \quad &\text{and} \quad u\mapsto Y_{\eps,x}(u) 
\end{align*}
are also independent.
\item
Furthermore, for the processes 
\begin{align*}
\quad (\delta,u)\mapsto Z_{(\eps,\delta,x)}(u) \quad &\text{and} \quad (\delta,u)\mapsto S_{\frac{\delta}{\eps}}(u) 
\end{align*}
we have
\begin{align*}
(Z_{(\eps,\delta,x)}(u))_{\{0<\delta<\eps,u\in\R^d\}}\overset{d}{=}(S_{\frac{\delta}{\eps}}(u))_{\{0<\delta<\eps,u\in\R^d\}}.
\end{align*}
\end{enumerate}
\end{proposition}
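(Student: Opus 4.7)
The plan is to reduce everything to covariance computations, since all processes involved are jointly centred Gaussian, so independence is equivalent to vanishing cross-covariance, and an equality in law between two jointly Gaussian processes is equivalent to equality of the covariance kernels.

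First I would handle part (1). For the cross-covariance of $Z_{(\eps,\delta,x)}(u)$ with $S_\eps(v)$, bilinearity gives
\begin{align*}
\Cov(Z_{(\eps,\delta,x)}(u),S_\eps(v))
= K_{(\delta,\eps)}(x+\eps u,v)-K_{(\eps,\eps)}(x+\eps u,v).
\end{align*}
Because $\delta<\eps$ we have $\delta^{-1}\wedge\eps^{-1}=\eps^{-1}$, so both kernels equal $\int_1^{\eps^{-1}}k_1(s|x+\eps u-v|)\,\tfrac{ds}{s}$ and the difference vanishes. Joint Gaussianity of $(S_\eps,S_\delta)$ together with the vanishing of the cross-covariance for all choices of $u,v,\delta$ then implies that the processes $u\mapsto S_\eps(u)$ and $(\delta,u)\mapsto Z_{(\eps,\delta,x)}(u)$ are independent. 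Independence of $Z_{(\eps,\delta,x)}$ and $Y_{\eps,x}$ then follows immediately, since $Y_{\eps,x}(u)=S_\eps(x+\eps u)-S_\eps(x)$ is a linear functional of $S_\eps$ and linear functionals of independent Gaussian processes remain independent.

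For part (2) I would compute
\begin{align*}
\Cov\!\bigl(Z_{(\eps,\delta_1,x)}(u_1),Z_{(\eps,\delta_2,x)}(u_2)\bigr)
&=K_{(\delta_1,\delta_2)}(x+\eps u_1,x+\eps u_2)-K_{(\delta_1,\eps)}(\cdot,\cdot) \\
&\quad -K_{(\eps,\delta_2)}(\cdot,\cdot)+K_{(\eps,\eps)}(\cdot,\cdot).
\end{align*}
Since $\delta_i<\eps$, three of the four upper integration limits collapse to $\eps^{-1}$, so the cancellation leaves
\begin{align*}
\int_{\eps^{-1}}^{1/\max(\delta_1,\delta_2)} k_1(s\eps|u_1-u_2|)\,\frac{ds}{s}.
\end{align*}
The change of variables $t=s\eps$ (invariant measure $ds/s=dt/t$) transforms this into
\begin{align*}
\int_1^{\eps/\max(\delta_1,\delta_2)} k_1(t|u_1-u_2|)\,\frac{dt}{t}
=K_{(\delta_1/\eps,\delta_2/\eps)}(u_1,u_2),
\end{align*}
which is exactly $\Cov(S_{\delta_1/\eps}(u_1),S_{\delta_2/\eps}(u_2))$. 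Since both processes are centred Gaussian with identical covariance kernels, they have the same finite-dimensional distributions, which upgrades to equality of law of the whole process.

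The only delicate point is bookkeeping of the minimum/maximum in the cut-off $\delta^{-1}\wedge\eps^{-1}$: one has to use $\delta<\eps$ consistently so that the right three of the four kernel pieces really do coincide with the integral up to $\eps^{-1}$, leaving behind precisely the annulus of scales $[\eps^{-1},1/\max(\delta_1,\delta_2)]$ that, after rescaling, encodes the $\star$-scale invariance.
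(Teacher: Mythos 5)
Your argument is correct and follows essentially the same route as the paper: reduce to covariance calculations, use $\delta<\eps$ to collapse the cut-off kernels, and rescale $s\mapsto \eps s$ to identify the law of $Z_{(\eps,\delta,x)}$ with that of $S_{\delta/\eps}$. The one small streamlining is that you deduce independence of $Z_{(\eps,\delta,x)}$ and $Y_{\eps,x}$ from $Y_{\eps,x}$ being a linear functional of the process $S_\eps$, whereas the paper recomputes the four-term cross-covariance explicitly; both are valid.
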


\begin{proof}
All processes are Gaussian. Thus, we simply check the covariances. For the first claim we have
\begin{align*}
\Cov(Z_{(\eps,\delta,x)}(u),S_\eps(v))=K_{(\delta,\eps)}(x+\eps u,v)-K_\eps(x+\eps u,v)=0,
\end{align*}
because $\delta<\eps\Rightarrow \eps^{-1}\wedge\delta^{-1}=\eps^{-1}$. Similarly we have
\begin{align*}
\E[Z_{\delta,\eps,x}(u)Y_{\eps,x}(v)]=&K_{(\delta,\eps)}(x+\eps u,x+\eps v)-K_{\eps}(x+\eps u,x+\eps v)
\\
&-K_{(\delta,\eps)}(x+\eps u,x)+K_{\eps}(x+\eps u,x)=0.
\end{align*}
For the second claim we obtain
\begin{align*}
\Cov(Z_{(\eps,\delta,x)}(u),Z_{(\eps,\eta,x)}(v))=&K_{(\delta,\eta)}(\eps u,\eps v)-K_{\eps}(\eps u,\eps v)
\\
=&\int_{\eps^{-1}}^{\delta^{-1}\wedge\eta^{-1}}k_1(s\eps| u- v|)\frac{\ds}{s}
\\
=&\int_1^{\frac{\eps}{\delta}\wedge\frac{\eps}{\eta}}k_1(s|u-v|)\frac{\ds}{s}
\\
=&\Cov(S_{\frac{\delta}{\eps}}(u),S_{\frac{\eta}{\eps}}(v)),
\end{align*}
where we have used the translation invariance of $K$ and the fact that $\delta,\eta<\eps$.  
\end{proof}
Note that $Y_{\eps,x}$ and $S_\eps$ need not be independent. Indeed,
\begin{align*}
\E[Y_{\eps,x}(u)S_\eps(v)]=&K_{\eps}(x+\eps u,v)-K_\eps(x,v)
\\
=&\int_1^{\eps^{-1}}[k_1(s|x-v+\eps u|)-k_1(s|x-v|)]\frac{ds}{s}
\end{align*}
is not equal to zero identically for all $x,u,v\in \R^d$ and $\eps\in (0,1)$. 

\subsection{Convergence and extrema of continuous Gaussian processes}
\label{subsec:Convergence and extrema of continuous Gaussian processes}
In this section, we review some facts about convergence and extrema of Gaussian random variables with values on the space of continuous functions, that is, continuous Gaussian process. First we consider Gaussian random variables with values on the space of continuous functions on a compact metric space with the topology of uniform convergence. For the first topic of convergence in law of Gaussian processes, we need some notions from the general theory of convergence in law in metric spaces. In the applications, the metric space will be continuous functions on a subset of Euclidean space with a metric induced by the $\sup$-norm. To begin, we have the following definitions
\begin{definition}[Relative compactness in distribution]
A sequence $\{X_n\}_{n\in\N}$ of random elements in a metric space T is relatively compact if every subsequence has a further subsequence that converges in law (distribution).
\end{definition}
\begin{definition}[Tightness of a sequence of elements in a metric space]
We say that a sequence of random elements $\{X_n\}_{n\in\N}$ in a metric space $T$ is tight if
\begin{align*}
\sup_{K\subset T}\liminf_{n\to \infty}\P(X_n\in K)=1,
\end{align*}
where the supremum is  taken over all compact sets $K\subset T$. 
\end{definition}
Then we need two theorems 
\begin{theorem}[{{\cite[Theorem 16.2]{Ka02a}}}]
Let $\{X_n\}_{n\in \N}$ be a sequence of random elements in $C(K,T)$ (continuous functions on a compact metric space $K$ with values in an arbitrary metric space $T$). Then $X_n\overset{d}{\rightarrow}X$ iff $X_n\overset{fd}{\rightarrow}X$ and $\{X_n\}_{n \in \N}$ is relatively compact in distribution.
\end{theorem}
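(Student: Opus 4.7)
The plan is to prove the equivalence by handling the two implications separately, leaning on the standard machinery of weak convergence on metric spaces. The forward direction is essentially the continuous mapping theorem, while the reverse direction proceeds via a subsequence argument together with the fact that the finite-dimensional distributions determine the law on $C(K,T)$.

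For the forward implication, suppose $X_n\overset{d}{\to}X$. Given any finite collection of points $k_1,\dots,k_m\in K$, the evaluation map $\pi_{k_1,\dots,k_m}\colon C(K,T)\to T^m$ defined by $f\mapsto(f(k_1),\dots,f(k_m))$ is continuous with respect to the sup-norm topology on $C(K,T)$ and the product topology on $T^m$. The continuous mapping theorem then yields convergence of the finite-dimensional distributions. Relative compactness in distribution is immediate from the definition: every subsequence of a convergent-in-distribution sequence converges (to the same limit), so in particular admits a convergent further subsequence.

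For the reverse direction, suppose $X_n\overset{fd}{\to}X$ and $\{X_n\}$ is relatively compact in distribution. Fix any subsequence $\{X_{n_k}\}$. By relative compactness, extract a further subsequence $\{X_{n_{k_j}}\}$ converging in distribution to some random element $Y$ of $C(K,T)$. By the forward implication just established, the finite-dimensional distributions of $Y$ coincide with the limits of the corresponding finite-dimensional distributions of $X_{n_{k_j}}$, which by hypothesis equal those of $X$. Hence $Y\overset{d}{=}X$. Since every subsequence has a further subsequence converging in distribution to $X$, standard subsequence principles give $X_n\overset{d}{\to}X$.

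The main obstacle is the identification step, namely that the law of a random element of $C(K,T)$ is uniquely determined by its finite-dimensional distributions. Since $K$ is a compact metric space, it is separable; fix a countable dense set $\{k_i\}_{i\in\N}$. Every $f\in C(K,T)$ is determined by the values $\{f(k_i)\}_{i\in\N}$, and the Borel $\sigma$-algebra on $C(K,T)$ coincides with the cylindrical $\sigma$-algebra generated by the evaluations $\{\pi_{k_i}\}$. A $\pi$-$\lambda$ (or Dynkin) argument then shows that any two Borel probability measures on $C(K,T)$ agreeing on all finite-dimensional marginals must coincide. This detail is standard and is the only nontrivial ingredient beyond the formal subsequence juggling.
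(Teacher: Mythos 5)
The paper does not prove this statement; it is quoted verbatim from Kallenberg \cite[Theorem 16.2]{Ka02a} as an auxiliary tool, so there is no paper proof to compare against. Your argument is the standard textbook proof of this equivalence (evaluation maps and continuous mapping for the forward direction; subsequence extraction plus uniqueness of the law from finite-dimensional distributions for the converse), and it is correct in its essentials. One point worth making explicit: the identification step — that the cylindrical $\sigma$-algebra on $C(K,T)$ coincides with the Borel $\sigma$-algebra, so that fdds determine the law — uses separability of $C(K,T)$, which in turn requires $T$ to be separable, not literally arbitrary as the paper's transcription of Kallenberg's statement suggests. In the applications in the paper $T=\R$ or a finite-dimensional Euclidean space, so this is harmless, but since your proof leans on this fact you should either state the separability hypothesis or note that the result is applied only in that setting.
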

\begin{remark}
Above $X_n\overset{fd}{\rightarrow}X$ means the convergence of all finite dimensional distributions, that is, random vectors formed from an arbitrary number of point evaluations. For Gaussian random elements it is enough to check that the covariance converges point-wise.
\end{remark}
\begin{theorem}[{{\cite[Theorem 16.3]{Ka02a}}}]
For any sequence of random elements in a metric space $T$, tightness implies relative compactness in distribution and the two are equivalent if $T$ is Polish space. 
\end{theorem}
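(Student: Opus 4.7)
The plan is to establish the two implications separately. The forward direction (tightness implies relative compactness in distribution) works in any metric space, while the converse requires the Polish hypothesis and is the substantive content of the theorem; the statement is essentially Prokhorov's theorem.

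For the forward direction, I would use the tightness hypothesis to choose, for each $m \in \N$, a compact set $K_m \subset T$ with $\liminf_{n \to \infty} \P(X_n \in K_m) \geq 1 - 1/m$. On a fixed compact metric space, the collection of Borel (sub)probability measures is weakly relatively compact by Helly's selection principle (equivalently, by Riesz representation combined with Banach--Alaoglu). A diagonal extraction along the family $\{K_m\}_{m \in \N}$ then produces a subsequence $\{X_{n_k}\}$ whose restrictions to each $K_m$ converge weakly to a sub-probability measure $\nu_m$. The $\nu_m$ are consistent in $m$, and their total masses increase to $1$, so they assemble into a Borel probability measure $\mu$ on $T$, giving $X_{n_k} \to \mu$ in law.

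For the converse, assume $T$ is Polish and fix $\varepsilon > 0$. Using separability, pick a countable dense sequence $\{t_k\}_{k \in \N}$; for each $m$ the open balls $B(t_k, 1/m)$ cover $T$. The goal is to find $N_m$ with $\P\bigl(X_n \in \bigcup_{k \leq N_m} B(t_k, 1/m)\bigr) > 1 - \varepsilon \cdot 2^{-m}$ uniformly in $n$. I would argue by contradiction: if no such $N_m$ worked for some fixed $m$, diagonalisation produces a subsequence along which the mass of every finite union $\bigcup_{k \leq N} B(t_k, 1/m)$ is bounded above by $1 - \varepsilon \cdot 2^{-m}$. Relative compactness supplies a further weakly convergent sub-subsequence, and the Portmanteau inequality for open sets, applied to each finite union and then letting $N \to \infty$, forces the weak limit to have total mass strictly less than $1$, contradicting that it is a probability measure. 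Intersecting over $m$ yields a closed totally bounded set whose complement has uniform probability at most $\varepsilon$, and completeness of $T$ then promotes it to a compact set, giving tightness at level $\varepsilon$.

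The main obstacle is the converse direction: one has to convert the abstract ``every subsequence has a weakly convergent sub-subsequence'' into a quantitative, uniform-in-$n$ estimate for the individual laws $\P \circ X_n^{-1}$. The Polish hypothesis enters in two essentially unavoidable places, namely separability (to reduce the cover at each scale $1/m$ to a countable subcover, enabling the contradiction argument) and completeness (to upgrade the closed totally bounded set constructed at the end into a genuinely compact one, which is what the definition of tightness demands).
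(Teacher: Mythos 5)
The paper does not prove this statement; it is quoted directly from Kallenberg's textbook \cite[Theorem 16.3]{Ka02a} as the metric-space form of Prokhorov's theorem and used as a black box, so there is no internal proof to compare against. Your outline is the classical Prokhorov--Le Cam argument and is correct in substance, but one phrase in the forward direction is dangerous if read literally: ``the $\nu_m$ are consistent in $m$'' is false if it means $\nu_m = \nu_{m'}\vert_{K_m}$ for $m<m'$, since restriction to a fixed Borel set is not weakly continuous --- a sequence of point masses approaching a boundary point of $K_m$ from outside gives $\nu_m \neq \nu_{m'}\vert_{K_m}$. What does survive the weak limit is the monotonicity $\nu_m \leq \nu_{m'}$ as measures on $T$ (test against nonnegative continuous functions and extend to nonnegative Borel functions); combined with $\nu_m(T) \geq 1 - 1/m \to 1$, this is what justifies defining $\mu := \lim_m \nu_m$ and then running the three-term estimate (split $\int f\,d\mu_{n_k}$ over $K_m$ and its complement) to get weak convergence of $X_{n_k}$ to $\mu$. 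In the converse direction, the diagonalisation step needs the observation that since the unions $U_N := \bigcup_{k\leq N} B(t_k,1/m)$ increase in $N$, the failure sets $F_N := \{n : \P(X_n \in U_N) \leq 1-\varepsilon 2^{-m}\}$ are nested decreasing and nonempty; either some fixed $n$ lies in every $F_N$ (which gives $\P(X_n\in T)<1$ directly as $U_N\uparrow T$), or one can choose a strictly increasing $n_N\in F_N$, at which point your Portmanteau argument applies. Also take closed balls (or closures) before intersecting over $m$, so that the resulting totally bounded set is closed and completeness can promote it to compact.
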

\noindent
Furthermore, there exists a Kolmogorov-Chentsov type criterion for the tightness. For simplicity, let $(X_n)_{n\in\N}$ be random elements in $C(K,\R)\equiv C(K)$ ($K\subset \R^d$ compact) (for the general case, see \cite[Theorem 16.9]{Ka02a}). Then we have
\begin{theorem}
\label{thm:Kolmogorov-chentsov type criteria for the tightness of sequences processes}
The sequence $\{X_n\}_{n\in\N}$ is tight if $X_n(0)$ is tight and 
\begin{align*}
\E[|X_n(x)-X_n(y)|^a]\leq C |x-y|^{d+b}
\end{align*}
for some suitable constants $C,a,b>0$ and all $x,y\in K$. 
\end{theorem}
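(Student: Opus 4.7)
The plan is to deduce tightness in $C(K)$ from the Arzelà--Ascoli compactness theorem, using the tightness of $X_n(0)$ to pin down one function value and running the classical Kolmogorov--Chentsov chaining argument on the moment hypothesis to obtain a uniform-in-$n$ bound on the modulus of continuity.

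First, I would recall that by Arzelà--Ascoli, for any constants $M,L>0$ and any $\alpha\in(0,1]$, the set
\begin{align*}
K_{M,L,\alpha}:=\{f\in C(K):|f(0)|\leq M,\ |f(x)-f(y)|\leq L|x-y|^\alpha\ \text{for all}\ x,y\in K\}
\end{align*}
is compact in $C(K)$ under the uniform topology, since it is uniformly bounded and equicontinuous. Tightness therefore reduces to the claim that, for every $\eta>0$, parameters $M,L,\alpha$ can be chosen so that $\P(X_n\in K_{M,L,\alpha})\geq 1-\eta$ for all $n$. By tightness of $\{X_n(0)\}$, I pick $M$ with $\P(|X_n(0)|>M)<\eta/2$ uniformly in $n$; the remaining task is a Hölder bound on the fluctuations that holds with high probability, uniformly in $n$.

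Second, fix any exponent $\alpha\in(0,b/a)$ and apply the standard Kolmogorov--Chentsov chaining argument to the hypothesis
\begin{align*}
\E[|X_n(x)-X_n(y)|^a]\leq C|x-y|^{d+b}.
\end{align*}
One partitions $K$ into dyadic cubes of side $2^{-k}$ and uses the moment bound, together with a union bound over the $O(2^{kd})$ neighbouring dyadic vertex pairs, to control the $L^a$-norm of the maximal increment across scale $k$. Summing the resulting geometric series, which converges precisely because $\alpha<b/a$, and passing from dyadic vertices to arbitrary $x,y\in K$ by continuity yields
\begin{align*}
\E\left[\sup_{x\neq y,\,x,y\in K}\frac{|X_n(x)-X_n(y)|^a}{|x-y|^{\alpha a}}\right]\leq C',
\end{align*}
with $C'$ depending only on $a,b,C,\alpha,d$ and $\diam(K)$. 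Markov's inequality then lets me fix $L$ independently of $n$ so that this supremum exceeds $L^a$ with probability at most $\eta/2$, and combining with the bound on $X_n(0)$ gives $\P(X_n\in K_{M,L,\alpha})\geq 1-\eta$.

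The only delicate point is the uniformity in $n$ of the chaining constant $C'$, and this is essentially automatic: the hypothesized constants $C,a,b$ are independent of $n$ by assumption, and the chaining bookkeeping is a deterministic calculation that converts any single-scale moment estimate of the stated form into a global Hölder seminorm estimate whose constant depends only on $C,a,b,\alpha,d$ and $\diam(K)$. In effect, the argument is the classical proof of the Kolmogorov continuity theorem applied once and for all to the sequence, with constants tracked explicitly to confirm they do not deteriorate with $n$.
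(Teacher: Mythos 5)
The paper does not prove this statement; it is stated as a standard fact and attributed to \cite[Theorem 16.9]{Ka02a} (Kallenberg). Your proposal is a correct direct proof and follows precisely the canonical route that the cited reference takes: characterize compact subsets of $C(K)$ via Arzel\`a--Ascoli as sets with a uniform bound at a base point plus a uniform H\"older seminorm bound, deduce a uniform-in-$n$ $L^a$-bound on the H\"older seminorm from the moment hypothesis via dyadic chaining (valid for any $\alpha < b/a$, with constant depending only on $a,b,C,\alpha,d,\diam(K)$), and combine Markov's inequality with the tightness of $X_n(0)$ to place $X_n$ in a compact set with probability $\geq 1-\eta$. The only point worth tightening is that $K$ is an arbitrary compact subset of $\R^d$, so the dyadic decomposition should be run on a cube containing $K$ after extending the moment bound (or, equivalently, one chains over the trace of the dyadic grid on $K$); this is routine and does not affect the conclusion.
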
     
\noindent
Recall, that for a real centred Gaussian random variable $G$ with variance $\sigma^2$, we have the following relation for positive integer absolute moments $\E[|G|^p]=B_p \sigma^p$ with an explicit constant $B_p$ whose precise form is not important here. Therefore, if $\E[G^2]\leq A$ also $\E[|G|^p]\leq B_p A^{p/2}$. If $\alpha>0$ we can always find large enough $p$ such that $\alpha p/2>d+b$. Thus, we have the following simple corollary for Gaussian processes $\{G_n\}_{n\in N}$.

\begin{corollary}
\label{thm:Kolmogorov-chentsov type criteria for the tightness of Gaussian sequences}
The sequence $\{G_n\}_{n\in\N}$ is tight if $G_n(0)$ is tight and 
\begin{align*}
\E[|G_n(x)-G_n(y)|^2]\leq C |x-y|^{\alpha}
\end{align*}
for some suitable constants $C,\alpha>0$ and all $x,y\in K$. 
\end{corollary}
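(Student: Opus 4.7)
The plan is to reduce the corollary directly to \Cref{thm:Kolmogorov-chentsov type criteria for the tightness of sequences processes} by upgrading the second-moment hypothesis to an $L^p$-moment hypothesis via the Gaussian moment comparison recalled in the paragraph preceding the statement. Since the only content is choosing $p$ large enough to absorb the factor $d$ appearing in the Kolmogorov--Chentsov exponent, I expect the argument to be essentially immediate.

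First I would fix $n \in \N$ and $x,y \in K$ and note that $G_n(x)-G_n(y)$ is a centered real Gaussian random variable, so its $p$-th absolute moment equals $B_p \sigma_n(x,y)^p$, where $\sigma_n(x,y)^2 := \E[|G_n(x)-G_n(y)|^2]$ and $B_p$ depends only on $p$. Substituting the hypothesized variance bound $\sigma_n(x,y)^2 \leq C |x-y|^\alpha$ gives the pointwise estimate
\begin{align*}
\E[|G_n(x)-G_n(y)|^p] \leq B_p C^{p/2} |x-y|^{\alpha p/2},
\end{align*}
uniformly in $n$. Next I would choose any $p$ with $\alpha p / 2 > d$, for instance any integer $p > 2d/\alpha$, and set $b := \alpha p/2 - d > 0$, $a := p$, and $C' := B_p C^{p/2}$. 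With these choices the bound displayed above is exactly the hypothesis $\E[|G_n(x)-G_n(y)|^{a}] \leq C' |x-y|^{d+b}$ of \Cref{thm:Kolmogorov-chentsov type criteria for the tightness of sequences processes}, and combined with the assumed tightness of $G_n(0)$ that theorem yields tightness of $\{G_n\}$ in $C(K)$.

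There is no genuine obstacle here; the only points requiring a moment of care are that the Gaussian moment identity is being applied to the real-valued scalar increment $G_n(x)-G_n(y)$ (so that the constant $B_p$ is the standard one, independent of $n,x,y$), and that the $d$ appearing in \Cref{thm:Kolmogorov-chentsov type criteria for the tightness of sequences processes} refers to the ambient dimension of $K \subset \R^d$ fixed in the set-up, so that the choice $p > 2d/\alpha$ does produce a strictly positive $b$. Both are immediate from how the theorem is stated.
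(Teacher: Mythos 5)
Your argument is exactly the one the paper gives in the paragraph preceding the corollary: use the Gaussian moment identity $\E[|G|^p]=B_p\sigma^p$ to upgrade the second-moment bound to a $p$-th moment bound, then choose $p$ large enough that $\alpha p/2>d$ so as to invoke \Cref{thm:Kolmogorov-chentsov type criteria for the tightness of sequences processes}. It is correct and essentially identical to the paper's reasoning.
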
     

As an application, we have the following proposition
\begin{proposition}
\label{prop: convergence of Yx and Yx'}
Fix $x\neq x' \in \R^d$. Then as $\eps\to 0$ the restrictions of the processes (defined in \Cref{eq:Def Y})
\begin{align*}
u\mapsto Y_{\eps,x}(u) \quad \text{and} \quad u\mapsto Y_{\eps,x'}(u)
\end{align*}
to $\overline{B_1(0)}$ converge jointly in distribution (in $C(\overline{B(0,1)})$) to two independent copies of centred Gaussian processes with covariance function
\begin{align*}
C(u,v):=\int_0^1\frac{k_1(s|u-v|)-k_1(s|u|)-k_1(s|v|)+1}{s}\ds.
\end{align*}
\end{proposition}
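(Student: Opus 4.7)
The plan is to apply the two Kallenberg criteria cited just above (\cite[Theorems 16.2 and 16.3]{Ka02a}): since $C(\overline{B_1(0)},\R^2)$ is Polish, to get joint convergence in law it suffices to establish (i) convergence of all finite-dimensional distributions of $(Y_{\eps,x},Y_{\eps,x'})$ to those of a pair of independent Gaussian processes with covariance $C$, and (ii) tightness of the sequence in $C(\overline{B_1(0)},\R^2)$. Because everything in sight is Gaussian with zero mean, (i) reduces to the pointwise convergence of the two self-covariances and the cross-covariance.

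First I would compute the self-covariance. Writing $K_\eps(a,b)=\int_1^{\eps^{-1}}k_1(s|a-b|)\,\frac{ds}{s}$ and expanding $Y_{\eps,x}(u)$ as a difference gives
\begin{align*}
\E[Y_{\eps,x}(u)Y_{\eps,x}(v)]=\int_1^{\eps^{-1}}\bigl[k_1(s\eps|u-v|)-k_1(s\eps|u|)-k_1(s\eps|v|)+1\bigr]\frac{ds}{s},
\end{align*}
and, after the change of variables $t=s\eps$,
\begin{align*}
\E[Y_{\eps,x}(u)Y_{\eps,x}(v)]=\int_\eps^{1}\frac{k_1(t|u-v|)-k_1(t|u|)-k_1(t|v|)+1}{t}\,dt.
\end{align*}
The Hölder continuity of $k_1$ with $k_1(0)=1$ gives that the integrand is $O(t^\alpha)$ near $t=0$, so the integral converges to $C(u,v)$ as $\eps\to0$. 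For the cross-covariance, the same expansion yields
\begin{align*}
\E[Y_{\eps,x}(u)Y_{\eps,x'}(v)]=\int_1^{\eps^{-1}}\bigl[k_1(s|x-x'+\eps(u-v)|)-k_1(s|x-x'+\eps u|)-k_1(s|x-x'-\eps v|)+k_1(s|x-x'|)\bigr]\frac{ds}{s}.
\end{align*}
Since $\supp k_1\subset[0,1)$ and $|x-x'|>0$ is fixed, all four summands vanish once $s|x-x'|$ exceeds $1+o(1)$, so the range of integration is uniformly bounded for small $\eps$; on this bounded range Hölder continuity gives an integrand of size $O((s\eps)^\alpha)$, yielding $O(\eps^\alpha)\to 0$. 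Thus the limiting Gaussian process is exactly a pair of independent copies with covariance $C$.

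For tightness in $C(\overline{B_1(0)})$, I would invoke \Cref{thm:Kolmogorov-chentsov type criteria for the tightness of Gaussian sequences}. Since $Y_{\eps,x}(0)\equiv 0$, tightness at the base point is automatic. For the increment, the same expansion gives
\begin{align*}
\E\bigl[(Y_{\eps,x}(u)-Y_{\eps,x}(v))^2\bigr]=2\int_1^{\eps^{-1}}\bigl[1-k_1(s\eps|u-v|)\bigr]\frac{ds}{s}=2\int_\eps^{1}\frac{1-k_1(t|u-v|)}{t}\,dt.
\end{align*}
Using $|1-k_1(r)|\le C r^\alpha$ for $r\le 1$ and $|1-k_1(r)|\le 1$ otherwise, a direct estimate bounds this by $C|u-v|^\alpha$ uniformly in $\eps\in(0,1)$, which yields the required modulus for both $Y_{\eps,x}$ and $Y_{\eps,x'}$; joint tightness in $C(\overline{B_1(0)},\R^2)$ follows from the tightness of each coordinate. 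Combining (i) and (ii) via \cite[Theorem 16.2]{Ka02a} closes the argument.

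The only place requiring care is the cross-covariance estimate: the point is to exploit simultaneously the compact support of $k_1$ (which cuts the $s$-integral down to a bounded interval) and its Hölder regularity (which produces the $\eps^\alpha$ factor from the second difference). Both the convergence of the self-covariance integral at $t=0$ and the tightness bound are then routine applications of the Hölder property of $k_1$.
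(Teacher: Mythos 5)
Your proposal is correct and follows essentially the same route as the paper: Kallenberg's criteria via finite-dimensional convergence of the (jointly Gaussian) covariances plus the Kolmogorov--Chentsov tightness bound, with the same change of variable for the self-covariance and increment estimates. The only cosmetic difference is in the cross-covariance: the paper changes variables and invokes dominated convergence with the integrable dominating function $2C_\alpha|v|^\alpha s^{\alpha-1}$, whereas you keep the original variable, observe that $\supp k_1\subset[0,1)$ restricts $s$ to a bounded interval, and extract the explicit rate $O(\eps^\alpha)$ from the Hölder bound; both arguments exploit exactly the same two properties of $k_1$ and are interchangeable.
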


\begin{proof}
It is enough to consider an arbitrary deterministic sequence such that $\eps_n\rightarrow 0$ as $n\to\infty$ and the processes $G_n(u):=(Y_{\eps_n,x}(u),Y_{\eps_n,x'}(u))$. Then $G_n(0)=0$ for all $n$ and so $\{G_n(0)\}_{n\in\N}$ is in particular tight. By denoting for simplicity $G_n=(G_n^1,G_n^2)$ we have
\begin{align*}
\E\abr{G_n^1(u)G_n^1(v)}&=\E\abr{G_n^2(u)G_n^2(v)}
\\
&=\int_1^{\eps_n^{-1}}\frac{k_1(s\eps_n|u-v|)-k_1(s\eps_n|u|)-k_1(s\eps_n|v|)+1}{s}\ds
\\
&=\int_{\eps_n}^1\frac{k_1(s|u-v|)-k_1(s|u|)-k_1(s|v|)+1}{s}\ds,
\end{align*}
where we have used the basic properties of the covariance of $S_\eps$ and the fact that $k_1(0)=1$. From the above, we also readily see that $\E[(G_n^1(u))^2]=2\int_{\eps_n}^1[(1-k_1(s|u|))/s]\ds$ since $k_1(0)=1$. Thus, we have
\begin{align*}
\E\abr{|G_n(u)-G_n(v)|^2}&=\sum_{i=1}^2\E\abr{(G_n^i(u)-G_n^i(v))^2}
\\
&=\sum_{i=1}^2\rbr{\sum_{t\in\{u,v\}}\E\abr{G_n^i(t)^2}-2\E\abr{G_n^i(u)G_n^i(v)}}
\\
&=2\int_{\eps_n}^1\frac{1-k(s[u-v])}{s}\ds
\\
&\leq 2|u-v|^\alpha\int_{0}^1s^{\alpha-1}\ds
\\
&\leq C_{\alpha}|u-v|^{\alpha},
\end{align*}
where $\alpha$ is the H\"older-exponent of $k$.

The off-diagonal elements then become for example
\begin{align*}
\E[G_n^1(u)&G_n^2(v)]
\\
&=\int_1^{\eps_n^{-1}}[k_1(s|x-x'+\eps_n(u-v)|)-k_1(s|x-x' +\eps_n u|)
\\
&\qquad -k_1(s|x-x'-\eps_n v|)+k_1(s|x-x'|)]\frac{\ds}{s}
\\
&=\int_{\eps_n}^1[k_1(s|(x-x')\eps_n^{-1}+(u-v)|)-k_1(s|\eps_n^{-1}(x-x') + u|)
\\
&\qquad -k_1(s|\eps_n^{-1}(x-x')- v|)+k_1(s|\eps_n^{-1}(x-x')|)]\frac{\ds}{s}.
\end{align*}
For the integrand, we have 
\begin{align*}
\frac{1}{s}|k_1&(s|(x-x')\eps_n^{-1}+(u-v)|)-k_1(s|\eps_n^{-1}(x-x') + u|)
\\
&\qquad-k_1(s|\eps_n^{-1}(x-x')- v|)+k_1(s|\eps_n^{-1}(x-x')|)|
\\
&=\frac{1}{s}\big[|k(s[(x-x')\eps_n^{-1}+(u-v)])-k(s[\eps_n^{-1}(x-x') + u])|
\\
&\qquad+|k(s[\eps_n^{-1}(x-x')- v])-k(s[\eps_n^{-1}(x-x')])|\big]
\\
&\leq 2C_{\alpha}|v|^{\alpha}s^{\alpha-1}
\end{align*}
which is integrable on $(0,1)$. Thus, dominated convergence yields that the off-diagonal elements of the covariance go to zero as $n\to \infty$ since $\supp(k_1)\subset[0,1)$.

Therefore, as the covariance of $G_n$ converges to $C\otimes C$ (diagonal matrix with $C$ on the diagonal). The Kolmogorov-Chentsov type estimate above guarantees that there exists a H\"older-continuous Gaussian process with covariance $C$. Thus,  \Cref{thm:Kolmogorov-chentsov type criteria for the tightness of Gaussian sequences} and the theorems above it give the desired convergence in law of $G_n$ to $(G^1,G^2)$, where $G^1$ and $G^2$ are independent and identically distributed with covariance $C$.
\end{proof}

We also need to control the extrema of the process $\{Y_{\eps,x}(u)\}_{u\in\R^d}$ uniformly in $\eps$. For this, we need some classic results. Namely, Dudley's theorem (see  \cite[Section 1.3]{AdTa07a}) and the Borel-TIS inequality (see \cite[Section 2.1]{AdTa07a}). These are

\begin{theorem}[Dudley's theorem]
\label{thm:Dudley}
\vskip 0.1in
\noindent
Let $T$ be a metric space (such that $(T,d_G)$ (see below) is compact) and $G$ a centred real valued Gaussian process on $T$. Define another (pseudo) metric on T by $d_G(t,s):=\sqrt{\E[\{X(t)-X(s)\}^2]}$. Then there exists a universal constant $C>0$ such that
\begin{align*}
\E\abr{\sup_{t\in T}G(t)}\leq C\int_0^{\diam(T)/2}\sqrt{H_G(\eta)}\d\eta,
\end{align*}
where $H_G$ is the log-entropy corresponding to the metric $d_G$. It is defined by $H_G(\eta):=\log(N_G(\eta))$, where $N_G(\eta)$ is the minimal number of (closed) $d_G$-balls with radius $\eta$ needed to cover $T$
\end{theorem}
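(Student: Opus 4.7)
The proof I propose is the classical \emph{chaining argument}. The plan is to approximate $T$ by a nested sequence of finite $d_G$-nets, bound the oscillation of $G$ between successive approximations using the standard Gaussian maximal inequality, and then compare the resulting dyadic sum to the entropy integral.

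First, fix a basepoint $t_0 \in T$ and observe that $\E[\sup_{t \in T} G(t)] = \E[\sup_{t \in T} (G(t) - G(t_0))]$ since $\E[G(t_0)] = 0$. Put $\delta := \diam(T)/2$ and $\eta_n := 2^{-n}\delta$ for $n \geq 0$. For each $n$ let $T_n$ be a $d_G$-net of minimal cardinality $N_G(\eta_n)$, and assign to every $t \in T$ a nearest point $\pi_n(t) \in T_n$, so that $d_G(t, \pi_n(t)) \leq \eta_n$. Working on a separable version of the process (so that suprema over $T$ equal suprema over a countable dense subset, justifying everything below), one has the telescoping identity
\begin{equation*}
G(t) - G(\pi_0(t)) = \sum_{n=1}^{\infty} \bigl( G(\pi_n(t)) - G(\pi_{n-1}(t)) \bigr),
\end{equation*}
with almost sure convergence that will be a byproduct of the bounds below.

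Next I would take suprema inside the sum: letting $\Delta_n := \{(\pi_n(t), \pi_{n-1}(t)) : t \in T\}$, one has $|\Delta_n| \leq N_G(\eta_n) N_G(\eta_{n-1}) \leq N_G(\eta_n)^2$, and by the triangle inequality each pair satisfies $d_G(\pi_n(t), \pi_{n-1}(t)) \leq \eta_n + \eta_{n-1} = 3\eta_n$. Applying the elementary Gaussian maximal inequality $\E[\max_{1 \leq i \leq N} Z_i] \leq \sigma \sqrt{2 \log N}$, valid for centred Gaussians $Z_i$ with $\Var(Z_i) \leq \sigma^2$, to the increments $G(u) - G(v)$ with $(u,v) \in \Delta_n$ gives
\begin{equation*}
\E\Bigl[\sup_{t \in T} \bigl(G(\pi_n(t)) - G(\pi_{n-1}(t))\bigr)\Bigr] \leq 3\eta_n \sqrt{4 \log N_G(\eta_n)} = 6\eta_n \sqrt{H_G(\eta_n)}.
\end{equation*}
Also, taking $T_0 = \{t_0\}$ makes the $n=0$ term vanish. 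Summing over $n \geq 1$ and interchanging sup and sum yields $\E[\sup_t (G(t) - G(t_0))] \leq 6 \sum_{n \geq 1} \eta_n \sqrt{H_G(\eta_n)}$.

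Finally, since $\sqrt{H_G(\cdot)}$ is non-increasing and $\eta_n = 2(\eta_n - \eta_{n+1})$, I would compare the sum with the integral via
\begin{equation*}
\eta_n \sqrt{H_G(\eta_n)} = 2(\eta_n - \eta_{n+1}) \sqrt{H_G(\eta_n)} \leq 2 \int_{\eta_{n+1}}^{\eta_n} \sqrt{H_G(\eta)}\, \d\eta,
\end{equation*}
so that $\sum_{n \geq 1} \eta_n \sqrt{H_G(\eta_n)} \leq 2 \int_0^{\delta} \sqrt{H_G(\eta)}\, \d\eta$, yielding the stated bound with $C = 12$ (the explicit constant is immaterial). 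The main technical point is the justification of the chaining identity and the exchange of expectation with supremum: one must work with a separable modification, and the finiteness of the right-hand side guarantees a.s. convergence of the telescoping series, so if the entropy integral is infinite the inequality is vacuous. The Gaussian maximal inequality itself is elementary from $\E[e^{\lambda Z}] = e^{\lambda^2 \sigma^2/2}$ and the union bound.
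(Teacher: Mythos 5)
The paper does not prove this theorem; it simply cites it from Adler and Taylor (\cite[Section 1.3]{AdTa07a}), so there is no paper proof to compare against. Your chaining argument is the classical proof of Dudley's entropy bound and is essentially the one given in that reference (and in most textbook treatments), so the approach is sound and the argument is correct in its structure.

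One small bookkeeping point to tighten: with $\eta_0 = \diam(T)/2$ the singleton $\{t_0\}$ is generally \emph{not} an $\eta_0$-net (take three points mutually at distance $\diam(T)$: no ball of radius $\diam(T)/2$ contains all of them), so you cannot simply declare $T_0 = \{t_0\}$ at that scale. The usual fix is to start the chain at $\eta_0 = \diam(T)$, for which $\{t_0\}$ genuinely is a net since $d_G(t,t_0)\le\diam(T)$ for all $t$; then $\eta_1 = \diam(T)/2$, $H_G(\eta_0)=0$, and the sum-to-integral comparison $\eta_n\sqrt{H_G(\eta_n)} \le 2\int_{\eta_{n+1}}^{\eta_n}\sqrt{H_G}$ for $n\ge 1$ still lands exactly on $\int_0^{\diam(T)/2}\sqrt{H_G}$ as stated, at the cost only of a different explicit constant. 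Alternatively, keep $\eta_0 = \diam(T)/2$, let $T_0$ be a genuine minimal $\eta_0$-net, and absorb the extra term $\E[\sup_t(G(\pi_0(t))-G(t_0))]$ by the same Gaussian maximal inequality together with the monotonicity bound $\int_0^{\eta_0}\sqrt{H_G}\ge \tfrac{\eta_0}{2}\sqrt{H_G(\eta_0)}$. Either repair is routine; the rest of your argument (separable version, Gaussian max inequality, interchange of sup and sum, discrete-to-continuous comparison) is exactly as it should be.
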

and 
\begin{theorem}[Borel-TIS inequality]
\label{thm:Borel TIS}
\vskip 0.1in
\noindent 
Let $T$ be as above and $G$ an a.s bounded centred Gaussian process on $T$. Then $\E[\sup_{t\in T}G(t)]<\infty$ and 
\begin{align*}
\P(\{\sup_{t\in T} G(t)-\E[\sup_{t\in T}G(t)]>u\})\leq e^{-\frac{u^2}{2\sigma_T^2}},
\end{align*}
where $\sigma_{T}^2:=\sup_{t\in T}\E[(G(t))^2]$.
\end{theorem}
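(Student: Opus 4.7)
The plan is to deduce the Borel-TIS inequality from the Gaussian concentration of measure phenomenon for Lipschitz functions of standard Gaussian vectors. The key observation is that $\sup_{t \in T} G(t)$, once expressed in terms of the underlying Gaussian coefficients, is a Lipschitz function whose Lipschitz constant is bounded by $\sigma_T$. This converts the tail estimate into an instance of the classical Borell--Sudakov--Tsirelson concentration inequality.

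First I would reduce to a finite-dimensional statement. Since $G$ is a.s.\ bounded, standard separability arguments for Gaussian processes give a countable dense subset $T_0 \subset T$ with $\sup_{t \in T} G(t) = \sup_{t \in T_0} G(t)$ almost surely. Take an increasing sequence of finite sets $F_n \uparrow T_0$ and let $M_n := \sup_{t \in F_n} G(t)$, so that $M_n$ increases a.s.\ to $\sup_{t \in T} G(t)$.

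Next I would represent $(G(t))_{t \in F_n}$ as $A_n \xi$, where $\xi$ is a standard Gaussian vector and the rows $a_t$ of $A_n$ satisfy $\|a_t\|^2 = \E[G(t)^2] \le \sigma_T^2$. The function $f_n(x) := \sup_{t \in F_n} \langle a_t, x \rangle$ is then Lipschitz with constant $\sup_{t \in F_n} \|a_t\| \le \sigma_T$. Applying the Gaussian concentration inequality for $L$-Lipschitz functions of a standard Gaussian vector,
\begin{align*}
\P\bigl(f_n(\xi) - \E[f_n(\xi)] > u\bigr) \leq e^{-u^2/(2L^2)},
\end{align*}
with $L = \sigma_T$, yields the desired tail bound for $M_n$. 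Passing to the limit via monotone convergence — after verifying that $\E[M_n] \to \E[\sup_T G(t)] < \infty$ — delivers the stated inequality.

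The main obstacle is the Gaussian concentration inequality itself, which is typically established either through the Gaussian isoperimetric inequality or via Herbst's argument applied to the Gaussian log-Sobolev inequality; both are deep inputs that one imports as a black box. A secondary subtlety lies in the finiteness of $\E[\sup_T G(t)]$: this can be bootstrapped from the finite-level tail bounds together with the a.s.\ boundedness hypothesis, using that if $\sup_{t\in T} G(t)<\infty$ a.s.\ then its median is finite, and the concentration bound around the median then implies integrability.
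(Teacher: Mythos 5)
The paper does not prove this theorem; it cites it verbatim from \cite[Section 2.1]{AdTa07a} as an imported black-box result, so there is no in-paper proof to compare against. Your sketch is a correct and standard derivation of Borel--TIS via Gaussian concentration for Lipschitz functionals: the finite-dimensional reduction, the representation $G\vert_{F_n}=A_n\xi$ with $\sup_t\|a_t\|\le\sigma_T$ so that $f_n(x)=\sup_t\langle a_t,x\rangle$ is $\sigma_T$-Lipschitz, and the application of the concentration inequality $\P(f_n(\xi)-\E f_n(\xi)>u)\le e^{-u^2/(2\sigma_T^2)}$ are all sound. The one step you compress is the uniform bound $\sup_n\E[M_n]<\infty$: to run the median bootstrap cleanly you should note that the Lipschitz concentration also applies to $-f_n$, giving $\P(M_n<\E[M_n]-u)\le e^{-u^2/(2\sigma_T^2)}$; combining this with a level $m$ for which $\P(\sup_T G>m)<\tfrac12$ (which exists since $\sup_T G<\infty$ a.s.) yields $\E[M_n]\le m+\sigma_T\sqrt{2\log 2}$ uniformly in $n$, after which monotone convergence (applied to $M_n-G(t_0)\ge 0$) gives $\E[\sup_T G]<\infty$ and the limit passage in the tail bound. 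With that point filled in, your argument is complete and is the proof one finds in, e.g., Adler--Taylor or Ledoux--Talagrand.
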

\begin{remark}
In the case, $(T,d_G)$ is not assumed to be compact, \ref{thm:Dudley} is still true, but the upper limit of the integral needs to be $\infty$. The convergence of the entropy integral (sometimes called 
the Dudley integral with various upper bounds) implies also that $G$ has a version with almost all sample paths bounded and continuous \cite[Theorem 11.17]{LeTa91a}. As mentioned earlier, this could be used to check the continuity of the sample paths of $S_\eps$.
\end{remark}
\begin{remark}
The following useful properties are also stated in \cite[Section 2.1]{AdTa07a}. 
An easy corollary to Borel-TIS is 
\begin{align*}
\P\rbr{\sup_{t \in T} G(t)>u}\leq e^{-\frac{\rbr{u-\E\abr{\sup_{t\in T}G(t)}}^2}{2\sigma_T^2}}.
\end{align*}
We also have by symmetry
\begin{align*}
\P\rbr{\sup_{t\in T}|G(t)|>u}\leq 2\P\rbr{\sup_{t\in T}G(t)>u}.
\end{align*}
\end{remark}
We need these for the next result.
\begin{lemma}
\label{lemma: even moments of Y}
Let $Y_{\eps,x}(u)$ be the process defined in \Cref{eq:Def Y}. Then for each $k\in\Z_+$ and $a\in\R$ there exist constants $D_1,D_2,C_k>0$ independent of $\eps>0$ and $x\in\R^d$ such that 
\begin{align}
\label{eq: moments of the maximum of Y}
\E\abr{\sup_{|u|\leq 1}|Y_{\eps,x}(u)|^{2k}}\leq C_k,
\end{align}
and in addition
\begin{align}
\label{eq:exponential moments of the maximum of Y}
0<D_1\leq\E\abr{e^{a\sup_{|u|\leq 1}|Y_{\eps,x}(u)|}}\leq D_2<\infty.
\end{align}
\end{lemma}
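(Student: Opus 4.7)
The plan is to apply the Borel-TIS inequality and Dudley's theorem to the centred Gaussian process $u\mapsto Y_{\eps,x}(u)$ on $T=\overline{B_1(0)}$, and to verify that all the relevant quantities ($\sigma_T^2$, the Dudley entropy integral, and hence $\E[\sup|Y_{\eps,x}|]$) can be bounded by constants that do not depend on $\eps\in(0,1)$ or $x\in\R^d$.

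The key uniform estimates come from the computation already carried out in the proof of \Cref{prop: convergence of Yx and Yx'}. First, using $k_1(0)=1$,
\begin{align*}
\E\abr{Y_{\eps,x}(u)^2}=2\int_\eps^1\frac{1-k_1(s|u|)}{s}\ds\leq 2C\int_0^1 s^{\alpha-1}|u|^\alpha\ds\leq \frac{2C}{\alpha},
\end{align*}
by Hölder-continuity of $k$ (with exponent $\alpha$) and $k(0)=1$, uniformly in $u\in\overline{B_1(0)}$, $\eps\in(0,1)$ and $x$. This gives $\sigma_T^2\leq 2C/\alpha$. An identical argument, replacing $u$ by $u-v$, yields $d_G(u,v)^2=\E[(Y_{\eps,x}(u)-Y_{\eps,x}(v))^2]\leq C_\alpha|u-v|^\alpha$, so a Euclidean ball of radius $(\eta^2/C_\alpha)^{1/\alpha}$ is contained in a $d_G$-ball of radius $\eta$. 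Hence the covering number satisfies $N_G(\eta)\leq C\eta^{-2d/\alpha}$, the log-entropy is bounded by $c_1+c_2\log(1/\eta)$, and the Dudley integral $\int_0^{\diam(T)/2}\sqrt{H_G(\eta)}\d\eta$ is finite and bounded by a constant independent of $\eps,x$. By Dudley's theorem (\Cref{thm:Dudley}) we obtain a constant $M>0$, independent of $\eps$ and $x$, such that $\E[\sup_{|u|\leq 1}Y_{\eps,x}(u)]\leq M$; by symmetry the same bound (up to a factor $2$) holds for $\E[\sup_{|u|\leq 1}|Y_{\eps,x}(u)|]$.

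For the moment bound \Cref{eq: moments of the maximum of Y}, I combine these two estimates with the Borel--TIS inequality (\Cref{thm:Borel TIS}) applied to $Y_{\eps,x}$. Together with the symmetry remark following that theorem this gives, for $t>M$,
\begin{align*}
\P\rbr{\sup_{|u|\leq 1}|Y_{\eps,x}(u)|>t}\leq 2\exp\rbr{-\frac{(t-M)^2}{2\sigma_T^2}}\leq 2\exp\rbr{-\frac{\alpha(t-M)^2}{4C}},
\end{align*}
with constants uniform in $\eps$ and $x$. Writing
\begin{align*}
\E\abr{\sup_{|u|\leq 1}|Y_{\eps,x}(u)|^{2k}}=2k\int_0^\infty t^{2k-1}\P\rbr{\sup_{|u|\leq 1}|Y_{\eps,x}(u)|>t}\dt,
\end{align*}
the integral over $[0,M]$ is bounded by $M^{2k}$ and the integral over $[M,\infty)$ is controlled by the Gaussian tail, giving a finite constant $C_k$ independent of $\eps$ and $x$.

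For the exponential moment bounds \Cref{eq:exponential moments of the maximum of Y}, the upper bound follows similarly: for $a\leq 0$ we have $e^{a\sup|Y_{\eps,x}|}\leq 1$, and for $a>0$ the identity $\E[e^{aZ}]=1+a\int_0^\infty e^{at}\P(Z>t)\dt$ combined with the Gaussian tail above yields a finite uniform upper bound $D_2$. The lower bound is Jensen's inequality: since $x\mapsto e^{ax}$ is convex,
\begin{align*}
\E\abr{e^{a\sup_{|u|\leq 1}|Y_{\eps,x}(u)|}}\geq \exp\rbr{a\,\E\abr{\sup_{|u|\leq 1}|Y_{\eps,x}(u)|}}\geq e^{-|a|\cdot 2M}=:D_1>0,
\end{align*}
which is independent of $\eps$ and $x$. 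The main technical point throughout is simply the uniformity: every estimate reduces to an integral of the form $\int_0^1 s^{\alpha-1}\ds$, whose finiteness is what saves us, and no dependence on $x$ enters because the covariance of $Y_{\eps,x}$ is translation-invariant in $x$.
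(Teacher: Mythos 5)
Your proposal is correct and follows essentially the same route as the paper: both extract the uniform bound $d_{Y_{\eps,x}}(u,v)\leq C_\alpha|u-v|^{\alpha/2}$ from the covariance computation in \Cref{prop: convergence of Yx and Yx'}, feed it into Dudley's theorem to bound $\E[\sup Y_{\eps,x}]$ uniformly in $\eps$ and $x$, invoke Borel--TIS for the Gaussian tail, integrate the tail for the moment bound, and use Jensen for the lower exponential bound. The only differences are cosmetic (you work directly with the two-sided tail of $\sup|Y_{\eps,x}|$, whereas the paper first reduces to $\sup Y_{\eps,x}$; you split the tail integral at $M$, the paper does not).
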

\noindent
From the lemma above we obtain the direct corollary.
\begin{corollary}
The maximum over $B_1(0)$ of the process $|Y_{\eps,x}|$ has all positive moments. 
\end{corollary}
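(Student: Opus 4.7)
The corollary is an immediate consequence of \Cref{lemma: even moments of Y}, so the plan is essentially a one-step monotonicity/interpolation argument.

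The plan is to fix an arbitrary $p>0$ and choose a positive integer $k$ with $2k\geq p$. Writing $M_{\eps,x}:=\sup_{|u|\leq 1}|Y_{\eps,x}(u)|$, the random variable $M_{\eps,x}$ is nonnegative, so I can apply Jensen's inequality to the convex function $t\mapsto t^{2k/p}$ (convex since $2k/p\geq 1$) to the random variable $M_{\eps,x}^{p}$. This gives
\begin{align*}
\E\abr{M_{\eps,x}^{p}}^{2k/p}\leq \E\abr{M_{\eps,x}^{2k}}\leq C_k,
\end{align*}
where the last inequality is exactly \Cref{eq: moments of the maximum of Y}. Taking the $(p/2k)$-th power yields $\E[M_{\eps,x}^{p}]\leq C_k^{p/(2k)}$, which is a finite bound independent of $\eps\in(0,1)$ and $x\in\R^d$. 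Since $p>0$ was arbitrary, all positive moments of $M_{\eps,x}$ exist (and are uniformly bounded in $\eps$ and $x$).

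There is no real obstacle here; the only very minor point to mention is that one could alternatively deduce the result from the exponential moment bound \Cref{eq:exponential moments of the maximum of Y}, since finiteness of $\E[e^{a M_{\eps,x}}]$ for some $a>0$ instantly implies finiteness of every polynomial moment via the elementary inequality $t^{p}\leq (p/(ea))^{p}\, e^{a t}$ for $t\geq 0$. Either route is short, and I would include the Jensen-based argument in the proof since it is the most transparent.
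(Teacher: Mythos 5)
Your proof is correct. The paper itself gives no argument here (it simply calls this a ``direct corollary'' of \Cref{lemma: even moments of Y}), and your Jensen-based interpolation from the even moments---or equivalently the alternative you mention via the exponential moment bound in \Cref{eq:exponential moments of the maximum of Y}---is exactly the kind of one-line argument the authors are leaving to the reader.
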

\begin{proof}[Proof of \Cref{lemma: even moments of Y}]
\noindent
We first note that 
\begin{align}
\label{eq: inequality for sup Y}
\sup_{|u|\leq 1}|Y_{\eps,x}(u)| \leq |\sup_{|u|\leq 1} Y_{\eps,x}(u)| + |\sup_{|u|\leq 1} -Y_{\eps,x}(u)|,
\end{align}
and $Y_{\eps,x}$ is centred so that $|\sup_{|u|\leq 1}\pm Y_{\eps,x}(u)|$ are identically distributed. Then we obtain (by $|a+b|^{2k}\leq2^{2k}(|a|+|b|)^{2k}$)
\begin{align*}
\E\abr{\sup_{|u|\leq 1}|Y_{\eps,x}(u)|^{2k}}\leq C_k\E\abr{\bigg(\sup_{|u|\leq 1}Y_{\eps,x}(u)\bigg)^{2k}}
\end{align*}
for some $C_k>0$. 

Noting that the proof of \Cref{prop: convergence of Yx and Yx'} implies that 
\[d_{Y_{\eps,x}}(u,v)\leq C_\alpha|u-v|^{\alpha/2},\]
where $C_\alpha$ can be chosen independent of $\eps>0$ and $x,u,v\in\R^d$. This implies that
\begin{align*}
B_{Y_{\eps,x}}(0,r)=\{D_{Y_{\eps,x}}(0,u)<r\}\supset\{C_{\alpha}|u|^{\frac{\alpha}{2}}<r\}=B_{[r/C_\alpha]^{\frac{2}{\alpha}}}(0)
\end{align*}
where $B_R(0)$ is the Euclidean ball of radius $R>0$. In particular, $N_{Y_{\eps,x}}(\eta)<N([\eta/ C_\alpha]^{2/\alpha})$, where $N_{Y_{\eps,x}}(\cdot)$ and $N(\cdot)$ are the number of $d_{Y_{\eps,x}}$- and Euclidean balls respectively needed to cover the unit ball of $\R^d$. Noting that the number of Euclidean balls of radius $r$ needed to cover the unit ball is comparable to $1/r^d$ and using  Dudley's theorem we obtain
\begin{align*}
\E\abr{\sup_{|u|\leq 1}Y_{\eps,x}(u)}&\leq C\int_0^\infty\sqrt{\log(N(\eta))}\d\eta
\\
&=C\rbr{1+\int_0^1\sqrt{\log(\eta^{-1})}\d\eta}
\\
&<\infty,
\end{align*}
where the constant $C>0$ is independent of $\eps$ and $x$. 
Then noting that $\sigma^2:=\sup_{|u|\leq 1}\E[(Y_{\eps,x}(u))^2]<\infty$ and $E_Y:=\E[\sup_{|u|\leq 1}Y_{\eps,x}(u)]<\infty$ uniformly in $\eps>0$ and $x\in\R^d$ by previous estimates. Thus, for the first claim we obtain
\begin{align*}  
\E\abr{\sup_{|u|\leq 1}|Y_{\eps,x}(u)|^{2k}}&\leq C_k\E\abr{\bigg(\sup_{|u|\leq 1}Y_{\eps,x}(u)\bigg)^{2k}}
\\
&\leq C_k\int_0^\infty\P\rbr{\bigg(\sup_{|u|\leq 1}Y_{\eps,x}(u)\bigg)^{2k}>r}\dr
\\
&\leq C_k\int_0^\infty e^{-\frac{(v-E_Y)^2}{2\sigma}}v^{2k-1}\dv
\\
&<\infty
\end{align*}
where the constant $C_k $ is independent of $\eps$ and $x$. Note that since the supremum is positive we have simply solved the inequality inside the probability for the supremum and then changed the integration variable. The inequality leading to the last integral uses the corollary of the Borel-TIS inequality stated in one of the remarks following it.

Then consider the second claim. For $a<0$ we only need to prove the lower bound and for $a\geq 0$ we only need to prove the upper bound. First let $a<0$ and note that 
\[\E\abr{\sup_{|u|\leq 1}|Y_{\eps,x}(u)|}\leq 2\E\abr{|\sup_{|u|\leq 1}Y_{\eps,x}(u)|}=2\E\abr{\sup_{|u|\leq 1} Y_{\eps,x}(u)},\] 
because the supremum is positive since $Y_{\eps,x}(0)=0$. 
Then
\begin{align*}
\E\abr{e^{a\sup_{|u|\leq 1}|Y_{\eps,x}(u)|}}\geq e^{a\E[\sup_{|u|\leq 1}|Y_{\eps,x}(u)|]}\geq e^{2a\E[\sup_{|u|\leq 1} Y_{\eps,x}(u)]}>0
\end{align*}
by the above argument and previous estimates. The first inequality above is the Jensen inequality. 

For $a>0$ we have using similar manipulation as for the first claim
\begin{align*}
\E\abr{e^{a\sup_{|u|\leq 1}|Y_{\eps,x}(u)|}}&\leq\E\abr{e^{2a\sup_{|u|\leq 1}Y_{\eps,x}(u)}}
\\
&\leq  \int_0^\infty \P\rbr{e^{2a \sup_{|u|\leq 1}Y_{\eps,x}(u)}>r}\dr
\\
&\leq 2a\int_{\R}e^{-\frac{(v-E_Y)^2}{\sigma^2}}e^{2av}\dv
\\
&<\infty
\end{align*}
uniformly in $\eps$ and $x$. Above we used \Cref{eq: inequality for sup Y} and the properties mentioned below it and Cauchy-Schwarz.
\end{proof}

\subsection{Gaussian multiplicative chaos (GMC)}
\label{subsec:Gaussian multiplicative chaos}
As mentioned in the introduction, given a log-correlated Gaussian field $G$ we would like to define the multiplicative chaos as the exponential $e^{ \gamma G}$ for appropriate real parameters $\gamma$. However, since $G$ is a Schwartz distribution\footnote{Here by Schwartz distribution we mean an element of $\Dcal'$. These are often also called ordinary distributions and a Schwartz distribution might be used for an element of $\Scal'$, which we would call a tempered distribution.} the exponential is ill-defined. The standard definition is to consider the limit of the exponential of a suitable regularization or approximation $G_\eps$ of the log-correlated field suitably normalized and integrated against a reference measure. We will restrict the discussion to the case, where the reference measure is the Lebesgue measure and only briefly mention the definitions in the general case further below. In \Cref{sec:Generalization to more general reference measures than the Lebesgue measure} we will comment what parts of the arguments of the proofs need to be modified for more general reference measures.

We will be working with the so called sub-critical and critical GMC, that is, $\gamma\leq \sqrt{2d}$. In the sub-critical regime $\gamma<\sqrt{2d}$, to construct the GMC measures, we consider the limit
\begin{align*}
\lim_{\eps \to 0} \nu_{\gamma,\eps}(\dx):=\lim_{\eps\to 0}e^{\gamma G_\eps(x)-\frac{\gamma^2}{2}\E[(G_\eps(x))^2]}\dx.
\end{align*}
for an appropriate approximation $G_\eps$ of the log-correlated field $G$. Given a bounded domain  $D\subset\R^d$, this limit exists in probability in the weak-topology of convergence of measures with support in the domain $D$ for the convolution approximations \cite[Theorem 1.1]{Be17a}. Furthermore, the limit defines a non-trivial random measure. We will discuss more general results after we have stated the convergence results for $\star$-scale invariant fields that we need in this paper. 

In the critical case $\gamma=\gamma_c:=\sqrt{2d}$, the approximations of the GMC measure need to be renormalized to yield non-trivial result. There are two different renormalization schemes. The deterministic renormalization that we will use in this paper is the so called Seneta-Heyde renormalization. Up to deterministic multiplicative constant the other renormalization converges to the same measure. In the deterministic case we consider the limit
\begin{align*}
\lim_{\eps\to 0}\sqrt{\log(\eps^{-1})}\nu_{\gamma,\eps}(dx):=\lim_{\eps\to 0}\sqrt{\log(\eps^{-1})}e^{\gamma G_\eps(x)-\frac{\gamma^2}{2}\E[(G_\eps(x))^2]}\dx.
\end{align*} 
The deterministic renormalization converges weakly in probability on any compact $K\subset D$ for the class of general log-correlated Gaussian fields $G$ that have the decomposition $G=S+H$ with $S$ a $\star$-scale invariant field and $H$ an a.s. Hölder-continuous Gaussian field\cite[Theorem 5.3]{JuSaWe19a}. Recall that the decomposition requires that $g_G\in H_{loc}^{d+\delta}$ for all $\delta>0$. If the sub-critical GMC is constructed on $D$, then we work with $K\subset D$ in the critical case. At this stage we refer the reader to the review \cite{Po20a} and references therein for more details of the critical case.

The result that we need for the GMC measure corresponding to a $\star$-scale invariant field $S$ is the following.

\begin{theorem}
\label{thm: Existence and moments of star scale invariant GMC}
Let $S_\eps$ be the cut-off approximation of a $\star$-scale invariant Gaussian field, $\gamma\in[0,\sqrt{2d})$ and $f\in C_b(D)$. Then 
\begin{align*}
\int_{D}f(x)e^{\gamma S_\eps(x)-\frac{\gamma^2}{2}\E[(S_\eps(x))^2]}\dx:=\nu_{\gamma,\eps}(f) \overset{\eps\to 0}{\longrightarrow}\nu_{\gamma,S}(f)
\end{align*}
for all $f$ almost surely, that is, there exists a random measure $\nu_{\gamma,S}$ such that the measures $\nu_{\gamma,\eps}$ converge almost surely in the weak topology of the space of Radon measures supported on $D$. There also exists $q_c=q_c(\gamma)$ such that for every $q\in(-\infty,q_c)$ and $f\in C_c(D)$ with $f\geq 0$ and $\int f \dx >0$ we have
\begin{align*}
\E\abr{(\nu_\gamma(f))^q}<\infty.
\end{align*} 
Furthermore, if $\gamma=\gamma_c$, then we have
\begin{align*}
\sqrt{\log(\eps^{-1})}\mu_{\gamma_c,\eps}(f)\to\nu_{\gamma,S}(f)
\end{align*} 
in probability and the threshold for the existence of the moments is $q_c(\gamma_c)=1$.
\end{theorem}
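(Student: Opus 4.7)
The plan is to follow the now-standard Kahane-style martingale approach, exploiting the independence and scaling structure established in \Cref{prop: properties of the process Z}. The proof naturally splits into almost sure convergence in the sub-critical regime, positive and negative moment bounds yielding $q_c$, and a separate treatment of the critical Seneta--Heyde limit.

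First I would establish the almost sure weak convergence in the sub-critical regime. For fixed non-negative $f \in C_b(D)$, the process $\eps^{-1} \mapsto \nu_{\gamma,\eps}(f)$ is a non-negative martingale with respect to the filtration $\Fcal_\eps = \sigma(S_\delta : \delta \geq \eps)$: for $\delta < \eps$, the decomposition $S_\delta(x) = S_\eps(x) + (S_\delta(x) - S_\eps(x))$ has the second summand independent of $\Fcal_\eps$ by \Cref{prop: properties of the process Z}(1) (taking $u = 0$ in the definition of $Z$), and $\E[e^{\gamma(S_\delta(x)-S_\eps(x)) - \frac{\gamma^2}{2}\E[(S_\delta(x)-S_\eps(x))^2]}] = 1$ pointwise. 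Non-negative martingale convergence then yields an almost sure limit $\nu_{\gamma,S}(f)$. Applying this to a countable collection of non-negative $f$ whose linear span is uniformly dense in $C_c(D)$, together with a tightness bound on total mass on compact subsets, gives weak convergence of $\nu_{\gamma,\eps}$ to a random Radon measure on an event of full probability.

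Non-triviality and the threshold $q_c$ come from positive moment estimates. For $q > 1$, I would cut $D$ into $2^{nd}$ dyadic sub-boxes of side $2^{-n}$, use \Cref{prop: properties of the process Z}(2) to write the restriction of $S_\eps$ on each sub-box as an independent rescaled copy of $S_{\eps 2^n}$ plus a Gaussian coupling term of uniformly bounded variance, and iterate the elementary inequality $\E[(\sum_i X_i)^q] \leq \sum_i \E[X_i^q]$ valid for $q \in (1,2]$ on i.i.d. non-negative $X_i$. This Kahane-cascade argument yields $\sup_\eps \E[\nu_{\gamma,\eps}(f)^q] < \infty$ whenever $\gamma^2 q(q-1)/2 < d(q-1)$, i.e. $q < q_c := 2d/\gamma^2$, which exceeds $1$ precisely when $\gamma < \sqrt{2d}$; this simultaneously delivers uniform integrability, non-triviality of the limit, and the claimed moment bounds for $q \in (1, q_c)$. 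For $q \in (0,1]$ the Jensen inequality reduces the claim to $q = 0$. For negative moments, I would apply \Cref{thm:Borel TIS} to $S_\eps$ on a small closed ball $\overline{B_r(x_0)} \subset \{f > 0\}$: this gives $\P(\sup_{B_r(x_0)} S_\eps < -u) \leq e^{-(u - C)^2/(2\sigma^2)}$ with $\sigma^2 = \log(\eps^{-1}) + O(1)$, hence a super-polynomial lower tail for $\nu_{\gamma,\eps}(f)$ uniform in $\eps$, and Fatou's lemma transfers the bound to the limit.

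Finally, the critical case $\gamma = \gamma_c = \sqrt{2d}$ is the genuinely hard part and the main obstacle: the martingale $\nu_{\gamma_c,\eps}(f)$ converges almost surely to zero, so the sub-critical argument cannot be run directly. Here I would appeal to the Seneta--Heyde convergence for $\star$-scale invariant fields, established via the derivative martingale and barrier estimates in the references cited before the statement (notably Duplantier--Rhodes--Sheffield--Vargas and Junnila--Saksman--Webb), which gives convergence in probability of $\sqrt{\log(\eps^{-1})}\,\nu_{\gamma_c,\eps}(f)$ to a non-trivial random variable; the hypotheses of those theorems are automatic for our $S_\eps$ thanks to the exact scaling in \Cref{prop: properties of the process Z}(2). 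The threshold $q_c(\gamma_c) = 1$ follows because positive moments of order strictly greater than $1$ diverge at criticality (a direct variance calculation along the martingale, combined with the Kahane bound above), while finiteness up to order $q < 1$ is contained in the same critical references.
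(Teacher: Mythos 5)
The paper does not actually prove this theorem: it is stated and then supported entirely by citations (Kahane, Allez--Rhodes--Vargas for existence; Kahane and Robert--Vargas for the moment bounds; Duplantier--Rhodes--Sheffield--Vargas for the critical case). Your proposal supplies an honest proof sketch, which is therefore a genuinely different route, and the martingale argument for almost sure sub-critical convergence together with the deferral to the literature for the Seneta--Heyde critical limit are both sound. However, there are two concrete gaps in the moment bounds you outline.

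First, the inequality $\E\bigl[\bigl(\sum_i X_i\bigr)^q\bigr]\leq\sum_i\E[X_i^q]$ for non-negative i.i.d.\ $X_i$ and $q\in(1,2]$ is false: $x\mapsto x^q$ is convex and super-additive for $q>1$, so the inequality runs the other way (take $X_i\equiv 1$; the left side is $n^q$, the right is $n$). What the Kahane cascade actually uses is a von Bahr--Esseen or Rosenthal-type bound applied to the \emph{centred} summands, of the form $\E\bigl[\bigl(\sum_i X_i\bigr)^q\bigr]\leq C\bigl(\sum_i\E\bigl[|X_i-\E X_i|^q\bigr]+\bigl(\sum_i\E X_i\bigr)^q\bigr)$; the extra first-moment term does not vanish and the recursion across scales has to be closed with more care than your sketch suggests.

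Second, the negative-moment argument via a single-scale Borel--TIS estimate does not work. If you bound $\nu_{\gamma,\eps}(f)$ from below by $\inf_{B_r(x_0)}e^{\gamma S_\eps(x)-\frac{\gamma^2}{2}\E[S_\eps(x)^2]}$ times the Lebesgue mass of $B_r(x_0)\cap\{f>c\}$, then since $\E[S_\eps(x)^2]=\log(\eps^{-1})$ the exponent already contains the deterministic term $-\frac{\gamma^2}{2}\log(\eps^{-1})\to-\infty$: even conditionally on $\sup_{B_r}|S_\eps|=O(1)$, the pointwise density degenerates, so the lower-tail bound you extract this way becomes vacuous as $\eps\to 0$ rather than uniform in $\eps$. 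This is exactly why GMC is non-trivial --- the mass lives on thick points where $S_\eps\approx\gamma\log(\eps^{-1})$, far above typical. Finiteness of all negative moments genuinely requires the multi-scale argument of Robert and Vargas (their Proposition 3.6), which iterates a ``restrict to a good sub-ball, rescale, and repeat'' step across dyadic scales; it cannot be reproduced by a one-scale concentration estimate plus Fatou.
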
  
\noindent
The existence of the almost sure limit for the sub-critical case is proven in \cite{Ka85a, AlRhVa13a}. The existence of the positive moments for $q_c=2d/\gamma^2$ in the sub-critical case goes back to Kahane \cite{Ka85a} and the existence of the negative moments in this case is proved in \cite[Proposition 3.6]{RoVa10a}. The existence of the limit and the moments, and the equivalence with the non-deterministic renormalization called the derivative martingale renormalization in the critical case are proved in \cite[Theorem 5 and Corollary 6]{DuRhSh14a}.

The GMC measures can be constructed with respect to more general reference measures than just the Lebesgue measure. Below we will briefly discuss this more general framework. First we need the following definition.
\begin{definition}[$\alpha$-energy of a Radon measure]
\smallskip
\noindent
Let $D\subset \R^d$ be a bounded domain and $\mu$ a Radon measure on $D$. Then we call the intergral
\begin{align*}
\mathcal{E}_\mu(\alpha):=\int_{D\times D}|x-y|^{-\alpha}\mu\otimes\mu(\dx,\dy).
\end{align*}
the $\alpha$-energy of the measure $\mu$. Furthermore, we denote by $\tilde{d}(\mu)$ the supremum of all such $\alpha$ that $\Ecal_\mu(\alpha)<\infty$ 
\end{definition}

\begin{remark}
Note that for the Lebesque measure $\tilde{d}=d$, that is, the dimension of Euclidean space.
\end{remark}

In \cite{RoVa10a} and \cite{Be17a} it has been shown that for $\gamma<\sqrt{2\tilde{d}(\mu)}$ and a Radon measure $\mu$, with the generalized\footnote{General here meaning that we can take a convolution with respect to a Radon measure with certain properties, but not necessarily having a density with respect to the Lebesgue measure.}  convolution approximations of log-correlated fields $G_\eps$ the measures 
\begin{align*}
\nu_{\gamma,\eps,\mu}(\dx ) :=\exp(\gamma G_{\eps}(x)-(\gamma^2/2)\E[(G_\eps(x))^2])\mu(\dx )
\end{align*} 
converge in the space of Radon measures on $D$ with respect to the weak topology of convergence of measures and the limit is non-trivial and itself is a random Radon measure. Moreover, the limit is independent of the convolution kernel. More general uniqueness results are given in \cite{JuSa17a} and \cite{Sh16a}. In \cite{RoVa10a} the stochastic convergence is in law and in \cite{Be17a} in probability. 

Before discussing the scaling properties of GMC measures associated to $\star$-scale invariant fields, we state the following useful lemma about the convergence of random measures multiplied by random continuous functions.

\begin{lemma}
\label{lemma: Convergence of random measures multiplied by a random function}
Let $\{\mu_n\}_{n\in\N}$ be a sequence of locally finite random measures on a bounded domain $D\subset\R^d$ converging a.s. to a locally finite random measure $\mu$ in the following sense:
let $K\subset D$ be compact, then
\begin{align*}
\P\rbr{\mu_n(f)\to\mu(f), \,\, \forall f\in C(K)}=1.
\end{align*} 
Also, suppose that $\{g_n\}_{n\in\N}$ is a sequence of a.s. continuous random functions such that
\begin{align*}
\P\rbr{g_n \to g \text{ in } C(K)}=1.
\end{align*}
Then 
\begin{align*}
\P\rbr{\mu_n(g_nf)\to\mu(gf),\,\, \forall f\in C(K)}=1.
\end{align*}
\end{lemma}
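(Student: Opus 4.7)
The plan is to reduce the statement to a deterministic (pathwise) approximation argument on the probability-one event where both convergence hypotheses hold simultaneously, and then use a standard split of the difference into two pieces controlled by the uniform convergence of $g_n$ and the vague convergence of $\mu_n$ respectively.

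More precisely, I would first fix a compact set $K\subset D$ and consider the intersection $\Omega_0$ of the two almost-sure events: the event that $\mu_n(f)\to\mu(f)$ for every $f\in C(K)$, and the event that $g_n\to g$ uniformly on $K$. Since both are probability-one events, so is $\Omega_0$. For each $\omega\in\Omega_0$, fix $f\in C(K)$ and decompose
\begin{align*}
\mu_n(g_n f)-\mu(gf)=\mu_n\bigl((g_n-g)f\bigr)+\bigl[\mu_n(gf)-\mu(gf)\bigr].
\end{align*}
For the second bracket, observe that for the fixed $\omega\in\Omega_0$ the function $g(\omega)\cdot f$ lies in $C(K)$ (as the uniform limit of continuous functions times a continuous function), so the hypothesis on $\mu_n\to\mu$ applied to this particular continuous test function gives $\mu_n(gf)\to\mu(gf)$.

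For the first term, the straightforward pointwise bound
\begin{align*}
\bigl|\mu_n\bigl((g_n-g)f\bigr)\bigr|\le \|g_n-g\|_{\infty,K}\,\|f\|_{\infty,K}\,\mu_n(K)
\end{align*}
reduces matters to checking that $\mu_n(K)$ stays bounded and that $\|g_n-g\|_{\infty,K}\to 0$. The latter is immediate from the hypothesis on $g_n$. For the former, since $K$ is compact, I would pick any $\varphi\in C_c(D)$ with $\varphi\equiv 1$ on $K$ and $0\le\varphi\le 1$; then $\mu_n(K)\le\mu_n(\varphi)\to\mu(\varphi)<\infty$, so the sequence $\mu_n(K)$ is bounded. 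Combining both estimates gives $\mu_n(g_n f)\to\mu(gf)$ for every $f\in C(K)$ on the event $\Omega_0$.

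The only subtle point is that $gf$ is random through $g$, so one cannot directly invoke vague convergence as a statement about a fixed deterministic test function. This is precisely why the proof is done \emph{pathwise} on $\Omega_0$: for each realization $\omega\in\Omega_0$, $g(\omega)f$ is a fixed continuous function on $K$, and the weak convergence of the (realized) measures $\mu_n(\omega)$ to $\mu(\omega)$ holds against every such test function by hypothesis. No other difficulty arises; the argument is really an $\eps/2$-type splitting, with the uniform convergence of $g_n$ handling the variation of the integrand and the weak convergence of $\mu_n$ handling the integration.
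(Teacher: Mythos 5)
Your proof is correct and is in fact a streamlined version of the paper's argument. Both use the same decomposition
\begin{align*}
\mu_n(g_nf)-\mu(gf)=\mu_n\bigl((g_n-g)f\bigr)+\bigl(\mu_n(gf)-\mu(gf)\bigr)
\end{align*}
and the same estimate $\abs{\mu_n((g_n-g)f)}\le\norm{g_n-g}_{\infty}\norm{f}_{\infty}\mu_n(K)$. Where the two proofs differ is in what happens next. The paper's proof concludes from this only that $\mu_n(g_nf)\to\mu(gf)$ almost surely \emph{for each fixed $f$}, and then invokes separability of $C(K)$ to pass from a countable dense family $\Fcal\subset C(K)$ to all of $C(K)$ via an additional $\eps/3$-estimate. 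Your observation sidesteps that step: because the hypothesis places ``$\forall f\in C(K)$'' \emph{inside} the probability, the probability-one event $\Omega_0$ on which you work is independent of $f$, and on that event $g(\omega)$ is continuous (uniform limit of the continuous $g_n(\omega)$), so $g(\omega)f\in C(K)$ and the hypothesis applies directly to the test function $g(\omega)f$. The conclusion for all $f\in C(K)$ simultaneously therefore comes out of the pathwise argument for free, and the separability/density step in the paper is redundant. This is a genuine simplification.

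One small slip in your write-up: you bound $\mu_n(K)\le\mu_n(\varphi)$ for a cutoff $\varphi\in C_c(D)$ with $\varphi\equiv 1$ on $K$, but the convergence hypothesis as stated applies to test functions in $C(K)$, not $C_c(D)$, so the convergence (or even boundedness) of $\mu_n(\varphi)$ is not immediately covered. The paper's choice is cleaner and stays within the hypothesis: take $f\equiv 1\in C(K)$, so that $\mu_n(K)=\mu_n(1)\to\mu(1)=\mu(K)$, and hence $\sup_n\mu_n(K)<\infty$ on $\Omega_0$. With this adjustment your argument is complete.
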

Furthermore, as noted after \Cref{thm:field from the measure general log correlated Gaussian case} our assumptions guarantee that we are always working on some compact subset of $D$ so that we can use this lemma. 

\begin{proof}
Fix $f\in  C(K)$. Then we have 
\begin{align*}
|\mu_n(g_nf)-\mu(gf)|&\leq |\mu_n([g_n-g]f)|+|(\mu_n-\mu)(gf)|
\\
&\leq \norm{f}_\infty\norm{g_n-g}_{\infty}\mu_n(K)
\\
&\quad+|(\mu_n-\mu)(gf)|.
\\
&<\eps
\end{align*}
for any $\eps>0$ and large enough $n$. Note that $\mu_n(K)$ converges by taking $f=1\in C(K)$ and noting that converging real sequence is bounded. Thus, for a fixed test function $f$, we have $\mu_n(g_nf)\to \mu(gf)$ almost surely. Next we use separability of $C(K)$. There exists a countable dense subset $\Fcal\subset C(K)$. By countability, we have
\begin{align*}
\P\rbr{\mu_n(g_nf)\to \mu (gf),\,\,\forall f\in \Fcal}=1.
\end{align*}
Then given $f\in C(K)$ let us pick $\{f_k\}_{k\in\N}\subset \Fcal$ such that $f_k\to f$ in $C(K)$ as $k\to \infty$. We have
\begin{align*}
|\mu_n(g_nf)-\mu(gf)|&\leq |\mu_n(g_nf)-\mu_n(g_nf_k)|+|\mu_n(g_nf_k)-\mu(gf_k)|
\\
&\quad+|\mu(gf_k)-\mu(gf)|
\\
&\leq \|f-f_k\|_\infty(\sup_n(\|g_n\|_\infty)\sup_n(\mu_n(K))+\|g\|_\infty\mu(K)))+|\mu_n(g_nf_k)-\mu(gf_k)|,
\end{align*} 
Given $\eps>0$ we can always first choose $k$ such that the first term is less than $\eps/2$ and then $n\equiv n(k)$ so large that the second term is also less than $\eps/2$ almost surely by the first part of the proof. This implies the result. 
\end{proof}

\subsubsection{Properties of the $\star$-scale invariant GMC measures}
\label{sec:Properties of multiplicative chaos}
The properties of the $\star$-scale invariant GMC measure presented in this section are no doubt known or at least not surprising to an expert in this field, but in the absence of a reference in the form we need them, we also present proofs. In this section, we will omit the index corresponding to the underlying field from the GMC measure. The scaling properties of the fields in  \Cref{sec:Scaling properties of star-scale invariant fields} translate to the following properties on the level of GMC measures. Let $K\subset D$ be compact and such that $\dist(K,\partial D)\geq \eps_0$ for some $\eps_0\in (0,1)$, $\eta\in C_c(\R^d)$ with $\supp(\eta)\subset B_1(0)$, $\eps\in (0,\eps_0)$  and denote $\eta_{\eps}(x)=\eps^{-d}\eta(x/\eps)$. Furthermore, define $C_\gamma(\delta)=1$ if $\gamma<\sqrt{2d}$ and $C_\gamma(\delta)=\sqrt{\log(\delta^{-1})}$ if $\gamma=\gamma_c=\sqrt{2d}$. Then we have for $x\in K$ the following fundamental representation that turns out to be crucial in the proofs of our main results:
\begin{align}
\label{eq: definition of nuGammaEpsx}
\int_{\R^d}&\eta_\eps(y-x)\nu_{\gamma}(\dy )\nonumber
\\
=&\lim_{\delta\to 0}C_\gamma(\delta)\int_{\R^d}\eta_\eps(y-x)e^{\gamma S_{\delta}(y)-\frac{\gamma^2}{2}\log(\delta^{-1})}\dy \nonumber
\\
=&\lim_{\delta\to 0}C_\gamma(\delta)\int_{\R^d}\eta(u)e^{\gamma S_\delta(x+\eps u)-\frac{\gamma^2}{2}\log(\delta^{-1})}\du  \nonumber
\\
=&e^{\gamma S_\eps(x)-\frac{\gamma^2}{2}\log(\eps^{-1})}\nonumber
\\
&\quad\times\lim_{\delta\to 0}C_\gamma(\delta)\int_{\R^d} \eta(u)e^{\gamma [S_\eps(x+\eps u)-S_\eps(x)]}e^{\gamma [S_\delta(x+\eps u)-S_\eps(x+\eps u)]-\frac{\gamma^2}{2}\log(\abr{\frac{\delta}{\eps}}^{-1})}\du  \nonumber
\\
=&e^{\gamma S_\eps(x)-\frac{\gamma^2}{2}\log(\eps^{-1})}\lim_{\delta\to 0}C_\gamma(\delta/\eps)(1+o(1))\int_{\R^d}\eta(u)e^{\gamma Y_{\eps,x}(u)}e^{\gamma Z_{(\eps,\delta,x)}(u)-\frac{\gamma^2}{2}\E[(Z_{(\eps,\delta,x)}(u))^2]}\du 
\nonumber
\\
:=&e^{\gamma S_\eps(x)-\frac{\gamma^2}{2}\log(\eps^{-1})}\int_{\R^d} \eta(u)e^{\gamma Y_{\eps,x}(u)}\nu_{\gamma}^{\eps, x}(\du),
\end{align}
where we have used the auxiliary fields $Y_{\eps,x}$ and $Z_{(\eps,\delta,x)}$ defined in \Cref{eq:Def Y,eq: Def Z} respectively. The limit $\delta \to 0$  exists for fixed $\eps$ by \Cref{lemma: Convergence of random measures multiplied by a random function} above since $\delta/\eps\to 0$ as $\delta\to 0$. 
Note that if $\gamma=\gamma_c$,
to use \Cref{lemma: Convergence of random measures multiplied by a random function} we need to work with some subsequence since the limit constructing the critical GMC measure only exists in probability. The measure $\nu_\gamma^{\eps,x}$ is independent of $S_\eps$ and $Y_{\eps,x}$, and distributed like $\nu_{\gamma,B_1(0)}$, that is, a GMC measure corresponding to $S$ restricted to the unit ball. This follows directly from properties 1. and 2. in \Cref{prop: properties of the process Z}. Indeed, at the level of approximations, $\nu_{\gamma}^{\eps,x}(\du )$ is given by
\begin{align*}
C_\gamma(\delta/\epsilon)e^{\gamma Z_{(\eps,\delta,x)}(u)-\frac{\gamma^2}{2}\E[(Z_{(\eps,\delta,x)}(u))^2]}\du .
\end{align*}
This is independent of $S_\eps$, because $Z_{(\eps,\delta,x)}$ is, and it is distributed like
\begin{align*}
C_\gamma(\delta/\eps)e^{\gamma S_{\frac{\delta}{\eps}}(u)-\frac{\gamma^2}{2}\E[(S_{\frac{\delta}{\eps}}(u))^2]}\du ,
\end{align*}
which is again distributed like $\nu_{\gamma, B_1(0)}$ 
for any fixed $\eps>0$ in the limit $\delta\to 0$. The fact that $Z_{(\eps,\delta,x)}$ is distributed like $S_{\delta/\eps}$ also dictates that the correct renormalization constant above is $C_{\gamma}(\delta/\eps)$ instead of just $C_\gamma(\delta)$. Note that if $\supp(\eta_\eps(\cdot-x))\subset D$, where $D$ is such that $\nu_\gamma$ has support in $D$, then under the assumptions $\nu_\gamma^{\eps,x}$ is well-defined on $B_1(0)$.

Similarly, we could do this simultaneously for two fixed points of $x,x'\in K$. Then 
\begin{align*}
\bigg( \int_{\R^d} \eta_\eps(x-y)\nu_\gamma &(\dy), \int_{\R^d} \eta_\eps(x'-y)\nu_\gamma (\dy)\bigg)
\\
&=\bigg(e^{\gamma S_\eps(x)-\frac{\gamma^2}{2}\log(\eps^{-1})}\int_{\R^d} \eta(u)e^{\gamma Y_{\eps,x}(u)}\nu_{\gamma}^{\eps, x}(\du),
\\
&\quad \quad e^{\gamma S_\eps(x')-\frac{\gamma^2}{2}\log(\eps^{-1})}\int_{\R^d} \eta(u)e^{\gamma Y_{\eps, x'}(u)}\nu_{\gamma}^{\eps, x'}(\du)\bigg),
\end{align*}
where both $\nu_\gamma^{\eps,x}$ and $\nu_\gamma^{\eps,x'}$ are distributed like $\nu_{\gamma,B_1(0)}$ and independent of $S_\eps$. Furthermore, if $x$ and $x'$ are not too close, then $\nu_\gamma^{\eps,x}|_{\supp(\eta)}$ and $\nu_\gamma^{\eps,x}|_{\supp(\eta)}$ are independent. More precisely, we have the following proposition.

\begin{proposition}
\label{prop: independence of nu eps x and nu eps x'}
Let $K\subset D$ compact and satisfying $\dist(K,\partial D)\geq \eps_0$ for some $\eps_0\in (0,1)$. Also let $\eps\in (0,\eps_0)$ be small enough so that $\supp(\eta_\eps(\cdot-w))\subset D$ for $w=x,x'$ with $w\in K$ and $|x-x'|\geq 3\eps$. Then the restrictions of $\nu_\gamma^{\eps,x}$ and $\nu_\gamma^{\eps,x'}$ to $\supp(\eta)\subset B_1(0)$ are independent. $\nu_\gamma^{\eps,x}$ was defined in \Cref{eq: definition of nuGammaEpsx}.
\end{proposition}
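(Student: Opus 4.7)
The plan is to verify independence at the level of the Gaussian fields $Z_{(\eps,\delta,x)}$ and $Z_{(\eps,\delta,x')}$ used in the construction of $\nu_\gamma^{\eps,x}$ and $\nu_\gamma^{\eps,x'}$, and then pass this independence through the limit $\delta\to 0$.

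First, recall from \Cref{eq: definition of nuGammaEpsx} that $\nu_\gamma^{\eps,x}$ is the $\delta\to 0$ limit (almost surely in the sub-critical case, in probability along a subsequence in the critical case) of the measures $C_\gamma(\delta/\eps)\exp(\gamma Z_{(\eps,\delta,x)}(u) - \tfrac{\gamma^2}{2}\E[Z_{(\eps,\delta,x)}(u)^2])\du$, and analogously for $\nu_\gamma^{\eps,x'}$. Since everything is Gaussian, I only need to compute the cross-covariance. For $u,v\in\supp(\eta)\subset\overline{B_1(0)}$ and $\delta<\eps$,
\begin{align*}
\Cov(Z_{(\eps,\delta,x)}(u),Z_{(\eps,\delta,x')}(v)) &= K_{\delta}(x+\eps u,x'+\eps v) - 2K_{\eps}(x+\eps u,x'+\eps v) + K_{\eps}(x+\eps u,x'+\eps v)\\
&= \int_{\eps^{-1}}^{\delta^{-1}} k_1\bigl(s|x-x'+\eps(u-v)|\bigr)\frac{\ds}{s},
\end{align*}
where I have used that $\eps^{-1}\wedge\delta^{-1}=\eps^{-1}$ so the mixed kernels reduce to $K_\eps$.

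The second step is to show this covariance vanishes under the hypothesis $|x-x'|\geq 3\eps$. Since $|u|,|v|\leq 1$, the triangle inequality yields
\begin{align*}
|x-x' + \eps(u-v)| \geq |x-x'| - \eps|u-v| \geq 3\eps - 2\eps = \eps.
\end{align*}
Therefore, for every $s\in[\eps^{-1},\delta^{-1}]$ we have $s|x-x'+\eps(u-v)|\geq 1$, and since $\supp(k_1)\subset[0,1)$ the integrand is identically zero. Hence the covariance vanishes for all such $u,v$, and by Gaussianity the two processes $(Z_{(\eps,\delta,x)}(u))_{u\in\supp(\eta)}$ and $(Z_{(\eps,\delta,x')}(v))_{v\in\supp(\eta)}$ are independent for every $\delta\in(0,\eps)$.

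The final step is to promote this to independence of the limiting measures. The approximating Radon measures on $\supp(\eta)$ that build $\nu_\gamma^{\eps,x}$ and $\nu_\gamma^{\eps,x'}$ are measurable functions of these two independent Gaussian families, hence are themselves independent for each fixed $\delta$. Independence is preserved under the limit $\delta\to 0$ (in the sub-critical case almost surely by \Cref{thm: Existence and moments of star scale invariant GMC}; in the critical case in probability along a suitable subsequence, after which one extracts a further a.s.\ convergent subsequence by the usual diagonal argument), and \Cref{lemma: Convergence of random measures multiplied by a random function} guarantees that the convergence takes place in the weak topology of Radon measures on $\supp(\eta)$. This yields the claimed independence of the restrictions $\nu_\gamma^{\eps,x}|_{\supp(\eta)}$ and $\nu_\gamma^{\eps,x'}|_{\supp(\eta)}$. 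No serious obstacle is expected; the only mild subtlety is the critical case, where one must argue along subsequences rather than use almost-sure convergence directly.
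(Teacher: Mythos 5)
Your proof is correct and follows essentially the same route as the paper's: expand the cross-covariance of $Z_{(\eps,\delta,x)}$ and $Z_{(\eps,\delta,x')}$ using the kernels $K_\delta,K_\eps,K_{(\delta,\eps)},K_{(\eps,\delta)}$, observe the mixed kernels collapse to $K_\eps$ when $\delta<\eps$, and then use $\supp(k_1)\subset[0,1)$ together with the triangle-inequality bound $|x-x'+\eps(u-v)|\geq\eps$ to kill the integrand on $[\eps^{-1},\delta^{-1}]$. The only cosmetic difference is that the paper performs a change of variables $s'=\eps s$ before invoking the support condition, while you bound $s|x-x'+\eps(u-v)|\geq 1$ directly; these are the same estimate.
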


\begin{proof}
Let $f,g\in C_c(B_1(0))$ or equivalently we could take $f,g\in C_c(\supp(\eta))$ and look at $\nu_\gamma^{\eps,x}(f)$ and $\nu_\gamma^{\eps,x}(g)$ and we do so to keep the notation simple. These in turn are independent if the approximations are. We define
\begin{align*}
\nu_{\gamma,\delta}^{\eps,x}(f):=C_\gamma(\delta/\eps)\int_{\R^d}f(u)e^{\gamma Z_{(\eps,\delta,x)}(u)-\frac{\gamma^2}{2}\E[(Z_{\eps,\delta,x}(u))^2]}\du.
\end{align*} 

Next note that the fields $Z_{(\eps,\delta,x)}$ and $Z_{(\eps,\delta,x')}$ are independent. Indeed, we have
\begin{align*}
\Cov(Z_{(\eps,\delta,x)}(u),Z_{(\eps,\delta,x')}(v))&=K_{\delta}(x-\eps u,x'-\eps v) 
\\
&\quad+ K_{\eps}(x-\eps u,x'-\eps v)
\\
&\quad-K_{(\delta,\eps)}(x-\eps u,x'-\eps v) 
\\
&\quad - K_{(\eps,\delta)}(x-\eps u,x'-\eps v).
\end{align*}
Noting that $\delta<\eps$ the above becomes
\begin{align*}
\int_{\eps^{-1}}^{\delta^{-1}}k_1(s|x-x'+\eps(u-v)|)\frac{\ds}{s}\overset{s'=\eps s}{=}\int_1^{\frac{\eps}{\delta}}k_1\left(s'\left|\frac{x-x'}{\eps}+u-v\right|\right)\frac{\ds'}{s'}.
\end{align*}
Now $s>1$ and $|\eps^{-1}(x-x')+(u-v)|\geq \eps^{-1}|x-x'|-|u-v|\geq 3-(|u|-|v|)>1$ so that the covariance is zero by the fact that $\supp(k_1)\subset [0,1]$. Since the fields $Z_{(\eps,\delta,x)}$ and $Z_{(\eps,\delta,x')}$ are independent under the assumption, also $\nu_{\gamma,\delta}^{\eps,x}(f)$ and $\nu_{\gamma,\delta}^{\eps,x'}(g)$ are. 
\end{proof}

\section{Proofs of the main results}
\label{sec:Proofs of the main results}
In this section we present the proofs of the main theorems. To prove \Cref{thm:field from the measure general log correlated Gaussian case} we need similar result for the $\star$-scale invariant fields $S$. These fields were defined in \Cref{sec:star scale invariant fields}. We will state it here and prove it at the end of this section after we have proved our main theorems assuming it.

\begin{proposition}
\label{thm:field from the measure star scale invariant case}
Let $\nu_{\gamma,S}$ be a GMC measure constructed from a $\star$-scale invariant Gaussian field $S$. Let $\eta\in C_c(\R^d)$ with $\eta\geq 0$, $\int\eta dx=1$ and $\supp(\eta)\subset B_1(0)$, and $\gamma\in [0,\sqrt{2d}]$. Then there exists $\eps_{0}\in (0,1)$ such that for $0<\eps<\eps_0$ there exists a deterministic function $F_{\gamma,\eps,\eta}(x)$ such that for any $\psi \in C_c^\infty(D)$ with $\dist(\supp(\psi),\partial D)>\eps_0$ we have
\begin{align*}
\int_{D}\psi(x)\rbr{\frac{1}{\gamma}\log\abr{\int_{\R^d}\eta_{\eps}(y-x)\nu_{\gamma,S}(\dy )}-F_{\gamma,\eps,\eta}(x)}dx\overset{\eps\to 0}{\longrightarrow} \innerp{S}{\psi}
\end{align*}
in probability. Above $\eta_\eps(\cdot):=\eps^{-d}\eta(\cdot /\eps)$. 

\end{proposition}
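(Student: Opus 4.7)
The plan is to start from the multiplicative representation in \Cref{eq: definition of nuGammaEpsx}, which gives the almost sure pointwise identity
\[
\int_{\R^d}\eta_{\eps}(y-x)\nu_{\gamma,S}(\dy )=e^{\gamma S_\eps(x)-\frac{\gamma^2}{2}\log(\eps^{-1})}\,M_{\eps,x},\qquad M_{\eps,x}:=\int_{\R^d}\eta(u)e^{\gamma Y_{\eps,x}(u)}\nu_\gamma^{\eps,x}(\du),
\]
with $\nu_\gamma^{\eps,x}\overset{d}{=}\nu_{\gamma,B_1(0)}$ independent of $\sigma(S_\eps)$. I take the deterministic counter-term
\[
F_{\gamma,\eps,\eta}(x):=-\frac{\gamma}{2}\log(\eps^{-1})+\frac{1}{\gamma}\E\abr{\log M_{\eps,x}},
\]
which, by translation invariance of the $\star$-scale covariance (making the law of $Y_{\eps,x}(\cdot)$ on $\overline{B_1(0)}$ independent of $x$), is in fact constant in $x$. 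Applying $(1/\gamma)\log$ to the representation then yields
\[
\frac{1}{\gamma}\log\abr{\int_{\R^d}\eta_\eps(y-x)\nu_{\gamma,S}(\dy )}-F_{\gamma,\eps,\eta}(x)=S_\eps(x)+\frac{1}{\gamma}\bigl(\log M_{\eps,x}-\E\log M_{\eps,x}\bigr).
\]

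Integrating against $\psi$, the first summand $\innerp{S_\eps}{\psi}$ converges to $\innerp{S}{\psi}$ in $L^2$ (hence in probability) by the $L^2$-bounded martingale argument recalled in \Cref{sec:The cut off approximation of the star scale invariant fields}. It therefore suffices to prove that
\[
J_\eps:=\int_D\psi(x)\bigl(\log M_{\eps,x}-\E\log M_{\eps,x}\bigr)\dx
\]
converges to $0$ in probability, which I will do by a Chebyshev / second-moment argument. Introduce the functional $\phi(y):=\E\abr{\log\int_{\R^d}\eta(u)e^{\gamma y(u)}\tilde\nu(\du)}$ on $y\in C(\overline{B_1(0)})$, with $\tilde\nu\overset{d}{=}\nu_{\gamma,B_1(0)}$ independent of $y$; a short ratio/Jensen estimate shows that $\phi$ is $\gamma$-Lipschitz in $\|\cdot\|_\infty$. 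By \Cref{prop: independence of nu eps x and nu eps x'}, for $|x-x'|\geq 3\eps$ the chaoses $\nu_\gamma^{\eps,x}$ and $\nu_\gamma^{\eps,x'}$ are independent (and both independent of $\sigma(S_\eps)$); conditioning on $\sigma(S_\eps)$ and using Fubini therefore gives the key factorization
\[
\Cov(\log M_{\eps,x},\log M_{\eps,x'})=\Cov\bigl(\phi(Y_{\eps,x}),\phi(Y_{\eps,x'})\bigr)\qquad\text{whenever } |x-x'|\geq 3\eps.
\]
The uniform bound $\E\abr{(\log M_{\eps,x})^2}\leq C$ follows from $|\log M_{\eps,x}|\leq \gamma\|Y_{\eps,x}\|_\infty+|\log \nu_\gamma^{\eps,x}(\eta)|$ combined with \Cref{lemma: even moments of Y} and the positive and negative log-moments of $\nu_{\gamma,B_1(0)}(\eta)$ implied by \Cref{thm: Existence and moments of star scale invariant GMC} (valid throughout $\gamma\in[0,\sqrt{2d}]$; note $q_c(\gamma_c)=1$ still controls $\log$-moments).

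The main obstacle is then the off-diagonal covariance decay. For any fixed $x\neq x'$, \Cref{prop: convergence of Yx and Yx'} gives joint convergence in distribution $(Y_{\eps,x},Y_{\eps,x'})\Rightarrow(Y^1,Y^2)$ in $C(\overline{B_1(0)})^2$ with $Y^1,Y^2$ independent, so by continuity of $\phi$ together with the uniform $L^2$ bound (which yields uniform integrability of both $\phi(Y_{\eps,x})$ and $\phi(Y_{\eps,x})\phi(Y_{\eps,x'})$) one gets $\E\phi(Y_{\eps,x})\to\E\phi(Y^1)$ and $\E\abr{\phi(Y_{\eps,x})\phi(Y_{\eps,x'})}\to\E\phi(Y^1)\E\phi(Y^2)$, i.e.
\[
\Cov\bigl(\phi(Y_{\eps,x}),\phi(Y_{\eps,x'})\bigr)\overset{\eps\to 0}{\longrightarrow} 0\qquad\text{pointwise on}\ \{x\neq x'\}.
\]
Combined with the uniform a-priori bound $|\Cov(\log M_{\eps,x},\log M_{\eps,x'})|\leq C$ and the fact that the near-diagonal set $\{(x,x')\in\supp(\psi)^2:|x-x'|<3\eps\}$ has Lebesgue measure $\bigO(\eps^d)$, the dominated convergence theorem applied to
\[
\Var(J_\eps)=\int\int\psi(x)\psi(x')\Cov(\log M_{\eps,x},\log M_{\eps,x'})\dx \df x'
\]
yields $\Var(J_\eps)\to 0$. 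Chebyshev's inequality then gives $J_\eps\to 0$ in probability, completing the proof.
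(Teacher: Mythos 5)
Your proposal is correct and follows the same overall strategy as the paper: rewrite the observable via the fundamental representation \Cref{eq: definition of nuGammaEpsx} as $S_\eps(x)$ plus a centred remainder, handle $\innerp{S_\eps}{\psi}\to\innerp{S}{\psi}$ by the $L^2$-martingale argument, and reduce the remainder to a second-moment/Chebyshev estimate driven by the pointwise decay of $\Cov(\log M_{\eps,x},\log M_{\eps,x'})$, which in turn rests on \Cref{prop: independence of nu eps x and nu eps x'}, \Cref{prop: convergence of Yx and Yx'}, \Cref{lemma: even moments of Y}, and \Cref{thm: Existence and moments of star scale invariant GMC}. Two points are worth flagging.

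\textbf{A genuine (small) improvement.} Your conditioning step, introducing the functional $\phi(y)=\E[\log\int\eta(u)e^{\gamma y(u)}\tilde\nu(\du)]$ and using the tower property to write $\Cov(\log M_{\eps,x},\log M_{\eps,x'})=\Cov(\phi(Y_{\eps,x}),\phi(Y_{\eps,x'}))$ for $|x-x'|\geq 3\eps$, is a clean shortcut. The paper instead works directly with $A_\eps(x)=\gamma^{-1}\log M_{\eps,x}$, verifies joint convergence in distribution of $(A_\eps(x),A_\eps(x'))$ via characteristic functions and the continuous mapping theorem (checking a.s.\ continuity of the map $f\mapsto\int\eta e^{\gamma f}\mu$ and that the argument of the $\log$ is a.s.\ positive), and then upgrades to convergence of covariances by uniform integrability. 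Your $\phi$-based factorization absorbs the $\nu_\gamma^{\eps,\cdot}$ dependence into a deterministic Lipschitz functional, so you only need weak convergence of the pair $(Y_{\eps,x},Y_{\eps,x'})$, which is exactly \Cref{prop: convergence of Yx and Yx'}; this is more economical. You also correctly record that $F_{\gamma,\eps,\eta}$ is constant in $x$, and your counter-term $-\tfrac{\gamma}{2}\log(\eps^{-1})+\gamma^{-1}\E\log M_{\eps,x}$ has the correct exponent on the logarithmic part (the corresponding display in the paper's proof writes $-\tfrac{\gamma^2}{2}\log(\eps^{-1})$, which is inconsistent with the algebra $\tfrac1\gamma\log(\dots)=S_\eps(x)-\tfrac\gamma2\log\eps^{-1}+\tfrac1\gamma\log M_{\eps,x}$ and appears to be a typo).

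\textbf{One imprecision to repair.} The claim that the uniform $L^2$ bound on $\log M_{\eps,x}$ ``yields uniform integrability of $\phi(Y_{\eps,x})\phi(Y_{\eps,x'})$'' is not literally correct: $L^2$ bounds on each factor only give an $L^1$ bound on the product via Cauchy--Schwarz, which is not enough for UI. You should either (i) upgrade to an $L^4$ bound on $\log M_{\eps,x}$ (hence on $\phi(Y_{\eps,\cdot})$ by Jensen), which is available from the same ingredients (\Cref{lemma: even moments of Y}, \Cref{lemma:logarithmic estimate}, \Cref{thm: Existence and moments of star scale invariant GMC}) and is what the paper does, or (ii) use your own Lipschitz bound $|\phi(y)|\le\gamma\|y\|_\infty+|\E\log\tilde\nu(\eta)|$ directly: then $|\phi(Y_{\eps,x})\phi(Y_{\eps,x'})|$ is dominated by $(\gamma\sup|Y_{\eps,x}|+C_0)(\gamma\sup|Y_{\eps,x'}|+C_0)$, whose moments of every order are uniformly bounded by \Cref{lemma: even moments of Y}, giving the needed UI of the product immediately. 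Either fix is one line; with it in place, your argument is complete.
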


\subsection{Proofs of \Cref{thm:field from the measure general log correlated Gaussian case,thm:field from the measure for mildly non-Gaussian case} assuming \Cref{thm:field from the measure star scale invariant case}}

\label{sec:Proofs of theorems A and B assuming prop 31}
Given \Cref{thm:field from the measure star scale invariant case}, we can now prove \Cref{thm:field from the measure general log correlated Gaussian case}. Our main tool is the decomposition $G=S+H$ mentioned in the introduction and established in \cite[Theorem A]{JuSaWe19a}. Recall that $S$ is a $\star$-scale invariant field and $H$ is an almost surely Hölder-continuous Gaussian field defined in the same probability space as $S$. Recall also that the fields of the decomposition are not necessarily independent. 
\begin{proof}[Proof of \Cref{thm:field from the measure general log correlated Gaussian case}]
We begin with the following lemma.

\begin{lemma}
\label{lemma:the absolute conituity of muX and muY}
Let $G$ be a log-correlated Gaussian field with the decomposition $G=S+H$ described above. Also let $f\in C_c(D)$. Then, the corresponding GMC measure satisfies
\begin{align*}
\nu_{\gamma,G}(f)=\int_{\R^d}f(u)e^{\gamma H(u)-\frac{\gamma^2}{2}[g_G(u,u)-g_S(u,u)]}\nu_{\gamma,S}(\du )
\end{align*} 
\end{lemma}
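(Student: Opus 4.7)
The plan is to use the continuous approximation $G_\eps(x) := S_\eps(x) + H(x)$, where $S_\eps$ is the $\star$-scale cut-off from \Cref{sec:The cut off approximation of the star scale invariant fields}, in the Wick-normalized form
\[ \nu_{\gamma,G,\eps}(\dx) := C_\gamma(\eps)\,e^{\gamma G_\eps(x)-\frac{\gamma^2}{2}\E[G_\eps(x)^2]}\dx, \]
with $C_\gamma(\eps)$ as in \Cref{sec:Properties of multiplicative chaos}. By \cite[Theorem 5.3]{JuSaWe19a} at criticality and Kahane-type universality subcritically, these measures converge to $\nu_{\gamma,G}$ in the appropriate stochastic sense. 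The key calculation uses $\E[S_\eps(x)^2]=\log(\eps^{-1})$ exactly, so the variance factorizes as $\E[G_\eps(x)^2]=\log(\eps^{-1})+2\E[S_\eps(x)H(x)]+\E[H(x)^2]$, producing
\[ \nu_{\gamma,G,\eps}(\dx) = \Phi_\eps(x)\cdot C_\gamma(\eps)\,e^{\gamma S_\eps(x)-\frac{\gamma^2}{2}\log(\eps^{-1})}\dx,\quad \Phi_\eps(x) := \exp\rbr{\gamma H(x)-\tfrac{\gamma^2}{2}\rbr{2\E[S_\eps(x)H(x)]+\E[H(x)^2]}}. \]
The measure factor on the right converges to $\nu_{\gamma,S}(\dx)$ by \Cref{thm: Existence and moments of star scale invariant GMC}.

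It remains to show $\Phi_\eps\to\Phi$ uniformly on compact subsets of $D$ almost surely, where $\Phi(x) := \exp(\gamma H(x)-\frac{\gamma^2}{2}(g_G(x,x)-g_S(x,x)))$. Expanding $C_G=C_{S+H}$ gives the identity
\[ g_G(x,y)-g_S(x,y)=\E[S(x)H(y)]+\E[H(x)S(y)]+\E[H(x)H(y)], \]
which on the diagonal reads $g_G(x,x)-g_S(x,x)=2\,C_{SH}(x,x)+\E[H(x)^2]$ with $C_{SH}(x,y):=\E[S(x)H(y)]$. So the target reduces to the uniform convergence $\E[S_\eps(x)H(x)]\to C_{SH}(x,x)$ on compact subsets of $D$. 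The limit is continuous in $x$ because the symmetric combination $C_{SH}(x,y)+C_{SH}(y,x)=g_G(x,y)-g_S(x,y)-\E[H(x)H(y)]$ is continuous and equals $2\,C_{SH}(x,x)$ on the diagonal. Pointwise convergence comes from the martingale representation of $S_\eps$ together with the continuity of $H$; uniformity follows from equicontinuity estimates that exploit the Hölder regularity of the seed covariance $k_1$ and of $H$, combined with the rewriting $\E[S_\eps(x)H(x)]=\E[S_\eps(x)G(x)]-K_\eps(x,x)$, which replaces the problematic field $H$ by the log-correlated field $G$ whose cross-covariance with $S_\eps$ is better behaved.

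Applying \Cref{lemma: Convergence of random measures multiplied by a random function} to the product of the two factors then yields
\[ \nu_{\gamma,G}(f) = \int_{\R^d} f(u)\,\Phi(u)\,\nu_{\gamma,S}(\du) \]
for every $f\in C_c(D)$, which is the statement of the lemma. The main technical obstacle is the uniform convergence of $\E[S_\eps(x)H(x)]$: the pointwise version is immediate from the Gaussian structure, but the naive Cauchy--Schwarz bound $\abs{\E[S_\eps(x)H(x)]-\E[S_\eps(y)H(y)]}\leq\sqrt{\E[S_\eps(y)^2]\,\E[(H(x)-H(y))^2]}+\cdots$ loses control as $\eps\to 0$ because $\E[S_\eps(y)^2]=\log(\eps^{-1})\to\infty$, so uniformity must come from cancellations specific to the $\star$-scale martingale structure of $S_\eps$ and the coupling between $S$ and $H$ provided by the decomposition of \cite{JuSaWe19a}.
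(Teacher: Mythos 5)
Your plan is structurally sound — write the approximated Wick exponential of $G$ as a continuous random prefactor times the approximated Wick exponential of $S$, and pass to the limit with \Cref{lemma: Convergence of random measures multiplied by a random function}. But your choice of approximation, $G_\eps := S_\eps + H$ with the $\star$-scale cut-off $S_\eps$ and the \emph{unregularized} field $H$, is what creates the difficulty you then flag and do not resolve: controlling $\E[S_\eps(x)H(x)]$ uniformly on compacts. You correctly observe that a naive Cauchy--Schwarz bound is hopeless because $\E[S_\eps(x)^2]\to\infty$, and you suggest that the rewriting $\E[S_\eps(x)H(x)]=\E[S_\eps(x)G(x)]-K_\eps(x,x)$ saves the day, but the right-hand side involves $\E[S_\eps(x)G(x)]$ with $G$ a genuine distribution, which is exactly as delicate as the quantity you started with (and the subtraction of the divergent $K_\eps(x,x)$ must be made precise). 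This is a genuine gap, not a routine technicality. Separately, the claim that $C_\gamma(\eps)\,e^{\gamma(S_\eps+H)-\frac{\gamma^2}{2}\E[(S_\eps+H)^2]}\dx$ converges to $\nu_{\gamma,G}$ is not immediately supplied by the references you invoke: the cited convergence theorems are stated for convolution approximations (or for cut-offs of the full field), not for this hybrid of a cut-off $\star$-scale part plus an unregularized perturbation, so at criticality in particular this step would itself require justification.

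The paper sidesteps both problems by using \emph{the same convolution mollifier} for $G$ and for $S$: with $G_\delta=G*\rho_\delta$, $S_\delta=S*\rho_\delta$, and $H_\delta:=G_\delta-S_\delta$, one has $\E[G_\delta(x)^2]=\log\delta^{-1}+c_\rho+g_G(x,x)+o(1)$ and $\E[S_\delta(x)^2]=\log\delta^{-1}+c_\rho+g_S(x,x)+o(1)$ with the \emph{same} $x$-independent log and mollifier terms, so the difference converges to $g_G(x,x)-g_S(x,x)$ with no cross-covariance analysis whatsoever; meanwhile $H_\delta\to H$ locally uniformly because $H$ is continuous. Then both $e^{\gamma G_\delta-\frac{\gamma^2}{2}\E[G_\delta^2]}\dx$ and $e^{\gamma S_\delta-\frac{\gamma^2}{2}\E[S_\delta^2]}\dx$ converge to $\nu_{\gamma,G}$ and $\nu_{\gamma,S}$ respectively by the standard convolution-based GMC convergence, and \Cref{lemma: Convergence of random measures multiplied by a random function} finishes the proof. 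If you want to keep your $\star$-scale cut-off framework, you would need to actually carry out the uniform estimate on $\E[S_\eps(x)H(x)]$ (the decomposition theorem in \cite{JuSaWe19a} supplies information about the joint Gaussian structure, but the bound is not automatic) and to justify the convergence of the hybrid approximation; it is simpler to switch to consistent convolution approximations as the paper does.
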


\begin{proof}[Proof of \Cref{lemma:the absolute conituity of muX and muY}]
Let $C_\gamma(\delta)=1$ for $\gamma\in[0,\sqrt{2d})$ and $C_{\gamma_c}=\sqrt{\log(\delta^{-1})}$. Then we have for any $f\in C_c(D)$ 
\begin{align*}
\nu_{\gamma,G}(f)=&\lim_{\delta\searrow 0}C_\gamma(\delta)\int_{\R^d}f(u)e^{\gamma G_\delta(u)-\frac{\gamma^2}{2}\E[G_\delta(u)^2]}\du 
\\
=&\lim_{\delta\searrow 0}C_\gamma(\delta)\int_{\R^d}\big(f(u)e^{\gamma H_\delta(u)}
\\
&\qquad\qquad \times e^{\frac{\gamma^2}{2}[\E[(S_\delta(u))^2]-\E[(G_\delta(u))^2]]}e^{\gamma S_\delta(u)-\frac{\gamma^2}{2}\E[(S_\delta(u))^2]}\big)\du 
\\
=&\int_{\R^d}f(u)e^{\gamma H(u)-\frac{\gamma^2}{2}(g_G(u,u)-g_S(u,u))}\nu_{\gamma,S}(\du ),
\end{align*}
where the limit in the last step is taken in such a way (along a subsequence if necessary) that \Cref{lemma: Convergence of random measures multiplied by a random function} applies. Above we use the convolution approximations with the same mollifier for the variances $\E[(S_\eps(x))^2]$ and $\E[(G_\eps(x))^2]$ so that the logarithmic parts completely cancel.  
\end{proof}

Using \Cref{lemma:the absolute conituity of muX and muY} we can now write
\begin{align*}
&\log(\int_{\R^d}\eta_\eps(y-x)\nu_{\gamma,G}(\dy ))
\\
&=\log\bigg(\int_{\R^d}\big(\eta_\eps(y-x)e^{\gamma [H(y)+H(x)-H(x)]}
\\
&\qquad\qquad\quad \times e^{-\frac{\gamma^2}{2}[g_G(y,y)-g_S(y,y)+g_G(x,x)-g_S(x,x)-(g_G(x,x)-g_S(x,x))]}\big)\nu_{\gamma,S}(\dy )\bigg)
\\
&\quad+\log(\int_{\R^d}\eta_\eps(y-x)\nu_{\gamma,S}(\dy ))-\log(\int_{\R^d}\eta_\eps(y-x)\nu_{\gamma,S}(\dy ))
\\
&=\gamma H(x)-\frac{\gamma^2}{2}[g_G(x,x)-g_S(x,x)]+\log(\int_{\R^d}\eta_\eps(y-x)\nu_{\gamma,S}(\dy ))
\\
&\quad+\underbrace{\log(\frac{\int_{\R^d}\eta_\eps(y-x)e^{\gamma [H(y)-H(x)]-\frac{\gamma^2}{2}[g_G(y,y)-g_S(y,y)-(g_G(x,x)-g_S(x,x))]}\nu_{\gamma,S}(\dy )}{\int_{\R^d}\eta_\eps(y-x)\nu_{\gamma,S}(\dy )})}_{:=R_\eps(x)}
\end{align*}

In the following, we include the subscript for the field in the counter term to emphasise that the counter terms differ for different fields. Next assuming that the limit below exists we can use \Cref{thm:field from the measure star scale invariant case} to write
\begin{align*}
&\int_{D}\psi(x)\abr{\frac{1}{\gamma}\log(\int_{\R^d}\eta_\eps(y-x)\nu_{\gamma,G}(\dy))-F_{\gamma,\eps,\eta,S}(x)}\dx 
\\
&\longrightarrow \innerp{S}{\psi}+\int_{D}\psi(x)H(x)\dx
-\frac{\gamma}{2}\int_{D}\psi(x)[g_G(x,x)-g_S(x,x)]\dx 
\\
&\qquad\qquad+\frac{1}{\gamma}\lim_{\eps\searrow 0}\int_{D}\psi(x)R_\eps(x)\dx .
\end{align*}
From this we see that to show the existence, it is enough to show that the last term vanishes. If we assume this we can define
\[F_{\gamma,\eps,\eta,G}(x):=F_{\gamma,\eps,\eta,S}(x)+\frac{\gamma}{2}(g_S(x,x)-g_G(x,x))\]
and note that
\begin{align*}
\innerp{S}{\psi}+\int_{D}\psi(x)H(x)\dx =\innerp{G}{\psi}.
\end{align*} 
Thus, we are done if we show that 
\begin{align*}
\lim_{\eps\to 0}\int_D\psi(x)R_\eps(x)dx=0.
\end{align*}

First define 
\begin{align*}
M(x,y):=&\gamma[H(x)-H(y)]
\\
&\quad+(\gamma^2/2)[g_G(x,x)-g_S(x,x)-(g_G(y,y)-g_S(y,y))]. 
\end{align*}
Obviously, we have 
\begin{align*}
e^{-\sup_{u\in \supp(\eta_\eps(\cdot-x))} |M(x,u)|}&\leq e^{\inf_{u\in \supp(\eta_\eps(\cdot-x))}M(x,u)}\leq e^{M(x,y)}
\\
&\leq e^{\sup_{u\in \supp(\eta_\eps(\cdot-x))}M(x,u)}\leq e^{\sup_{u\in \supp(\eta_\eps(\cdot-x))}|M(x,u)|}
\end{align*}
for all $y\in\supp(\eta_\eps(\cdot-x))\subset B_\eps(x)$.
By monotonicity of the logarithm, we have
\begin{align*}
&-\sup_{u\in B_\eps(x)}|M(x,u)|\leq R_\eps(x)\leq\sup_{u\in B_\eps(x)}|M(x,u)|
\\
& \qquad \Rightarrow \quad |R_\eps(x)|\leq \sup_{u\in B_\eps(x)}|M(x,u)|
\end{align*}
Then along any deterministic sequence $\eps_n\searrow 0$ as $n\to \infty$ we have 
\begin{align*}
\lim_{n\to\infty}\sup_{\substack{x\in\supp(\psi) \\ y\in B_{\eps_n(x)}}}|M(x,y)|=0.
\end{align*}
The above holds, because for any $x\in\supp(\psi)$ and any $n\in\N$,  $H,g_G(\cdot,\cdot)$ and $g_S(\cdot,\cdot)$ are uniformly continuous in $B_{\eps_n}(x)$. Thus, the sequence of functions 
\begin{align*}
g_n(x):=\sup_{y\in B_{\eps_n}(x)}|M(x,y)|
\end{align*}
converges uniformly and we obtain 
\begin{align*}
\lim_{\eps\to\infty}\bigg|\int_{D}\psi(x)R_\eps(x)\dx \bigg|&\leq\lim_{\eps\to 0}\int_{D}|\psi(x)|\sup_{y\in B_\eps(x)}|M(x,y)|\dx 
\\
&=\int_{D}|\psi(x)|\lim_{n\to\infty}g_n(x)\dx =0.
\end{align*} 
\end{proof}

Given \Cref{thm:field from the measure general log correlated Gaussian case} we can now prove \Cref{thm:field from the measure for mildly non-Gaussian case}. The strategy is similar to the proof of \Cref{thm:field from the measure general log correlated Gaussian case} given \Cref{thm:field from the measure star scale invariant case}.
\begin{proof}[Proof of \Cref{thm:field from the measure for mildly non-Gaussian case}]
We write analogously to the proof of \Cref{thm:field from the measure general log correlated Gaussian case} 
\begin{align*}
\log\bigg(\int_{\R^d}&\eta_\eps(y-x)\nu_{\gamma,X}(\dy )\bigg)
\\
&=\log(\int_{\R^d}\eta_\eps(y-x)e^{\gamma H(y)}\nu_{\gamma,G}(\dy ))
\\
&=\gamma H(x)+\log(\int_{\R^d}\eta_\eps(y-x)\nu_{\gamma,G}(\dy ))
\\
&\quad+\log(\frac{\int_{\R^d}\eta_\eps(y-x)e^{\gamma H(y)-H(x)}\nu_{\gamma,G}(\dy )}{\int_{\R^d}\eta_\eps(y-x)\nu_{\gamma,G}(\dy )}).
\end{align*}
and proceed exactly as we did with the proof of \Cref{thm:field from the measure general log correlated Gaussian case}, but replace $S$ everywhere with $G$ and drop the terms $g_G-g_S$. Thus, we are done with proof the \Cref{thm:field from the measure for mildly non-Gaussian case}.
\end{proof}

\subsection{Discussion about using \Cref{def: non gaussian multiplicative chaos 2} in \Cref{thm:field from the measure for mildly non-Gaussian case}}
\label{subsec:Discussion about the other possible definition of non-Gaussian multiplicative chaos in theorem C} In this section, we discuss using the alternative \Cref{def: non gaussian multiplicative chaos 2} for $\nu_{\gamma,X}$ in \Cref{thm:field from the measure for mildly non-Gaussian case}. First of all, we assume that the  limit in \Cref{def: non gaussian multiplicative chaos 2} exists almost surely (possibly along some subsequence) in the weak topology of convergence of measures\footnote{This is the topology, that is used in our example discussed in \ref{sec:Applications}. However, convergence in the vague topology would suffice for us since we are always working on a compact $K\in D$. The difference in the definitions of weak and vague topology is replacing $f\in C_b(D)$ with $f\in C_c(D)$ and in practice the main difference is that in the vague topology the total mass does not need to converge.}
Then we have at least along some subsequence
\begin{align*}
\lim_{\delta\to 0}\int_Df(x)&e^{\gamma X_\delta(x)}(\E[e^{\gamma X_\delta(x)}])^{-1}\dx 
\\
&=\lim_{\delta\to 0}\int_{D}\big( f(x)e^{\gamma H_\delta(x)}(\E[e^{\gamma X_\delta(x)}])^{-1}
\\
&\qquad\qquad\times e^{\frac{\gamma^2}{2}\E[ (G_\delta(x))^2]}e^{\gamma G_\delta(x)-\frac{\gamma^2}{2}\E[(G_\delta(x))^2]}\big)\dx 
\\
&=\lim_{\delta \to 0}\int_Df(x)e^{\gamma H_\delta(x)}\frac{\E[e^{\gamma G_\delta(x)}]}{\E[e^{\gamma X_\delta(x)}]}e^{\gamma G_\delta(x)-\frac{\gamma^2}{2}\E[(G_\delta(x))^2]}\dx 
\\
&:=\int_D f(x)e^{\gamma H(x)}g(x)\nu_{\gamma,G}(\dx)
\end{align*}
provided that the limit 
\begin{align}
\label{eq: def function g}
e^{\gamma H(x)}g(x):=\lim_{\delta\to 0}e^{\gamma H_\delta(x)}g_\delta(x):=\lim_{\delta\to 0}e^{\gamma H_\delta(x)}\frac{\E[e^{\gamma G_{\delta}(x)}]}{\E[e^{\gamma X_\delta(x)}]}
\end{align}
satisfies the assumptions in \Cref{lemma: Convergence of random measures multiplied by a random function}.
\begin{remark}
The case that $G$ and $H$ are independent gives us the minimal requirement that the exponential moments of $H$ need to exist and be bounded from below, 
because then Gaussian part cancels out.
\end{remark}
Next we would like to proceed again as in the proof of \Cref{thm:field from the measure general log correlated Gaussian case}. We can write
\begin{align}
\label{eq:def of R epsilon}
\log\bigg(\int_{\R^d}&\eta_\eps(y-x)\nu_{\gamma,X}(\dy )\bigg)\nonumber
\\
&=\log(\int_{\R^d}\eta_\eps(y-x)e^{\gamma H(y)}g(x)\nu_{\gamma,G}(\dy )) \nonumber
\\
&=\gamma H(x)+\log(\int_{\R^d}\eta_\eps(y-x)\nu_{\gamma,G}(\dy ))\nonumber
\\
&\quad+\underbrace{\log(\frac{\int_{\R^d}\eta_\eps(y-x)e^{\gamma[H(y)-H(x)]}g(y)\nu_{\gamma,G}(\dy )}{\log(\int_{\R^d}\eta_\eps(y-x)\nu_{\gamma,G}(\dy ))})}_{:=R_\eps(x)}.
\end{align}
If we now assume that the limit $\lim_{\eps\to 0}R_\eps(x)$ defines a reasonably nice deterministic function, continuous for example, we can again mimic the proof of  \Cref{thm:field from the measure general log correlated Gaussian case}. More precisely, we need to show that $R_\eps(x):=R(x)+\Delta_\eps(x)$ with $R$ being deterministic and $\Delta_\eps(x)\to 0$ as $\eps\to 0$ pointwise with local uniform bounds.  Indeed, if we define
\begin{align*}
F_{\gamma,\eps,\eta,X}=F_{\gamma,\eps,\eta,G}+\frac{1}{\gamma}R
\end{align*}
we obtain by \Cref{thm:field from the measure general log correlated Gaussian case}
\begin{align*}
\lim_{\eps\to 0}\int_{D}\psi(x)&\rbr{\frac{1}{\gamma}\log(\int_{\R^d}\eta_\eps(y-x)\nu_{\gamma,X}(\dy ))-F_{\gamma,\eps,\eta,X}(x)}\dx 
\\
&=\innerp{G}{\psi}+\int_{D}\psi(x)H(x)\dx
\\
&=\innerp{X}{\psi}.
\end{align*}
Next we claim that if we assume $g>0$ everywhere, then the above assumption holds with $R(x)=\log(g(x))$. Indeed, we can write $g(y)=g(x)(1+[g(y)-g(x)]/g(x)))$. By this decomposition, we have
\begin{align*}
R_\eps(x)&=\log(g(x))
\\
&\quad+\log(\frac{\int_{\R^d}\eta_\eps(y-x)e^{\gamma(H(y)-H(x))}\rbr{1+\frac{g(y)-g(x)}{g(x)}}\nu_{\gamma,S}(\dy )}{\int_{\R^d}\eta_\eps(y-x)\nu_{\gamma,S}(\dy )})
\end{align*}
Now we proceed as in the proof of \Cref{thm:field from the measure general log correlated Gaussian case} and define $M(x,y)=\gamma[H(y)-H(x)]+\log(1+[g(y)-g(x)]/g(x)))$. Then we need to prove that along any deterministic sequence such that $\eps_n\to 0$ we have
\begin{align*}
\lim_{\eps_n \to 0}\sup_{\substack{x\in \supp(\psi) \\ y\in B_{\eps_n}(x)}}|M(x,y)|=0.
\end{align*}
This is clear by uniform continuity on compact sets since we assumed that $g(x)\neq 0$ for all $x\in \supp(\psi)$.

We can now collect the assumptions that we need in order to prove \Cref{thm:field from the measure for mildly non-Gaussian case} also using \Cref{def: non gaussian multiplicative chaos 2}.
\begin{enumerate}
\item The limit in \Cref{def: non gaussian multiplicative chaos 2} exists in the weak topology of Radon measures with support in $D$ and almost surely (possibly along a subsequence).
 
\item The limit in \Cref{eq: def function g} satisfies the assumptions in \Cref{lemma: Convergence of random measures multiplied by a random function} and the limiting function $g$ is strictly positive everywhere.
\end{enumerate} 
\begin{remark}
We do not have any easily verifiable conditions to guarantee that these assumptions are  satisfied. Furthermore, there are no canonical general approximations to be used in \Cref{def: non gaussian multiplicative chaos 2} and in the limit in \Cref{eq: def function g}. It is natural to believe that the approximations that are used to construct the specific field $X$ and to prove the coupling to a log-correlated Gaussian field, would be the easiest to use in order to derive suitable estimates. 
\end{remark}
Next we give an illustrative example to show that these assumptions are non-trivial and may not be satisfied even if $G$ and $H$ both are Gaussian in the decomposition $X=G+H$. Indeed, we take $G$ to be defined by the following series as in \Cref{eq:series expansion for log Gaussian fields} 
\begin{align*}
G(\theta):=\sum_{n=1}^\infty\frac{1}{\sqrt{n}}(A_n\sin(n\theta)+B_n\cos(n\theta)),
\end{align*}
where $\theta\in [0,2\pi)$ and $\{A_n\}$ and $\{B_n\}$ are two independent i.i.d  sequences of standard Gaussian random variables and the convergence is in suitable distributional sense for example in some negative exponent Sobolev space. This corresponds to a log-correlated Gaussian field on the unit circle $C_1$. In fact, this is the restriction of the  2-dimensional Gaussian free field to the unit circle $C_1=\{z\in\C\mid |z|=1\}$ up to arbitrary additive constant since the whole plane GFF is itself only defined up to an additive constant. This field is constructed for example in \cite[section 6]{JuSa17a}. Then, let 
\begin{align*}
H=\sum_{n=1}^\infty \frac{B_n}{\sqrt{n}\log(n)}
\end{align*} 
so that $H$ is a constant in $\theta$ and thus even $C^{\infty}(D)$ field. $H$ is also clearly Gaussian as a sum of Gaussian random variables. Also, the series for $H$ converges in $L^2$. Now let $G_N$ and $H_N$ be the approximations, where we cut off both series at $n=N$. In this case, we have at $\theta=0$ 
\begin{align*}
\frac{\E\abr{e^{\gamma G_N(0)}}}{\E\abr{e^{\gamma (G_N(0)+H_N)}}}&=\frac{\prod_{n=1}^N\E\abr{e^{\frac{\gamma}{\sqrt{n}}B_n}}}{\prod_{n=1}^N\E\abr{e^{\frac{\gamma}{\sqrt{n}}B_n+\frac{\gamma}{\sqrt{n}\log(n)}B_n}}}
\\
&=\prod_{n=1}^N e^{-\frac{\gamma^2}{n\log(n)}-\frac{\gamma^2}{2n(\log(n))^2}}
\\
&=e^{-\gamma^2\sum_{n=1}^N \frac{1}{n\log(n)}}e^{-\frac{\gamma^2}{2}\sum_{n=1}^N\frac{1}{n(\log(n))^2}}\overset{N\to \infty}{\longrightarrow}0,
\end{align*} 
because the first sum in the exponents diverges and the second converges. 
Thus, the function $g$ defined in \ref{eq: def function g} corresponding to the field $X=G+H$ would not be strictly positive. Furthermore, let us compute the covariance of $X_N=G_N+H_N$
\begin{align*}
\E[X_N(\theta)X_N(\phi)]=\E[G_N(\theta)G_N(\phi)]+\E[(G_N(\theta)+G_N(\phi))H_N]+\E[H_N^2],
\end{align*}
where we have used the fact that $H_N$ is constant in $\theta$ and $\phi$. In \cite[section 6]{Ka85a} the authors show that 
\begin{align*}
\E[G_N(\theta)G_N(\phi)]\overset{N\to \infty}{\longrightarrow}\log(\frac{1}{2|\sin(\frac{1}{2}(\theta-\phi))|}),
\end{align*} 
which can readily be rewritten in terms of the logarithm of the  distance between two points on the unit circle $|e^{i\theta}-e^{i\phi}|$. Thus, this is the logarithmic part of the covariance of the field $X=\lim_{n\to \infty}G_N+H_N$, where the limit is taken in suitable distribution space. Furthermore,
\begin{align*}
\E[H_N^2]\to\sum_{n=1}^\infty \frac{1}{n(\log(n))^2}<\infty,
\end{align*}
but
\begin{align*}
\E[(G_N(\theta)+G_N(\phi))H_N]=\sum_{n=1}^N\frac{\cos(n\theta)+\cos(n\phi)}{n\log(n)},
\end{align*}
which does not converge as $N\to \infty$ if $\theta=0$ or $\phi=0$. Therefore, $X$ is not a log-correlated Gaussian field in the sense we introduced them in the introduction. This is, because the corresponding function $g_X(\theta,\phi)$ in the covariance kernel $C_X$ given in \ref{eq:definition of the covariance kernel of a log correlated Gaussian field} would have singularity at the origin.

\subsection{Proof of \Cref{thm:field from the measure star scale invariant case}}
\label{sec: Proof of the first main theorem}
In this section, we omit the subscript $S$ for the field in the GMC measure since there will only be GMC measures corresponding to a $\star$-scale invariant field. Then let us begin by defining what turns out to be the correct function to be subtracted
\begin{align*}
F_{\gamma,\eps,\eta}(x)&:=\frac{1}{\gamma}\E\abr{\log(\int_{\R^d}\eta(u)e^{\gamma Y_{\eps,x}(u)}\nu_\gamma^{\eps,x}(\du))}-\frac{\gamma^2}{2}\log(\eps^{-1})
\\
&:=G_{\gamma,\eps,\eta}(x)-\frac{\gamma^2}{2}\log(\eps^{-1})
\end{align*}
This expression is well-defined by the remark after \Cref{prop: independence of nu eps x and nu eps x'}. The process $Y_{\eps,x}$ was defined in \Cref{eq:Def Y}. Firstly note that this function is well-defined since it is always finite for fixed $\eps>0$. It is even bounded for $x\in\supp(\psi)$. 

\begin{proposition}
\label{prop:finiteness of the counter term}
For any $\gamma\in [0,\sqrt{2d}]$ and $\eps>0$, the function $F_{\gamma,\eps,\eta}(x)$ is bounded uniformly for $x\in \supp(\psi)$ with $\psi$ as in  \Cref{thm:field from the measure star scale invariant case}.
\end{proposition}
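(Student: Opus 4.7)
The plan is to first observe that $F_{\gamma,\eps,\eta}(x)$ is actually \emph{constant} in $x$ for fixed $\eps$, so that uniform boundedness reduces to finiteness at a single point, and then to establish finiteness by a two-sided Jensen-type bound that separately controls the positive and negative tails of the integrand via the moment estimates collected in \Cref{lemma: even moments of Y} and \Cref{thm: Existence and moments of star scale invariant GMC}.

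The first step is the $x$-independence. By the discussion following \Cref{prop: properties of the process Z}, $\nu_\gamma^{\eps,x}$ is independent of $Y_{\eps,x}$ and is distributed as $\nu_{\gamma,B_1(0)}$, so its law has no dependence on $x$. For $Y_{\eps,x}(u)=S_\eps(x+\eps u)-S_\eps(x)$, the translation invariance of $K_\eps$ gives
\begin{align*}
\E[Y_{\eps,x}(u)Y_{\eps,x}(v)] = K_\eps(\eps u,\eps v)-K_\eps(\eps u,0)-K_\eps(0,\eps v)+K_\eps(0,0),
\end{align*}
which is $x$-independent. Hence the joint law of $(Y_{\eps,x},\nu_\gamma^{\eps,x})$ does not depend on $x$, so $G_{\gamma,\eps,\eta}(x)$ is constant in $x$; it then suffices to show that this constant is finite.

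For the upper bound, pick $q\in(0,1)$ with $q<q_c(\gamma)$ (possible since $q_c(\gamma_c)=1$ and $q_c>1$ for subcritical $\gamma$) and use $\log t\leq \tfrac{1}{q}(t^q-1)$. Bound the integrand crudely by
\begin{align*}
\int_{\R^d}\eta(u)e^{\gamma Y_{\eps,x}(u)}\nu_\gamma^{\eps,x}(\du) \;\leq\; \|\eta\|_\infty\,e^{|\gamma|\sup_{|u|\leq 1}|Y_{\eps,x}(u)|}\,\nu_\gamma^{\eps,x}(\supp\eta),
\end{align*}
raise to the $q$-th power, take expectation, and use the independence of $Y_{\eps,x}$ and $\nu_\gamma^{\eps,x}$ to factor the resulting expectation. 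The factor involving $Y_{\eps,x}$ is finite by the exponential-moment bound \Cref{eq:exponential moments of the maximum of Y} and the factor involving $\nu_\gamma^{\eps,x}$ is finite because $q<q_c(\gamma)$ yields positive moments of the GMC via \Cref{thm: Existence and moments of star scale invariant GMC}.

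For the lower bound, the same manipulation with $-\log t\leq \tfrac{1}{q}t^{-q}$ reduces matters to bounding $\E[(\int\eta\,e^{\gamma Y_{\eps,x}}d\nu_\gamma^{\eps,x})^{-q}]$. The elementary estimate
\begin{align*}
\int_{\R^d}\eta(u)e^{\gamma Y_{\eps,x}(u)}\nu_\gamma^{\eps,x}(\du) \;\geq\; e^{-|\gamma|\sup_{|u|\leq 1}|Y_{\eps,x}(u)|}\int_{\R^d}\eta(u)\,\nu_\gamma^{\eps,x}(\du)
\end{align*}
combined with independence factors the negative $q$-th moment into a finite exponential moment of $\sup|Y_{\eps,x}|$ (\Cref{lemma: even moments of Y}) and a negative moment $\E[\nu_\gamma^{\eps,x}(\eta)^{-q}]$, which is finite for every $q>0$ since negative GMC moments exist to all orders by \Cref{thm: Existence and moments of star scale invariant GMC} (the statement gives $q\in(-\infty,q_c)$). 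Combining these two bounds, $|G_{\gamma,\eps,\eta}(x)|<\infty$, and hence $F_{\gamma,\eps,\eta}$ is a finite constant on $\supp(\psi)$. The only point requiring any real care is the critical case, where $q_c(\gamma_c)=1$ forces one to use a strictly sub-linear power $q<1$ in the upper-bound Jensen step rather than simply taking expectations; this is the reason for stating the bound with a free parameter $q$ rather than $q=1$.
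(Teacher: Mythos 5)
Your proof is correct and follows essentially the same route as the paper: bound the integral inside the logarithm above and below by pulling out $e^{\pm|\gamma|\sup_{|u|\leq 1}|Y_{\eps,x}(u)|}$, then control the resulting expectations using the uniform (exponential) moment bounds for $\sup_{|u|\leq 1}|Y_{\eps,x}(u)|$ from \Cref{lemma: even moments of Y} and the positive/negative GMC moment bounds from \Cref{thm: Existence and moments of star scale invariant GMC}. The one structural difference is cosmetic: the paper first uses additivity of the logarithm to split $\E[\log(\cdot)]$ into $\E[\sup Y]+\tfrac1\gamma\E[\log\nu_\gamma^{\eps,x}(\eta)]$ and then invokes the power bound $|\log t|\le C_a(t^a+t^{-a})$ (\Cref{lemma:logarithmic estimate}) only on the second summand, whereas you apply the Jensen-type bounds $\log t\le \tfrac1q(t^q-1)$ and $-\log t\le \tfrac1q t^{-q}$ directly to the whole product and then factor by independence; both come down to the same ingredients and the same need to take $q$ (resp.\ $a$) small enough to survive the critical threshold $q_c(\gamma_c)=1$. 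Your preliminary observation that the joint law of $(Y_{\eps,x},\nu_\gamma^{\eps,x})$ is $x$-independent, so that $F_{\gamma,\eps,\eta}$ is in fact \emph{constant} in $x$, is a clean sharpening that the paper leaves implicit (its bounds are automatically $x$-uniform because they depend only on distributions), and it is a nice way to phrase why ``uniform on $\supp\psi$'' reduces to finiteness at a single point.
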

\noindent
For the proof of the above proposition, we need the following basic estimate
\begin{lemma}
\label{lemma:logarithmic estimate}
Let $x>0$ and $k\in\N$. Then
\begin{align*}
|\log(x)|^{k}\leq C_{a,k}(x^{a}+x^{-a})
\end{align*}
for some $C_{a,k}>0$ and all $a>0$.
\end{lemma}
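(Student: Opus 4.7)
The plan is to split the argument into the two regimes $x \geq 1$ and $0 < x \leq 1$, exploiting the symmetry $\log(1/x) = -\log(x)$ to reduce everything to the well-known fact that $(\log y)^k$ grows slower than any positive power of $y$ at infinity.

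First I would record the auxiliary statement that for any $a > 0$ and $k \in \mathbb{N}$ there exists $M_{a,k} > 0$ such that $(\log y)^k \leq M_{a,k}\, y^a$ for all $y \geq 1$. This is standard: the function $y \mapsto (\log y)^k / y^a$ is continuous on $[1,\infty)$, tends to $0$ as $y \to \infty$ (since $\log y = o(y^{a/k})$), and equals $0$ at $y=1$, hence is bounded. One can also make this fully elementary by writing $\log y \leq (k/a)(y^{a/k} - 1)/(k/a) \cdot \text{(const)}$, or by just invoking L'Hôpital's rule $k$ times.

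Next I would handle the two cases. If $x \geq 1$, then $|\log x|^k = (\log x)^k \leq M_{a,k}\, x^a \leq M_{a,k}(x^a + x^{-a})$, using that $x^{-a} \geq 0$. If $0 < x \leq 1$, set $y = 1/x \geq 1$; then $|\log x|^k = (\log y)^k \leq M_{a,k}\, y^a = M_{a,k}\, x^{-a} \leq M_{a,k}(x^a + x^{-a})$. Taking $C_{a,k} := M_{a,k}$ yields the claim for all $x > 0$.

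The proof is essentially routine and I do not expect any real obstacle; the only small care needed is to make sure the constant $C_{a,k}$ is chosen uniformly in both regimes, which is achieved automatically by the above argument because both cases produce the same bound $M_{a,k}$. This estimate will presumably be used in the proof of Proposition~\ref{prop:finiteness of the counter term} to dominate powers of logarithms appearing inside expectations of the GMC mass by polynomial moments, which are controlled by \Cref{thm: Existence and moments of star scale invariant GMC} and \Cref{lemma: even moments of Y}.
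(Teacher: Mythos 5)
Your proof is correct and takes essentially the same route as the paper: both reduce by the symmetry $x \mapsto 1/x$ to the regime $x \ge 1$ and then bound the logarithm by a power. The only cosmetic difference is in the auxiliary power bound: the paper first establishes $\log x \le C_a x^a$ for $x\ge 1$ by an explicit monotonicity (derivative) argument and then raises to the $k$-th power with $a$ replaced by $ka$, whereas you bound $(\log y)^k \le M_{a,k} y^a$ directly by observing the ratio is continuous, vanishes at $1$, and tends to $0$ at infinity.
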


\begin{proof}
Because both sides of the equation are invariant under $x\mapsto 1/x$, it is enough to concentrate on the case $x\geq 1$. Also in this case $x^{-a}\leq x^a$ for all $a>0$ so that by the positivity of both sides of $|\log(x)|\leq C_a x^a$ we can always raise both sides to the power of $k$, choose $C_{a,k}=(C_{a,1})^k$ and replace $a$ by $ka$, which also arbitrary and strictly positive for fixed $k$ if $a$ is. Thus, it is enough to show that
\begin{align*}
\log(x)\leq C_a x^a
\end{align*} 
for some $C_a$ and all $a>0$, when $x\geq 1$. Consider
\begin{align*}
f(x)=\log(x)-C_ax^a.
\end{align*} 
We have $f(1)=-C_a$ and $f'(x)=(1/x)(1-a C_a x^a)$, which is negative if $aC_a\geq 1$ i.e $C_a\geq 1/a$. This shows that $f$ is decreasing and so negative on $[1,\infty)$, which proves our claim.
\end{proof}

\begin{proof}[Proof of \Cref{prop:finiteness of the counter term}]

\smallskip
\noindent
Firstly, for $\eps>0$ it is enough to look at $G_{\gamma,\eps,\eta}$.  We have by the monotonicity of the logarithm
\begin{align*}
\frac{1}{\gamma}\E&\abr{\log(\inf_{u\in\supp(\eta)}\cbr{\exp(\gamma Y_{\eps,x}(u))}\nu_\gamma^{\eps,x}(\eta))}
\\
&\leq G_{\gamma,\eps,\eta}
\\
&\leq  \frac{1}{\gamma}\E\abr{\log(\sup_{u\in\supp(\eta)}\cbr{\exp(\gamma Y_{\eps,x}(u))}\nu_\gamma^{\eps,x}(\eta))}
\\
&\leq \E\abr{\sup_{|u|\leq 1}Y_{\eps,x}(u)}+\frac{1}{\gamma}\E[\log(\nu_\gamma^{\eps,x}(\eta))]
\end{align*}
where the lower bound can be estimated analogously. Note that by \Cref{lemma:logarithmic estimate} and \Cref{thm: Existence and moments of star scale invariant GMC}
\begin{align*}
\E[\log(\nu_\gamma^{\eps,x}(\eta))]=\E[\log(\nu_{\gamma,B_1(0)}(\eta))]<\infty,
\end{align*}
where we used the fact that $\nu_{\gamma}^{\eps,x}$ is distributed like $\nu_{\gamma,B_1(0)}$. Then denoting $R_{\gamma,\eps,\eta}=G_{\gamma,\eps,\eta}-\gamma^{-1}\E[\log(\nu_\gamma^{\eps,x}(\eta))]$ we have
\begin{align*}
|R_{\gamma,\eps,\eta}|&=\max(R_{\gamma,\eps,\eta},-R_{\gamma,\eps,\eta})
\\
&\leq\max\rbr{\E\abr{\sup_{|u|<1}Y_{\eps,x}(u)},\E\abr{-\inf_{|u|<1}Y_{\eps,x}(u)}}
\\
&\leq  \E\abr{\left|\sup_{|u|<1}Y_{\eps,x}(u)\right|}+\E\abr{\left|\inf_{|u|<1}Y_{\eps,x}(u)\right|}
\\
&<\infty
\end{align*} 
by the argument in the proof of \Cref{lemma: even moments of Y} and the fact that the absolute values of the $\inf$ and $\sup$ are identically distributed.
\end{proof}

Then we have
\begin{align*}
\E&\bigg[\bigg(\int_{D}\psi(x)\bigg\{\frac{1}{\gamma}\log(\int_{\R^d}\eta_\eps(y-x)\nu_\gamma(\dy ))\nonumber
\\
&\qquad\qquad-\abr{G_{\gamma,\eps,\eta}(x)-\frac{\gamma^2}{2}\log(\eps^{-1})}\bigg\}\dx -\int_{\R^d}\psi(x)S_\eps(x)\dx \bigg)^2\bigg]
\\
&=\E\bigg[\bigg(\int_D\psi(x)\bigg\{\frac{1}{\gamma}\log(\int_{\R^d}\eta(y)e^{\gamma Y_{\eps,x}}\nu_\gamma^{\eps,x}(\dy ))\nonumber
\\
&\qquad\qquad-\E\abr{\frac{1}{\gamma}\log(\int_{\R^d}\eta(y)e^{\gamma Y_{\eps,x}}\nu_\gamma^{\eps,x}(\dy ))}\bigg\}\dx \bigg)^2\bigg]
\\
&:=E_\eps.
\end{align*}
Above we used the fundamental representation \Cref{eq: definition of nuGammaEpsx}. Thus, noting that $\innerp{S}{\phi}:=\lim_{\eps\to 0}\int_{\R^d}S_{\eps}(x)\psi(x)\dx $ and convergence in $L^p(\P)$ for $p>1$ implies convergence in probability, we aim to show that 
\begin{align}
\label{eq:L2 convergence in star scale invariant case}
\lim_{\eps\to 0}E_\eps=0.
\end{align}

Then let us define 
\begin{align*}
\Delta_{\gamma,\eps}(x,x'):&=\E\bigg[\bigg(\frac{1}{\gamma}\log(\int_{\R^d}\eta(y)e^{\gamma Y_{\eps,x}}\nu_\gamma^{\eps,x}(\dy ))
\\
&\qquad\quad-\E\abr{\frac{1}{\gamma}\log(\int_{\R^d}\eta(y)e^{\gamma Y_{\eps,x}}\nu_\gamma^{\eps,x}(\dy ))}\bigg)
\\
&\qquad\times\bigg(\frac{1}{\gamma}\log(\int_{\R^d}\eta(y)e^{\gamma Y_{\eps,x'}}\nu_\gamma^{\eps,x'}(\dy ))
\\
&\qquad\quad-\E\abr{\frac{1}{\gamma}\log(\int_{\R^d}\eta(y)e^{\gamma Y_{\eps,x'}}\nu_\gamma^{\eps,x'}(\dy ))}\bigg)\bigg]
\\
&=\Cov(A_\eps(x),A_\eps(x')),
\end{align*}
where
\[A_\eps(x):= \frac{1}{\gamma}\log(\int_{\R^d}\eta(u)e^{\gamma Y_{\eps,x}(u)}\nu_{\gamma}^{\eps,x}(\du ))\]
By Fubini, the limit in \Cref{eq:L2 convergence in star scale invariant case} is equivalent to 
\[\lim_{\eps\to 0}\int_{D\times D}\psi(x)\psi(x')\Delta_{\gamma,\eps}(x,x')\dx \dx '.\]
Therefore, since $\int_{D\times D}|\psi(x)\psi(x')|\dx \dx '<\infty$ we are done by dominated convergence, if we prove the lemma below. 

\begin{lemma}
For $\gamma\in [0,\sqrt{2d}]$ and for $x,x'\in \supp(\psi)$ such that $x\neq x'$ we have $\lim_{\eps\to 0}\Delta_{\gamma,\eps}(x,x')=0$. Furthermore, $\Delta_{\gamma,\eps}(x,x')$ is bounded uniformly in $x,x'\in\supp(\psi)$ and $\eps>0$.
\end{lemma}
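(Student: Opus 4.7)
The plan is to establish that $A_\eps(x)$ and $A_\eps(x')$, where $A_\eps(y) := \gamma^{-1}\log\!\int\eta(u)\,e^{\gamma Y_{\eps,y}(u)}\,\nu_\gamma^{\eps,y}(du)$, are asymptotically independent for fixed $x\neq x'$, and then to upgrade this distributional independence to convergence of the covariance via uniform integrability. The very same moment bound will simultaneously deliver the uniform-in-$\eps$ control of $\Delta_{\gamma,\eps}(x,x')$ for arbitrary $x,x'\in\supp(\psi)$.

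For the uniform moment bound, I would apply Cauchy--Schwarz on covariances, $|\Delta_{\gamma,\eps}(x,x')|\leq \Var(A_\eps(x))^{1/2}\Var(A_\eps(x'))^{1/2}$, reducing matters to $\sup_{\eps,x}\E[|A_\eps(x)|^{2(1+\delta)}]<\infty$ for some small $\delta>0$. Setting $J_\eps(x):=\int\eta(u)\,e^{\gamma Y_{\eps,x}(u)}\,\nu_\gamma^{\eps,x}(du)$ and $M_\eps(x):=\sup_{|u|\leq 1}|Y_{\eps,x}(u)|$, the elementary sandwich $e^{-\gamma M_\eps(x)}\nu_\gamma^{\eps,x}(\eta)\leq J_\eps(x)\leq e^{\gamma M_\eps(x)}\nu_\gamma^{\eps,x}(\eta)$ combined with \Cref{lemma:logarithmic estimate} yields $|A_\eps(x)|^{2(1+\delta)}\leq C_{a,\delta}\,e^{a\gamma(1+\delta)M_\eps(x)}\bigl(\nu_\gamma^{\eps,x}(\eta)^a+\nu_\gamma^{\eps,x}(\eta)^{-a}\bigr)^{1+\delta}$ for any $a>0$. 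Since $Y_{\eps,x}$ is independent of $\nu_\gamma^{\eps,x}$ by \Cref{prop: properties of the process Z}, the expectation factorises; \Cref{lemma: even moments of Y} gives uniform control of the exponential factor, while \Cref{thm: Existence and moments of star scale invariant GMC} gives uniform control of the GMC factor as long as $a(1+\delta)$ stays below the relevant moment threshold. In the critical case $\gamma=\gamma_c$ one only needs $a(1+\delta)<q_c(\gamma_c)=1$, which is easily arranged by taking $a$ small. This simultaneously yields the uniform bound on $\Delta_{\gamma,\eps}$ and uniform integrability of $\{A_\eps(x)A_\eps(x')\}_\eps$.

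For the vanishing of the covariance I would fix $x\neq x'$ and restrict to $\eps<|x-x'|/3$. Three inputs combine: (i) by \Cref{prop: properties of the process Z}, $(Y_{\eps,x},Y_{\eps,x'})$ is $\sigma(S_\eps)$-measurable and therefore independent of $(\nu_\gamma^{\eps,x},\nu_\gamma^{\eps,x'})$, which is measurable with respect to the $Z$-processes; (ii) by \Cref{prop: independence of nu eps x and nu eps x'}, the restrictions $\nu_\gamma^{\eps,x}|_{\supp(\eta)}$ and $\nu_\gamma^{\eps,x'}|_{\supp(\eta)}$ are independent, each equal in law to $\nu_{\gamma,B_1(0)}|_{\supp(\eta)}$; (iii) by \Cref{prop: convergence of Yx and Yx'}, $(Y_{\eps,x},Y_{\eps,x'})$ converges jointly in $C(\overline{B_1(0)})^2$ to a pair of independent Gaussian processes. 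Hence the joint law of $(Y_{\eps,x},Y_{\eps,x'},\nu_\gamma^{\eps,x}|_{\supp(\eta)},\nu_\gamma^{\eps,x'}|_{\supp(\eta)})$ converges to a product measure of four independent factors $(G^1,G^2,\mu^1,\mu^2)$. Since $J_\eps(x)>0$ almost surely, the map $(y,\mu)\mapsto\gamma^{-1}\log\!\int\eta\,e^{\gamma y}\,d\mu$ is continuous at the limit in the product topology (uniform on $\overline{B_1(0)}$ for $y$, weak convergence of measures for $\mu$), so the continuous mapping theorem gives $(A_\eps(x),A_\eps(x'))\xrightarrow{d}(A(x),A(x'))$ with $A(x),A(x')$ independent. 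Uniform integrability then upgrades distributional convergence to convergence of the first and second mixed moments, delivering $\Delta_{\gamma,\eps}(x,x')\to\Cov(A(x),A(x'))=0$.

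The main obstacle is the moment bookkeeping at criticality $\gamma=\gamma_c$, where positive moments of the GMC are available only up to $q<1$ and so the usual shortcut of bounding the log by a fixed small positive power of its argument is delicate. This is resolved precisely by the flexibility of the exponent $a$ in \Cref{lemma:logarithmic estimate}: since the logarithm is controlled by \emph{arbitrarily} small positive and negative powers of its argument, one can always choose $a$ (and $a(1+\delta)$) below the critical moment threshold $q_c$, so the argument of the subcritical and critical cases can be treated uniformly.
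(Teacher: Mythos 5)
Your proposal is correct and follows essentially the same path as the paper's proof: Cauchy--Schwarz on the covariance to reduce the uniform bound to a uniform $L^p$-moment bound on $A_\eps(x)$; control of that moment via the independence of $Y_{\eps,x}$ from $\nu_\gamma^{\eps,x}$, the logarithmic estimate, the sup-moments of $Y_{\eps,x}$, and the moment bounds for the $\star$-scale GMC; and convergence of $(A_\eps(x),A_\eps(x'))$ in distribution to an independent pair via \Cref{prop: convergence of Yx and Yx'}, \Cref{prop: independence of nu eps x and nu eps x'}, and the continuous mapping theorem, then upgraded to convergence of the covariance by uniform integrability. The only surface-level differences are cosmetic: the paper establishes a uniform $L^4$ bound on $A_\eps$ (so that $A_\eps(x)A_\eps(x')$ is $L^2$-bounded), while you use $L^{2(1+\delta)}$; and the paper handles the joint distributional convergence by conditioning on the GMC measures and applying the continuous mapping theorem to the conditional expectation via characteristic functions, whereas you state the product-space continuity directly. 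The exponents in your displayed bound on $|A_\eps(x)|^{2(1+\delta)}$ are slightly off (the $(1+\delta)$ factor lands in the wrong places), but since $a$ is a free small parameter this does not affect the validity of the argument.
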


\begin{proof}
We begin by considering the second claim. Firstly we have 
\begin{align*}
\sup_{x,x'\in\supp(\psi)}&\sup_{\eps>0}\Cov(A_\eps(x),A_\eps(x'))
\\
&=\sup_{x,x'\in\supp(\psi)}\sup_{\eps>0}\rbr{\E[A_\eps(x)A_\eps(x')]-\E[A_\eps(x)]\E[A_\eps(')]}
\\
&\leq\sup_{x,x'\in\supp(\psi)}\sup_{\eps>0}\rbr{\E[|A_\eps(x)A_\eps(x')|]+\E[|A_\eps(x)|]\E[|A_\eps(x')|]}
\\
&\leq 2\sup_{x,x'\in\supp(\psi)}\sup_{\eps>0}\sqrt{\E[|A_\eps(x)|^2]}\sqrt{\E[|A_\eps(x')|^2]}
\\
&\leq 2\rbr{\sup_{x\in\supp(\psi)}\sup_{\eps>0}\sqrt{\E[|A_\eps(x)|^2]}}^2.
\end{align*}
Thus, we need to show that
\[\sup_{x\in\supp(\psi)}\sup_{\eps>0}\E[|A_\eps(x)|^2]<\infty.\]
since the square root is irrelevant for the finiteness of the quantity.
With similar arguments as in the proofs of \Cref{lemma: even moments of Y} and \Cref{prop:finiteness of the counter term} we obtain
\begin{align*}
\E[|A_\eps(x)|^2]\leq C\rbr{\E[\sup_{|u|\leq 1}|Y_{\eps,x}(u)|^2]+\frac{1}{\gamma^2}\E\abr{|\log(\nu_\gamma^{\eps,x}(\eta))|^2}},
\end{align*}
where $C>0$ is some constant independent of $\eps$ and $x$. Now again the first term is bounded uniformly in $\eps$ and $x$ by \Cref{lemma: even moments of Y} and the second term is also uniformly bounded by the fact that $\nu_{\gamma}^{\eps,x}$ is distributed like $\nu_{\gamma,B_1(0)}$, \Cref{lemma:logarithmic estimate} and \Cref{thm: Existence and moments of star scale invariant GMC}. 

For the first claim, note first that \Cref{prop: convergence of Yx and Yx'} combined with the Continuous mapping theorem \cite[Theorem 4.27]{Ka02a} and the fact that $\nu_{\gamma}^{\eps,x}$ and $\nu_{\gamma}^{\eps,x'}$ are identically distributed and independent on $B(0,1)$ for small enough $\eps$ and fixed $x\neq x'$ we have 
\begin{align*}
(A_\eps(x),A_\eps(x'))\overset{d}{\rightarrow}
\bigg(\frac{1}{\gamma}\log(\int_{\R^d}\eta(u)e^{\gamma G_1(u)}\nu_{\gamma,1}(\du)),
\\
\qquad\frac{1}{\gamma}\log(\int_{\R^d}\eta(v)e^{\gamma G_2(v)}\nu_{\gamma,2}(\dv))\bigg),
\end{align*}
where $G_i$ are the independent Gaussian processes from \Cref{prop: convergence of Yx and Yx'} and $\nu_{\gamma,i}$ are distributed like $\nu_{\gamma,B_1(0)}$ and are independent of each other and of the processes $G_i$. Thus, the components above are independent and their covariance vanishes. By the estimates in the end of the proof, the covariance $\E[A_\eps(x)A_\eps(x')]$ converge to the covariance of the limit $\lim_{\eps\to 0} (A_\eps(x),A_\eps(x'))$. For an expert in the field the details of the above argument may be routine, but we give the full details for non-specialist reader interested in them.

Firstly, we note that for arbitrary real valued random variables we have $X_n\overset{d}{\to}X$ if $\varphi_{X_n}\to\varphi_X$, where $\phi_Z$ is the characteristic function of the random variable $Z$. Therefore, it is enough to show that 
\begin{align*}
\E\abr{e^{itA_\eps(x)+it'A_\eps(x')}}\to \E\bigg[\exp\bigg(\frac{it}{\gamma}\log(\int_{\R^d}\eta(u)e^{\gamma G_1(u)}\nu_{\gamma,1}(\du ))
\\
\qquad\qquad+\frac{it'}{\gamma}\log(\int_{\R^d}\eta(u)e^{\gamma G_2(u)}\nu_{\gamma,2}(\du ))\bigg)\bigg].
\end{align*}
We also note that by the fact that $\nu_\gamma^{\eps,x}$ is distributed like $\nu_{\gamma,B_{1}(0)}$ we have 
\begin{align*}
\E&\bigg[e^{itA_\eps(x)+it'A_\eps(x')}\bigg]
\\
&=\E\bigg[\exp\bigg(\frac{it}{\gamma}\log(\int_{\R^d}\eta(u)e^{\gamma Y_{\eps,x}(u)}\nu_{\gamma,1}(\du ))
\\
&\quad+\frac{it'}{\gamma}\log(\int_{\R^d}\eta(u)e^{\gamma Y_{\eps,x'}(u)}\nu_{\gamma,2}(\du ))\bigg)\bigg]
\\
&=\E\bigg[\E\bigg[\exp\bigg(\frac{it}{\gamma}\log(\int_{\R^d}\eta(u)e^{\gamma Y_{\eps,x}(u)}\nu_{\gamma,1}(\du ))
\\
&\quad+\frac{it'}{\gamma}\log(\int_{\R^d}\eta(u)e^{\gamma Y_{\eps,x'}(u)}\nu_{\gamma,2}(\du ))\bigg) \bigg|\sigma(\nu_{\gamma,1},\nu_{\gamma,2})\bigg]\bigg]
\\
&=\E_{\gamma}\bigg[\bigg(\E_{\eps,x,x'}\bigg[\exp\bigg(\frac{it}{\gamma}\log(\int_{\R^d}\eta(u)e^{\gamma Y_{\eps,x}(u)}\mu_1(\du ))
\\
&\quad+\frac{it'}{\gamma}\log(\int_{\R^d}\eta(u)e^{\gamma Y_{\eps,x'}(u)}\mu_2(\du ))\bigg)\bigg]\bigg)_{(\mu_1,\mu_2)=(\nu_{\gamma,1},\nu_{\gamma,2})}\bigg],
\end{align*}
where the last step holds by independence and the fact that the imaginary exponential is always uniformly bounded. Above $\E_\gamma$ and $\E_{\eps,x,x'}$ denote the expectations with respect to  $\nu_{\gamma,1}$ and $\nu_{\gamma,2}$ jointly, and $Y_{\eps,x}$ and $Y_{\eps,x'}$ jointly respectively. Now since everything above is uniformly bounded we can first exchange the order of the first expectations and the limit $\eps\to 0$. Then we can apply the strategy we started with backwards and use the Continuous mapping theorem to the inner expectation, where $\mu $ is a fixed realization of the multiplicative chaos $\nu_\gamma$. 

We know that $Y_{\eps,x}$ and $Y_{\eps,x'}$ converge jointly in distribution by  \Cref{prop: convergence of Yx and Yx'}. Thus, to show that for fixed positive Radon measures $\mu_1$ and $\mu_2$ we have
\begin{align*}
&\bigg(\frac{1}{\gamma}\log(\int_{\R^d}\eta(u)e^{\gamma Y_{\eps,x}(u)}\mu_1(\du)),\frac{1}{\gamma}\log(\int_{\R^d}\eta(u)e^{\gamma Y_{\eps,x'}(u)}\mu_2(\du))\bigg)
\\
&\overset{d}{\longrightarrow}\rbr{\frac{1}{\gamma}\log(\int_{\R^d}\eta(u)e^{\gamma G_1(u)}\mu_1(\du)),\frac{1}{\gamma}\log(\int_{\R^d}\eta(u)e^{\gamma G_2(u)}\mu_2(\du))},
\end{align*}
it suffices to show that the map $C(\overline{B(0,1)})\to \R$ 
\begin{align*}
f\mapsto\frac{1}{\gamma}\log(\int_{\R^d}\eta(u)e^{\gamma f(u)}\mu_i(\du ))
\end{align*}
is almost surely continuous at $G\in C(\overline{B_1(0)})$. Above $G$ is an independent copy of the components of the process $(G_1,G_2):=\lim_{\eps\to 0}(Y_{\eps,x},Y_{\eps,x'}) $, where the limit is in distribution. Thus, there are two steps: We must show
\begin{enumerate}
\item 
\begin{align*}
\P\rbr{\int_{\R^d}\eta(u)e^{\gamma Y_{\eps,z}(u)}\mu_i(\du )=0}=0 
\end{align*}
for $z=x,x'$, $i=1,2$ and $\forall \eps_0\geq\eps \text{ and }z\in\R^d$ with some fixed $\eps_0\in (0,1)$ 
\item
and that the map 
\begin{align*}
f\mapsto \int_{\R^d}\eta(u)e^{\gamma f(u)}\mu(\du )
\end{align*}
is almost surely continuous at $f=G$ for almost all realizations $\mu$ of $\nu_\gamma$. 
\end{enumerate}

For the first part we note that the exponential is always non-negative, and we have assumed that ${\int_{\R^d}\eta(x)\dx =1}$.  Therefore, by positivity we have
\begin{align*}
\E\abr{\int_{\R^d}\eta(u)e^{\gamma Y_{\eps,x}(u)}\nu_{\gamma,1}(\du )}=\int_{\R^d}\eta(u)\E[e^{\gamma Y_{\eps,x}(u)}\nu_{\gamma,1}(\du )]>0,
\end{align*}
because $Y_{\eps,x}$ is almost surely finite in the domain of $\nu$ and independent of $\nu_{\gamma,1}$, and the expectation of GMC measure is just the Lebesgue measure. Thus,  
\begin{align*}
\P\rbr{\int_{\R^d}\eta(u)e^{\gamma Y_{\eps_n,x}(u)}\mu_1(\du )>0}>0.
\end{align*}
Then for any sequence $\eps_n\to 0$ the upper bound $\eps_0$ translates to some fixed $\eps_N$ so that the event 
\begin{align*}
A:=\bigg\{ \int_{\R^d}\eta(u)e^{\gamma Y_{\eps_N,x}(u)}\mu_1(\du )=0 \biggm| n\geq N\bigg\}
\end{align*} 
is independent of the sigma-algebras $\sigma(Y_{\eps_n,x})$ for all $n<N$. Thus, the event belongs to the tail sigma-algebra and Kolmogorov's 0-1 law yields $\P(A)=0$, because we saw that $\P(A^c)>0$. We can do the same for the other case. 

Then let $f_n\to f$ in $C(\overline{B_1(0)})$. We have for any $\eps>0$ a large enough $n$ s.t. 
\begin{align*}
\bigg|\int_{\R^d}\eta(u)e^{\gamma f_n(u)}\mu(\du )-\int_{\R^d}&\eta(u)e^{\gamma f(u)}\mu(\du )\bigg|
\\
&\leq  \int_{\R^d}|\eta(u)|e^{\gamma f(u)}|1-e^{\gamma(f_n(u)-f(u))}|\mu(\du )\\
&\leq  \gamma\mu(|\eta|e^{\gamma f})\|f_n-f\|_{\infty}\exp(\gamma\|f_n-f\|_\infty)
\end{align*} 
where we have used $|1-e^x|<|x|e^{|x|}$ denoted $\|\cdot\|_\infty$ for the sup-norm in $C(\overline{B_1(0)})$. For any $\eps>0$ we can always choose $N_\eps$ so large that for $n\geq N_\eps$ the last expression is smaller than $\eps$. 

Therefore, we only need to justify the exchange of limits and integration in the Covariance. We have
\begin{align*}
\Cov(A_\eps(x),A_\eps(x'))=\E[A_\eps(x)A_\eps(x')]-\E[A_\eps(x)]\E[A_\eps(x')]
\end{align*}
We can justify the exchange of limits by uniform integrability of all relevant random variables above. This is easiest to do by the $L^p$-boundedness criterion. First we have 
\begin{align*}
\sup_{\eps>0}\E[(A_\eps(x)A_\eps(x'))^2]\leq \rbr{\sup_{\eps>0}\sqrt{\E[(A_\eps(x))^4]}}\rbr{\sup_{\eps>0}\sqrt{\E[(A_\eps(x'))^4]}},
\end{align*}
where we have used Cauchy-Schwarz. Thus, it is enough also for the individual expectations to bound the fourth moment of $A_\eps$ uniformly in $\eps>0$ and $x,x'$. By similar arguments as before we have
\begin{align*}
\E[(A_\eps(x))^4]\leq C\rbr{\E[\sup_{|u|\leq 1}|Y_\eps(u)|^4]+\E[|\log(\nu_\gamma(\eta))|^4]}<\infty 
\end{align*}
uniformly in $\eps>0$ by similar considerations as before.
\end{proof}

\subsection{Generalization to more general reference measures than the Lebesgue measure}
\label{sec:Generalization to more general reference measures than the Lebesgue measure}
We have restricted the discussion to the case, where the reference measure $\mu$ in the construction of the GMC measure is the Lebesgue measure. A natural question is: What parts of the argument would need to be modified if we wish to use more general reference measures? First of all, we might need work with subsequences even in the sub-critical case since the convergence may not be almost sure for general reference measures.

More importantly, the measures $\nu_{\gamma}^{\eps,x}$ in our fundamental representation \ref{eq: definition of nuGammaEpsx} would not be any more distributed like $\nu_{\gamma,B_1(0)}$, that is, the GMC measure constructed from a $\star$-scale invariant field $S$ on the unit ball $B_1(0)$. Indeed, at the level of approximations
\begin{align*}
\nu_{\gamma,\delta,\mu}^{\eps,x}(\du):=e^{\gamma Z_{(\eps,\delta,x)}(u)-\frac{\gamma^2}{2}\E[(Z_{(\eps,\delta,x)}(u))^2]}\eps^{-d}\mu_{\eps,x}(\du),
\end{align*}
where for any Borel set $A$, $\mu_{\eps,x}(A)=\mu(\phi_{\eps,x}^{-1}(A))$ is the image measure of $\mu$ under the map $\phi_{\eps,x}(y)= (x-y)/\eps$. Thus, $\nu_{\gamma,\mu}^{\eps,x}$ is no longer distributed like $\nu_{\gamma,B_1(0)}$. $\nu_{\gamma,\mu}^{\eps,x}$ would still be independent of $S_\eps$ and $Y_{\eps,x}$ and we also expect \Cref{prop: independence of nu eps x and nu eps x'} to hold with more general reference measures. 

The above fact that $\nu_{\gamma,\mu}^{\eps,x}$ is no longer distributed like $\nu_{\gamma,B_1{0}}$ means that we cannot any more automatically use \Cref{thm: Existence and moments of star scale invariant GMC} to estimate the moments in the proof of \ref{thm:field from the measure star scale invariant case}. Thus, we would need to estimate
\begin{align*}
\sup_{x\in \supp(\psi)}\sup_{\eps>0}\E\abr{\rbr{\int_{\R^d}\eta(u)\nu_{\gamma}^{\eps,x}(\du )}^a}
\end{align*}
for $a\in (-\delta,\delta)$ for some positive $\delta$. This would require some scaling and translation properties from the reference measure. 

Lastly, also the argument for the convergence in distribution of $(A_{\eps}(x),A_\eps(x'))$ used in the proof of \Cref{thm:field from the measure star scale invariant case} would need to be modified for a general reference measure $\mu$.

\section{Applications}
\label{sec:Applications}
In this section, we first give two examples of mildly non-Gaussian cases, where our results apply. For the second example we have that our results hold even with \Cref{def: non gaussian multiplicative chaos 2} for the non-Gaussian multiplicative chaos. Then we give two applications of our methods, both of which have the flavour that a small set of parameters determines the object of interest for all relevant values of these parameters. In these two latter applications, we only consider the Gaussian case. 
\subsection{Examples}
\label{sec:Examples}
In this section, we give two examples of mildly non-Gaussian log-correlated fields. The first one comes from quantum or statistical field theory. Namely, the massive sine-Gordon field $\Phi_{\mathrm{sG}}$. The law of this field is formally given by
\begin{align}
\label{eq:formal definition of sine-Gordon}
\nu_{\mathrm{sG}}(\d\Phi)=\frac{1}{Z}e^{-2z\int_{\Lambda}\cos(\sqrt{\beta}\Phi(x))dx}\nu_{\mathrm{GFF}(m)}(\d \Phi),
\end{align}
where $Z$ is a normalization constant,  $z\in \R$, $\beta\in[0,8\pi)$, $\Lambda\in\R^d$ a bounded domain and $\nu_{\mathrm{GFF}(m)}$ is the law of the massive Gaussian free field (GFF). The massive GFF is a log-correlated Gaussian field with a covariance kernel given by the Greens function of the massive Laplace operator $-\Lap+m^2$ on $\Lambda$ with appropriate boundary conditions. As mentioned already, the equality in \Cref{eq:formal definition of sine-Gordon} is only formal equivalence and the actual construction requires renormalization. The setup relevant to us is $\beta\in [0,6\pi)$, $\Lambda=\BB{T}^2$, that is, the two-dimensional unit torus. In this case, the sine-Gordon field has been constructed as a random element in $H^{-s}(\BB{T})$, $s>0$, and the difference of the GFF and the sine-Gordon field is a deterministically bounded Hölder-continuous field \cite[Theorem 1.2]{BaHo22a}. \Cref{thm:field from the measure for mildly non-Gaussian case} applies as is, but using \Cref{def: non gaussian multiplicative chaos 2} would require non-trivial estimates. It is easy to show that the Hölder part does have exponential moments, but we will not try to verify the assumptions in \Cref{subsec:Discussion about the other possible definition of non-Gaussian multiplicative chaos in theorem C} for this example. In the paper \cite{BaHo22a}, the field $\Phi_{\mathrm{sG}}$ is constructed as the final solution with $t=0$ to certain backwards SDE on $[0,\infty]$. The approximations to the field are then the $t>0$ solutions. To verify the assumptions for using \Cref{def: non gaussian multiplicative chaos 2}, one would probably need to use these approximations.

Our next example comes from the realm of probabilistic number theory. In studying the statistical behaviour of the Riemann zeta function $\zeta$ on the critical line $\Re(z)=\half$, the authors of \cite{SaWe20a} constructed a random Schwartz distribution called the randomized zeta function $\zeta_{\mathrm{rand}}(\half+ix)$, $x\in\R$, where $\zeta_{\mathrm{rand}}(s)$ to the right of the critical line $\Re(s)=\half$ is defined as the randomized Euler product 
\begin{align*}
\zeta_{\mathrm{rand}}(s):=\prod_{k=1}^\infty\rbr{\frac{1}{1-p_k^{-s}e^{2\pi i\theta_k}}}
\end{align*}
where $p_k$ is the $k^{\mathrm{th}}$ prime number and $(\theta_k)_{k\in\N}$ is an i.i.d sequence of uniform random variables on $[0,1]$. Note that for $\Re(s)=\half$ this expression is formal and indeed converges only as a Schwartz distribution. This distribution turns out to be related to complex GMC. In this context, the field that fits into our framework of mildly non-Gaussian log-correlated fields is formally $X(x):=\log(|\zeta_{\mathrm{rand}}(\half+ix)|)$ 
For the approximation $\zeta_{N,\mathrm{rand}}$, where one simply truncates the randomised Euler product at $N\in\N$ the authors show \cite[Theorem 1.8]{SaWe20a} that 
\begin{align}
\label{eq: multiplicative chaos measure and randomised zeta function}
\frac{|\zeta_{N,\mathrm{rand}}(\half+ix)|^{\gamma}}{\E\abr{|\zeta_{N,\mathrm{rand}}(\half+ix)|^{\gamma}}}dx:=\frac{e^{\gamma X_N(x)}}{\E\abr{e^{\gamma X_N(x)}}}dx,\quad x\in [0,1]
\end{align}
converges almost surely in the weak topology  for $\gamma<\sqrt{2}$ to a non-trivial random measure $\lambda_\gamma$ such that
\begin{align*}
\lambda_\gamma(\dx)=f_\gamma(x)\nu_\gamma(\dx),
\end{align*}
where $\nu_\gamma$ is a GMC measure and $f_\gamma$ is positive and a continuous random function such that $f_\gamma$ and $1/f_\gamma$ and their derivatives of any order are a.s. bounded. 

Furthermore, they show \cite[Theorem 1.7]{SaWe20a} that for any fixed $N\in\N$ and on any $I_A:=[-A,A]$, there exists a decomposition 
\begin{align*}
\log(\zeta_{N,\mathrm{rand}}(1/2+ix))=\Gcal_N(x)+\Ecal_N(x), \quad x\in I_A,
\end{align*}
where $\Gcal_N$ is a Gaussian process and $\Ecal_N$ is a smooth random function that converges a.s. to $\Ecal \in C^\infty (I_A)$. Furthermore, the following uniform exponential moments
\begin{align*}
\E\abr{\exp(\lambda\sup_{N\geq 1}\rbr{\sum_{k=0}^l\norm{\Ecal_N^{(k)}}_{L^\infty[0,1]}})}
\end{align*}
exist for all $\lambda>0$ and $l\geq  0$. Above $\Ecal_N^{(k)}$ is the $k^{\mathrm{th}}$ derivative of $\Ecal_N$ for $k\geq 1$ and the function itself for $k=0$. 

Since
\begin{align*}
X_n=\log(|\zeta_{N,\mathrm{rand}}(1/2+ix)|)=\Re(\Gcal_N(x))+\Re(\Ecal_N(x)),
\end{align*}
the relevant Gaussian field for us is $G:=\lim_{N\to\infty}\Re(\Gcal_N(x))$ on $[0,1]$. In \cite[Section 6]{SaWe20a}, the authors prove the convergence of the measures in \Cref{eq: multiplicative chaos measure and randomised zeta function} using Kahane's original framework, and along the way they show that the covariance kernels converge locally uniformly on $[0,1]\setminus \{x=y\}$ to a logarithmic kernel, that is,  
\begin{align*}
\Cov(\Re(\Gcal_N(x)),\Re(\Gcal_N(y)))\overset{n\to\infty}{\longrightarrow}\half\log(|x-y|^{-1})+g(x-y),
\end{align*}
where $g$ is even smooth on the enlarged domain $[-2,2]$. The process $\Gcal_N$ has an explicit expression given by 
\begin{align}
\Gcal_N(x):=\sum_{k=1}^N\frac{1}{\sqrt{2p_k}}(W_k^R+iW_k^I)p_k^{-ix}
\end{align}
where $\{W_k^R\}$ and $\{W_k^I\}$ are two sequences of independent standard Gaussian random variables that are also mutually independent.  

Then we need to check that the field $X:=\lim_{N\to\infty} X_N$ fulfils the conditions discussed in \Cref{subsec:Discussion about the other possible definition of non-Gaussian multiplicative chaos in theorem C}. Let us consider the function
\begin{align*}
g_N(x):=\frac{\E\abr{e^{\gamma \Gcal_N(x)}}}{\E\abr{e^{\gamma X_N(x)}}}=\prod_{k=1}^N\frac{\E\abr{e^{\frac{\gamma}{\sqrt{2p_k}}\Re([W_k^R+iW_k^I]p^{-ix})}}}{\E\abr{\abs{1-p_k^{-\half}p_k^{-ix}e^{i2\pi \theta_k}}^{-\gamma}}}:=\prod_{k=1}^N\frac{E_k^G(x)}{E_k^X(x)}.
\end{align*}
Firstly, both the nominator and denominator are independent of $x$, which can be seen from the fact that $\Gcal_N$ and $X_N$ are stationary processes, that is, they are invariant under translation or by doing an explicit computation. Thus, it is enough to consider $E_k^G\equiv E_k^G(0)$ and $E_k^X\equiv E_k^X(0)$. By a standard computation, we have
\begin{align*}
E_k^G=e^{\frac{\gamma^2}{4p_k}}.
\end{align*}
Next consider the auxiliary function
\begin{align*}
f(\lambda):=\log(\E\abr{|1-\lambda e^{i2\pi \theta}|})=\log(\int_0^{2\pi}|1-\lambda e^{is}|^{-\gamma}\frac{ds}{2\pi})
\end{align*}
where $\theta$ is uniformly distributed on $[0,1]$ and $\lambda\in \mathbb{D}\subset \C$ with $\mathbb{D}$ denoting the open unit disc. Then we can write 
\begin{align*}
\int_0^{2\pi}|1-\lambda e^{is}|^{\gamma}\frac{ds}{2\pi}&=\int_0^{2\pi}(1-\frac{\gamma}{2} \lambda e^{is}+\bigO_\gamma(\lambda^2e^{i2s}))(1-\frac{\gamma}{2}\bar{\gamma}e^{-is}+\bigO_\gamma(\bar{\lambda}^2e^{-i2s})\frac{ds}{2\pi}
\\
&=1+\frac{\gamma^2}{4}|\lambda|^2+\bigO_\gamma(|\lambda|^4)
\end{align*} 
by Fourier-orthogonality. Above the notation $\bigO_\gamma$ means that the implied constant depends on $\gamma$. Therefore,
\begin{align*}
f(\lambda)=\frac{\gamma^2}{4}|\lambda|^2+\bigO_\gamma(|\lambda|^4)
\end{align*}
and we obtain
\begin{align*}
g_N(x)=\prod_{k=1}^Ne^{\frac{\gamma^2}{4p_k}}e^{-f(p_k^{-\half})}=e^{-\sum_{k=1}^N\bigO_\gamma(p_k^{-2})}=\exp(-\bigO_\gamma\rbr{\sum_{k=1}^Np_k^{-2}}).
\end{align*}
Since the series in the exponent converges, the limit $g(x):=\lim_{n\to\infty}g_N(x)$ is a non-zero constant.
Thus, we have a non-trivial example where our results will apply even with \Cref{def: non gaussian multiplicative chaos 2}.
\subsection{Thick points determine the field everywhere}
\label{sec:Thick points determine the field everywhere}
It is standard fact that if $f\colon \R^d\supset D\to\R$ is a continuous function and $f=g$ on a dense subset $D'\subset D$, then $f=g$ on the whole domain $D$. However, the fields we are considering are not continuous functions, but Schwartz distributions so it would be surprising if something like the above would still hold. It turns out that there is enough structure in the set of thick points of the field that we can formulate something similar to the above statement.  

In this section, we will be working in the sub-critical regime $\gamma<\sqrt{2d}$. A carrier of the GMC measure $\mu_{\gamma,G}$ is contained in the set of thick points
\begin{align*}
\Tcal_{\gamma,G}:=\bigg\{x\in D\,\,\bigg|\,\, \lim_{\eps\to 0}\frac{G_\eps(x)}{\log(\eps^{-1})}=\gamma\bigg\}
\end{align*}
and furthermore this set has Hausdorff dimension less than $d-\gamma^2/2$ \cite{Ka85a}. 
Thus, the set $\Tcal_{\gamma,G}$ is a fractal set in $D$. It is not obvious that it is even dense. Therefore, there should not be any reason to a priori expect that the values of the field $G$ on $\Tcal_{\gamma,G}$ would completely determine it. Using the results in \cite{JuLaWe24a} we will formulate a slightly weaker statement, which is still non-trivial and non-intuitive. Namely, that the field $G$ is measurable with respect to the sigma-algebra generated by the uniform measures on the set of thick points at each spatial scale. To be more precise, let $\{\eps_n\}_{n\in\N}$ with $\eps_n<1$ for all $n\in\N$ and $\eps_n\to 0$ as $n \to \infty$ be an arbitrary deterministic sequence, and denote
\begin{align*}
\Tcal_{\gamma,G}^{(n)}:=\{x\in D \mid G_{\eps_n}(x)\geq \gamma \log(\eps_n^{-1})\}
\end{align*}
and 
\begin{align*}
\rho_{\gamma,G,n}(dx):=\frac{\1\{\Tcal_{\gamma,G}^{(n)}\}}{\P(\Tcal_{\gamma,G}^{(n)})}dx.
\end{align*}
Then, $\rho_{\gamma,G,n}\to \mu_{\gamma,G}$ in the vague topology of convergence of measures and in probability \cite[Corollary 1.9]{JuLaWe24a}\footnote{Recall that if $\mu_n(f)\to\mu(f)$ for every $f\in C_c(D)$, then we say that $\mu_n$ converges to $\mu$ in the vague topology of convergence of measures on $D$}. Similar earlier results also exist in the special cases of discrete GFF \cite{BiLo19a} and the logarithm of the zeta function \cite{ArHaKi22a}. Then since there exists a subsequence such that the convergence is almost sure, there exists a version of $\nu_{\gamma,G}$, that is measurable with respect to
\begin{align*}
\Sigma_{\gamma,G}:=\bigcap_{N_0=1}^\infty\sigma(\rho_{\gamma,G,n}, \,\, n\geq N_0):=\bigcap_{N_0=1}^\infty\sigma(\rho_{\gamma,G,n}(f), \,\, f\in C_c(D),\,\, n\geq N_0).
\end{align*}  
Similarly, our reconstruction yields that for any $\gamma$ there exists a version the field $G$ that is measurable with respect to
\begin{align*}
\sigma(\mu_{\gamma,G}):=\sigma(\mu_{\gamma,G}(f),\,\, f\in C_c).
\end{align*}
Thus, since a composite function of two measurable functions is again measurable we get the following result.
\begin{theorem}
\label{thm:Measurability of G with respect to the thick points}
For any $\gamma\in[0,\sqrt{2d})$, the log-correlated Gaussian field $G$ is measurable with respect to $\Sigma_{\gamma,G}$. 
\end{theorem}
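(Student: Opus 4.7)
The strategy is a two–step chain of measurability: first show that a version of $\nu_{\gamma,G}$ is measurable with respect to the tail $\sigma$-algebra $\Sigma_{\gamma,G}$, and then invoke \Cref{thm:field from the measure general log correlated Gaussian case} to conclude that a version of $G$ is measurable with respect to $\sigma(\nu_{\gamma,G})$. Composing these two measurabilities yields the claim, since the composition of measurable maps is measurable.

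For the first step, I start from the convergence $\rho_{\gamma,G,n}\to \nu_{\gamma,G}$ in probability in the vague topology of \cite[Corollary 1.9]{JuLaWe24a}. Fixing a countable dense family $\Fcal\subset C_c(D)$ and using a standard diagonal subsequence extraction, one obtains a deterministic subsequence $(n_k)_{k\in\N}$ along which $\rho_{\gamma,G,n_k}(f)\to \nu_{\gamma,G}(f)$ almost surely for every $f\in\Fcal$. Define $\tilde\nu(f):=\limsup_k \rho_{\gamma,G,n_k}(f)$ for each $f\in\Fcal$; this random quantity is measurable with respect to $\sigma(\rho_{\gamma,G,n_k}:k\geq K_0)$ for every $K_0$, so it is measurable with respect to the tail $\sigma$-algebra, which is a subset of $\Sigma_{\gamma,G}$. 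Almost surely $\tilde\nu(f)=\nu_{\gamma,G}(f)$ for all $f\in\Fcal$, and by density this uniquely extends to a Radon measure agreeing with $\nu_{\gamma,G}$ almost surely. Thus $\tilde\nu$ is a version of $\nu_{\gamma,G}$ which is $\Sigma_{\gamma,G}$-measurable.

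For the second step, \Cref{thm:field from the measure general log correlated Gaussian case} gives that for every $\psi\in C_c^\infty(D)$ supported well inside $D$, a deterministic functional of $\nu_{\gamma,G}$ converges in probability to $\innerp{G}{\psi}$. Passing to an a.s.\ subsequence and taking a limsup again, $\innerp{G}{\psi}$ equals almost surely a $\sigma(\nu_{\gamma,G})$-measurable random variable; since such $\psi$ are dense in the test-function space on any compact subdomain (and $D$ is exhausted by such subdomains), one concludes that a version of $G$, viewed as a random element of $\Dcal'(D)$, is $\sigma(\nu_{\gamma,G})$-measurable. Substituting the $\Sigma_{\gamma,G}$-measurable version $\tilde\nu$ from step one, we obtain that this version of $G$ is in fact $\Sigma_{\gamma,G}$-measurable.

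The main conceptual point to be careful about, rather than an actual obstacle, is that both measurability statements only hold for a \emph{version} of the object; hence throughout the argument one must track that the modifications agree almost surely and that measurability is preserved under the countable operations $\limsup$, extension from a dense family, and composition. The rest is essentially bookkeeping; the genuine mathematical content, namely the convergence of $\rho_{\gamma,G,n}$ to $\nu_{\gamma,G}$ and the reconstruction of $G$ from $\nu_{\gamma,G}$, is provided by \cite[Corollary 1.9]{JuLaWe24a} and \Cref{thm:field from the measure general log correlated Gaussian case} respectively.
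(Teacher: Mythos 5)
Your proposal matches the paper's argument exactly: the paper also uses the two-step chain (subsequence extraction from \cite[Corollary 1.9]{JuLaWe24a} to obtain a $\Sigma_{\gamma,G}$-measurable version of $\nu_{\gamma,G}$, then \Cref{thm:field from the measure general log correlated Gaussian case} to obtain a $\sigma(\nu_{\gamma,G})$-measurable version of $G$, concluding by composition of measurable maps). The extra bookkeeping you supply — diagonal extraction over a countable dense family, $\limsup$ along the subsequence, tail-measurability — is correct and simply fills in details the paper leaves implicit.
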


\subsection{Knowing the GMC for one value of $\gamma$ determines it for all the other appropriate values of $\gamma$}
\label{sec:Knowing the GMC for one value of gamma determines it for all the other appropriate values of gamma}
In this section, we will only consider the $\star$-scale invariant fields in the $L^2$-phase of the sub-critical regime, that is, $\gamma\in [0,\sqrt{d})$. However, we expect that the result of this section can be generalized. 

On an abstract level with a similar measurability argument as in the previous section, we have that the GMC measure $\nu_{\gamma_0}$ with fixed $\gamma_0$ determines the GMC measures $\nu_\gamma$ for all other appropriate values of $\gamma$, where $\nu_\gamma\equiv \nu_{gamma,S}$. This is, because the underlying field is determined by the field independently of the value of $\gamma_0$ used in the reconstruction and the field in turn determines the GMC measure for all values of $\gamma$.  As an application of our methods, we show that this can be demonstrated more concretely. That is, we show that the construction of the GMC measure can be done using the approximation of the field used in our reconstruction result. Let us define for fixed $\gamma_0\in [0,\sqrt{d})$
\begin{align*}
S_{\eps,\gamma_0}(x):=\frac{1}{\gamma_0}\log(\int_{\R^d}\eta_\eps(y-x)\nu_{\gamma_0,S}(\dy))-F_{\gamma_0,\eps,\eta}(x).
\end{align*}
for $x\in K\subset D$ with $K$ a compact set such that 
$\dist(K,\partial D)\leq \eps$ for all $0<\eps<\eps_0 \leq 1$ with some fixed $\eps_0$.
We will show that for any other $\gamma\in [0,\sqrt{d})$ the limit
\begin{align*}
\lim_{\eps\to 0}\nu_{\gamma,\gamma_0,\eps}(\dx):=\lim_{\eps\to 0}\frac{e^{\gamma S_{\eps,\gamma_0}(x)}}{\E[e^{\gamma S_{\eps,\gamma_0}(x)}]}\dx
\end{align*} 
exists in probability with respect to the weak topology on $K$, is independent of $\gamma_0$ and equals the multiplicative chaos measure $\nu_{\gamma}$. To this end, we will show the convergence in probability for a fixed test function $f\in C_(K)$, which then implies the convergence in the weak topology by a standard argument. 

\begin{theorem}
\label{thm: one gamma determines GMC for all gamma}
Let $f\in C(K)$ and $K\subset D$ as described above. Also let $\gamma,\gamma_0\in [0,\sqrt{d})$ with $\gamma_0$ fixed. Then
\begin{align*}
\lim_{\eps\to 0}\nu_{\gamma,\gamma_0,\eps}(f)\overset{\P}{\longrightarrow}\nu_{\gamma}(f).
\end{align*}
\end{theorem}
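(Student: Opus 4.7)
The plan is to reduce the convergence to an $L^2$-estimate using the fundamental representation (equation 2.9) together with a Cameron-Martin shift; the required off-diagonal decorrelation then follows from a perturbed version of \Cref{prop: convergence of Yx and Yx'}. Set $q := \gamma/\gamma_0$ (the case $\gamma_0 = 0$ is trivial) and
\[
B_\eps(x) := \int_{\R^d}\eta(u)\,e^{\gamma_0 Y_{\eps,x}(u)}\,\nu_{\gamma_0}^{\eps,x}(\du).
\]
Direct application of (2.9) inside the exponential defining $S_{\eps,\gamma_0}$ gives $e^{\gamma S_{\eps,\gamma_0}(x)} = c_\eps\, e^{\gamma S_\eps(x)}\, B_\eps(x)^{q}$ for a deterministic $c_\eps$ which, by joint stationarity of $(S_\eps(x), B_\eps(x))$ in $x$, does not depend on $x$ and cancels in the normalisation. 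This yields the key identity
\[
\nu_{\gamma,\gamma_0,\eps}(\dx) = \frac{B_\eps(x)^q}{\tilde C_\eps}\,\nu_{\gamma,\eps}(\dx), \qquad \tilde C_\eps := \E\abr{B_\eps(x)^q\,e^{\gamma S_\eps(x) - (\gamma^2/2)\log\eps^{-1}}},
\]
where $\nu_{\gamma,\eps}$ is the usual $\star$-scale invariant GMC approximation of $\nu_\gamma$. Since $\gamma < \sqrt{d}$, \Cref{thm: Existence and moments of star scale invariant GMC} with uniform integrability gives $\nu_{\gamma,\eps}(f) \to \nu_\gamma(f)$ in $L^2$, so it suffices to prove $E_\eps := \int f(x)\, h_\eps(x)\, \nu_{\gamma,\eps}(\dx) \to 0$ in $L^2$ with $h_\eps(x) := B_\eps(x)^q/\tilde C_\eps - 1$.

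Expanding $\E[E_\eps^2]$ and applying a Cameron-Martin shift to absorb $e^{\gamma(S_\eps(x) + S_\eps(x')) - \gamma^2\log\eps^{-1}}$ yields
\[
\E[E_\eps^2] = \int_{K\times K} f(x)f(x')\, e^{\gamma^2 K_\eps(x,x')}\, \tilde\E_{x,x'}\abr{h_\eps(x) h_\eps(x')}\, \dx \dx',
\]
where under $\tilde\E_{x,x'}$ the field $S_\eps$ is translated by $\gamma(K_\eps(\cdot,x) + K_\eps(\cdot,x'))$. I split this integral into a diagonal $\{|x-x'|\leq 3\eps\}$ and an off-diagonal region. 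On the diagonal, $e^{\gamma^2 K_\eps(x,x')} \leq C\eps^{-\gamma^2}$ while $\tilde\E_{x,x'}[|h_\eps(x) h_\eps(x')|]$ is bounded uniformly in $\eps$ via H\"older applied to the moment bound $q < q_c(\gamma_0) = 2d/\gamma_0^2$ from \Cref{thm: Existence and moments of star scale invariant GMC} for $\nu_{\gamma_0,B_1(0)}$ (the hypothesis $\gamma\gamma_0 < d$ keeps us comfortably inside the moment window), giving a contribution of order $\eps^{d-\gamma^2}\to 0$. On the off-diagonal, \Cref{prop: independence of nu eps x and nu eps x'} gives that $\nu_{\gamma_0}^{\eps,x}|_{\supp\eta}$ and $\nu_{\gamma_0}^{\eps,x'}|_{\supp\eta}$ are independent (and both independent of $S_\eps$), so conditioning on $S_\eps$ factorises
\[
\tilde\E_{x,x'}\abr{h_\eps(x) h_\eps(x') \,\big|\, S_\eps} = \tilde h_\eps(x) \tilde h_\eps(x'), \quad \tilde h_\eps(x) := \frac{\varphi_q(Y_{\eps,x})}{\tilde C_\eps} - 1,
\]
where $\varphi_q(y) := \E[(\int \eta(u) e^{\gamma_0 y(u)}\,\nu_{\gamma_0,B_1(0)}(\du))^q]$.

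Under the shift, $Y_{\eps,x}$ picks up the deterministic drift $u \mapsto \gamma\int_1^{\eps^{-1}}[k_1(s\eps|u|)-1]\,\ds/s$, which converges to the bounded profile $\gamma h(u)$, plus a cross-term from the tilt by $S_\eps(x')$ that is $o(1)$ uniformly once $|x-x'|$ is bounded below. Adapting the covariance computation of \Cref{prop: convergence of Yx and Yx'} to the shifted pair $(Y_{\eps,x}, Y_{\eps,x'})$ shows that the joint law still converges in $C(\overline{B_1(0)})$ to two \emph{independent} continuous Gaussian processes sharing the common deterministic mean $\gamma h$, while the definition of $\tilde C_\eps$ gives $\tilde\E_{x,x'}[\varphi_q(Y_{\eps,x})] = \tilde C_\eps(1+o(1))$, hence $\tilde\E_{x,x'}[\tilde h_\eps(x)] \to 0$. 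The continuous mapping theorem combined with uniform integrability from higher moment bounds on $\varphi_q(Y_{\eps,x})$ (valid since $q < q_c(\gamma_0)$ strictly) then yields $\tilde\E_{x,x'}[\tilde h_\eps(x) \tilde h_\eps(x')] \to 0$ pointwise for each fixed $x \neq x'$; dominated convergence with the integrable envelope $e^{\gamma^2 K_\eps(x,x')} \lesssim |x-x'|^{-\gamma^2}$ closes the argument. The main obstacle is this last step: one must verify that the decorrelation of \Cref{prop: convergence of Yx and Yx'} is preserved under the Cameron-Martin perturbation and that $\tilde\E_{x,x'}[\tilde h_\eps(x)] \to 0$ with enough uniformity in $(x,x') \in K\times K$ to apply dominated convergence against the singular envelope.
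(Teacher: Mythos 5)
Your proposal is correct and follows essentially the same route as the paper: rewrite $e^{\gamma S_{\eps,\gamma_0}(x)}$ via the fundamental representation (2.9) to factor out $B_\eps(x)^{\gamma/\gamma_0}$, pass to an $L^2$ estimate, apply a Cameron--Martin shift to absorb $e^{\gamma(S_\eps(x)+S_\eps(x'))}$, then use the independence from \Cref{prop: independence of nu eps x and nu eps x'}, the convergence in law from \Cref{prop: convergence of Yx and Yx'} (noting the deterministic drift converges to the bounded profile $l(u)$), and the $(1+\delta)$-moment bounds on $\nu_{\gamma_0,B_1(0)}$ to pass to the limit against the integrable envelope $|x-x'|^{-\gamma^2}$. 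The paper organizes the calculation slightly differently --- it expands the $L^2$ difference directly into four terms governed by a unified expectation $\E[e^{\gamma(\sigma_x S_\eps(x)+\sigma_y S_\eps(y)+a_x A_{\gamma_0,\eps}(x)+a_y A_{\gamma_0,\eps}(y))}]$ with $\sigma_z,a_z\in\{0,1\}$ and then proves a single lemma that each factor is $e^{(\gamma^2/2)K_\eps(x,y)}F_\eps^{(i)}(x,y)$ with $F_\eps^{(i)}\to 1$ and a uniform integrable envelope --- whereas you factor out the already-known $L^2$ convergence $\nu_{\gamma,\eps}(f)\to\nu_\gamma(f)$ first and make the diagonal/off-diagonal split explicit; the ``obstacle'' you flag at the end is not a real gap: pointwise convergence for $x\neq x'$ together with the uniform $(1+\delta)$-moment bound is all dominated convergence needs, which is exactly how the paper closes the argument.
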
 

\begin{remark}
Since the counter term $F_{\gamma_0,\eps,\eta}$ in $S_{\eps,\gamma_0}$ is deterministic, it will cancel out in $\nu_{\gamma,\gamma_0,\eps}$ even before we take the above limit. Thus, we can work with 
\begin{align*}
\tilde{S}_{\eps,\gamma_0}(x):=\frac{1}{\gamma_0}\log(\int_{\R^d}\eta_\eps(y-x)\nu_{\gamma_0,S}(\dy)).
\end{align*} 
Henceforth, we will drop the tilde from the notation.
\end{remark}
Note that we can write
\begin{align*}
\frac{e^{\gamma S_{\eps,\gamma_0}(x)}}{\E[e^{\gamma S_{\eps,\gamma_0}(x)}]}=\frac{\rbr{\int_{\R^d}\eta_\eps(x-y)\nu_{\gamma_0,S}(\dy)}^{\frac{\gamma}{\gamma_0}}}{\E\abr{\rbr{\int_{D}\eta_\eps(x-y)\nu_{\gamma_0,S}(\dy)}^{\frac{\gamma}{\gamma_0}}}}.
\end{align*}
Recall that the threshold for the existence of the moments is $q_c=2d/\gamma^2$ so that we need to have 
\[2d/\gamma_0^2-\gamma/\gamma_0=\gamma_0^{-1}\sqrt{2d}(\sqrt{2d}/\gamma_0-\gamma/\sqrt{2d})>0,\] which is always satisfied for $\gamma,\gamma_0\in [0,\sqrt{d})$. Thus, the normalizing factor in the denominator is well-defined.
\begin{proof}[Proof of \Cref{thm: one gamma determines GMC for all gamma}]

\noindent
We will show that $\lim_{\eps\to 0}E_{\gamma,\gamma_0,\eps}=0$, where
\begin{align*}
E_{\gamma,\gamma_0,\eps}:=\E\abr{\rbr{\int_{D}f(x)\frac{e^{\gamma S_{\eps,\gamma_0}(x)}}{\E[e^{\gamma S_{\eps,\gamma_0}(x)}]}\dx-\int_{D}f(x)e^{\gamma S_{\eps}(x)-\frac{\gamma^2}{2}\E[(S_{\eps}(x))^2]}\dx}^2}.
\end{align*} 
First recall the fundamental representation 
\begin{align*}
\int_{\R^d}\eta_\eps(x-y)\nu_{\gamma_0,S}(\dy)=e^{\gamma_0 S_\eps(x)-\frac{\gamma_0^2}{2}\E[(S_\eps(x))^2]}\int_{\R^d}\eta(u)e^{\gamma_0 Y_{\eps,x}(u)}\nu_{\gamma_0}^{\eps,x}(\du).
\end{align*}
given in \Cref{eq: definition of nuGammaEpsx}.
Thus, we can rewrite 
\begin{align*}
\frac{e^{\gamma S_{\eps,\gamma_0}(x)}}{\E[e^{\gamma S_{\eps,\gamma_0}(x)}]}&=\frac{e^{\gamma S_\eps(x)}\rbr{\int_{\R^d}\eta(u)e^{\gamma_0 Y_{\eps,x}(u)}\nu_{\gamma_0}^{\eps,x}(\du)}^\frac{\gamma}{\gamma_0}}{\E\abr{e^{\gamma S_\eps(x)}\rbr{\int_{\R^d}\eta(u)e^{\gamma_0 Y_{\eps,x}(u)}\nu_{\gamma_0}^{\eps,x}(\du)}^\frac{\gamma}{\gamma_0}}}
\\
&=\frac{e^{\gamma (S_\eps(x)+A_{\eps}(x))}}{\E\abr{e^{\gamma (S_\eps(x)+A_{\eps}(x))}}}.
\end{align*}
where we have used the notation of \Cref{sec: Proof of the first main theorem} to condense the notation. That is,
\begin{align*}
A_\eps(x)=\frac{1}{\gamma_0}\log(\int_{\R^d}\eta(u)e^{\gamma_0 Y_{\eps,x}(u)}\nu_{\gamma_0}^{\eps,x}(\du))\equiv A_{\gamma_0,\eps}(x).
\end{align*}
Above on the RHS we have added the subscript $\gamma_0$ to highlight the importance of the dependence of $A_{\eps}$ on $\gamma_0$, and we will continue to do so in this section. We can also rewrite 
\begin{align*}
E_{\gamma,\gamma_0,\eps}=\E\abr{\rbr{\int_{\R^d}f(x)\frac{e^{\gamma S_\eps(x)}}{\E\abr{e^{\gamma S_\eps(x)}}}\cbr{\frac{\E\abr{e^{\gamma S_\eps(x)}}e^{\gamma A_{\gamma_0,\eps}(x)}}{\E\abr{e^{\gamma (S_\eps(x)+A_{\gamma_0,\eps}(x))}}}-1}\dx}^2}.
\end{align*}
Writing out the square yields
\begin{align}
\int_{D\times D}&\bigg(f(x)f(y)\frac{
e^{\gamma (S_\eps(x)+S_\eps(y))}}{\E\abr{e^{\gamma S_\eps(x)}}\E\abr{e^{\gamma S_\eps(y)}}}\nonumber
\\
&\qquad\qquad\times\prod_{u\in\{x,y\}}\cbr{\frac{\E\abr{e^{\gamma S_\eps(u)}}e^{\gamma A_{\gamma_0,\eps}(u)}}{\E\abr{e^{\gamma (S_\eps(u)+A_{\gamma_0,\eps}(u)})}}-1}\bigg)dxdy
\nonumber
\\
=&\int_{D\times D}f(x)f(y)\bigg(\frac{e^{\gamma (S_\eps(x)+S_\eps(y)+A_{\gamma_0,\eps}(x)+A_{\gamma_0,\eps}(y))}}{\E\abr{e^{\gamma (S_\eps(x)+A_{\gamma_0,\eps}(x))}}\E\abr{e^{\gamma (S_\eps(y)+A_{\gamma_0,\eps}(y))}}} \nonumber
\\
&-\frac{e^{\gamma (S_\eps(x)+S_\eps(y)+A_{\gamma_0,\eps}(x))}}{\E\abr{e^{\gamma S_\eps(y)}}\E\abr{e^{\gamma (S_\eps(x)+A_{\gamma_0,\eps}(x))}}}-\frac{e^{\gamma (S_\eps(x)+S_\eps(y)+A_{\gamma_0,\eps}(y))}}{\E\abr{e^{\gamma S_\eps(x)}}\E\abr{e^{\gamma (S_\eps(y)+A_{\gamma_0,\eps}(y))}}} \nonumber
\\
\label{eq: gamma_0 difference of squares}
&+\frac{
e^{\gamma (S_\eps(x)+S_\eps(y))}}{\E\abr{e^{\gamma S_\eps(x)}}\E\abr{e^{\gamma S_\eps(y)}}}\bigg)dxdy.
\end{align}

We can use Fubini and dominated convergence, if we prove upper bounds term-wise for each four terms, when we break up the above integral. This is because each term is positive. The last term is then the one that sets the benchmark towards which we expect each term to converge. We have
\begin{align}
\label{eq:upper bound for the covariance exponential}
\frac{\E\abr{e^{\gamma(S_\eps(x)+S_\eps(y))}}}{\E\abr{e^{\gamma S_\eps(x)}}\E\abr{e^{\gamma S_\eps(y)}}}=e^{\frac{\gamma^2}{2}\E[S_\eps(x)S_{\eps}(y)]}= \bigO(|x-y|^{-\frac{\gamma^2}{2}}),
\end{align}
where the implied constant is independent of $\eps$ and the last expression is integrable on $D\times D$ for $\gamma\in [0,\sqrt{d})$. 

Thus, we need to prove the following lemma
\begin{lemma}
We have the following decompositions.
\begin{align*}
\frac{\E[e^{\gamma (S_\eps(x)+S_\eps(y)+A_{\gamma_0,\eps}(x)+A_{\gamma_0,\eps}(y))}]}{\E\abr{e^{\gamma (S_\eps(x)+A_{\gamma_0,\eps}(x))}}\E\abr{e^{\gamma (S_\eps(y)+A_{\gamma_0,\eps}(x))}}}:&=e^{\frac{\gamma^2}{2}\E[S_\eps(x)S_{\eps}(y)]}F_{\eps}^{(1)}(x,y)
\\
\frac{\E[e^{\gamma (S_\eps(x)+S_\eps(y)+A_{\gamma_0,\eps}(x))}]}{\E\abr{e^{\gamma S_\eps(y)}}\E\abr{e^{\gamma (S_\eps(x)+A_{\gamma_0,\eps}(x))}}}:&=e^{\frac{\gamma^2}{2}\E[S_\eps(x)S_{\eps}(y)]}F_{\eps}^{(2)}(x,y)
\\
\frac{\E[e^{\gamma (S_\eps(x)+S_\eps(y)+A_{\gamma_0,\eps}(y))}]}{\E\abr{e^{\gamma S_\eps(x)}}\E\abr{e^{\gamma (S_\eps(y)+A_{\gamma_0,\eps}(y))}}}:&=e^{\frac{\gamma^2}{2}\E[S_\eps(x)S_{\eps}(y)]}F_{\eps}^{(3)}(x,y)
\\
&=e^{\frac{\gamma^2}{2}\E[S_\eps(x)S_{\eps}(y)]}F_{\eps}^{(2)}(y,x),
\end{align*}
where $F_\eps^{(i)}(x,y)\to 1$ as $\eps\to 0$ for $i=1,2,3$ and for all $x,y\in D$. Furthermore, there exists $g_i(x,y)$ such that $|F_\eps^{i}(x,y)|\leq g_i(x,y)$ uniformly in $\eps\in (0,1)$ and $g_i|\cdot-\cdot|^{-\gamma^2/2}\in L^1(D\times D)$
\end{lemma}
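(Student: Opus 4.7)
The strategy is a Cameron--Martin shift for the Gaussian field $S_\eps$. For any functional $F(S_\eps)\geq 0$, the Cameron--Martin formula gives
\begin{align*}
\E\abr{e^{\gamma(S_\eps(x)+S_\eps(y))}F(S_\eps)}
=\E\abr{e^{\gamma(S_\eps(x)+S_\eps(y))}}\,\E\abr{F(S_\eps+h_{x,y})},
\end{align*}
where $h_{x,y}(z):=\gamma K_\eps(z,x)+\gamma K_\eps(z,y)$. Since $A_{\gamma_0,\eps}(z)$ depends on $S_\eps$ only through the process $Y_{\eps,z}(u)=S_\eps(z+\eps u)-S_\eps(z)$ (the measure $\nu_{\gamma_0}^{\eps,z}$ being independent of $S_\eps$), under this shift $Y_{\eps,z}$ is replaced by $Y_{\eps,z}+m^{x,y}_{\eps,z}$, where
\begin{align*}
m^{x,y}_{\eps,z}(u)=\gamma\abr{K_\eps(z+\eps u,x)-K_\eps(z,x)}+\gamma\abr{K_\eps(z+\eps u,y)-K_\eps(z,y)}.
\end{align*}
Applied to each of the three target ratios, together with analogous partial shifts on the denominators, the Gaussian prefactors combine to reproduce the factor $e^{(\gamma^2/2)\E[S_\eps(x)S_\eps(y)]}$ asserted in the lemma, while the remaining quotient defines $F_\eps^{(i)}(x,y)$ as a ratio of expectations of shifted versions of $A_{\gamma_0,\eps}$.

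For the pointwise convergence $F_\eps^{(i)}(x,y)\to 1$, fix $x\neq y$ in $D$ and take $\eps$ small enough that $|x-y|\geq 3\eps$. The cross-shift $K_\eps(z+\eps u,w)-K_\eps(z,w)$ for well-separated $z\neq w$ vanishes uniformly in $|u|\leq 1$ as $\eps\to 0$, by Hölder continuity of the seed covariance $k$. By \Cref{prop: independence of nu eps x and nu eps x'} the measures $\nu_{\gamma_0}^{\eps,x}$ and $\nu_{\gamma_0}^{\eps,y}$ are independent on $\supp(\eta)$, and by \Cref{prop: convergence of Yx and Yx'} the processes $Y_{\eps,x},Y_{\eps,y}$ converge jointly in law to independent limits. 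Together with the uniform integrability from the next step, these allow us to pass to the limit inside the expectations: in the numerator of each $F_\eps^{(i)}$ the cross-shift contributions drop out, and the joint expectation factorizes into the product of the single-point expectations appearing in the denominator, giving $F_\eps^{(i)}(x,y)\to 1$.

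The uniform majorant is obtained by Cauchy--Schwarz on the numerator,
\begin{align*}
\E\abr{e^{\gamma(\tilde A(x)+\tilde A(y))}}\leq\sqrt{\E\abr{e^{2\gamma\tilde A(x)}}\,\E\abr{e^{2\gamma\tilde A(y)}}},
\end{align*}
where $\tilde A$ denotes the shifted counterpart of $A_{\gamma_0,\eps}$. Each factor is bounded uniformly in $\eps\in(0,1)$ and $z\in\supp(\psi)$ by combining (i) the exponential moment control of $\sup_{|u|\leq 1}|Y_{\eps,z}(u)|$ from \Cref{lemma: even moments of Y}, (ii) a uniform bound on the self-shift $m^{z,z}_{\eps,z}$ on $\supp(\eta)$, and (iii) the finiteness of $\E\abr{\nu_{\gamma_0,B_1(0)}(\eta)^{2\gamma/\gamma_0}}$, which holds as long as $2\gamma/\gamma_0<q_c(\gamma_0)=2d/\gamma_0^2$, i.e., $\gamma\gamma_0<d$, and this is satisfied throughout the $L^2$-phase. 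A matching lower bound on the denominator follows from Jensen's inequality together with the uniform bounds on $\E[A_{\gamma_0,\eps}(z)]$ obtained in the proof of \Cref{prop:finiteness of the counter term}. One may therefore take $g_i\equiv C$ constant, and $C|x-y|^{-\gamma^2/2}$ is integrable on $D\times D$ since $\gamma^2<d$.

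The hardest step is the uniform bound: the Cauchy--Schwarz argument doubles the effective chaos exponent from $\gamma/\gamma_0$ to $2\gamma/\gamma_0$, and it is precisely the $L^2$-phase restriction $\gamma,\gamma_0<\sqrt{d}$ that keeps this doubled moment of $\nu_{\gamma_0}$ finite; outside this phase a more delicate truncation/stopping argument on the chaos would be required. Once the lemma is in hand, \Cref{thm: one gamma determines GMC for all gamma} follows by dominated convergence applied to the combination $F_\eps^{(1)}-F_\eps^{(2)}-F_\eps^{(3)}+1\to 0$ against the integrable singularity $|x-y|^{-\gamma^2/2}$ supplied by the Gaussian prefactor.
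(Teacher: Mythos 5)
Your approach coincides with the paper's: a Cameron--Martin (Girsanov) shift of $S_\eps$, isolating the Gaussian prefactor $e^{(\gamma^2/2)\E[S_\eps(x)S_\eps(y)]}$, and then showing that the remaining shifted products converge in $L^1$ by combining joint convergence in distribution (\Cref{prop: convergence of Yx and Yx'}, \Cref{prop: independence of nu eps x and nu eps x'}) with uniform integrability. Your Cauchy--Schwarz split $\E[B_xB_y]\le\sqrt{\E[B_x^2]\E[B_y^2]}$ is the $q=2$, $p=\infty$ endpoint of the paper's four-factor H\"older with $\tfrac{2}{p}+\tfrac{2}{q}=1$, which is in fact the choice that maximizes the admissible $\delta$, so the two estimates are quantitatively equivalent. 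One small imprecision: to conclude uniform integrability of $B_xB_y$ (needed to upgrade distributional convergence to convergence of expectations), you must run the Cauchy--Schwarz argument at the level of the $(1+\delta)$-th moment, so the doubled chaos exponent is really $2(1+\delta)\gamma/\gamma_0$ rather than $2\gamma/\gamma_0$; this still fits strictly below $q_c(\gamma_0)=2d/\gamma_0^2$ for some $\delta>0$ precisely because $\gamma\gamma_0<d$ is a strict inequality in the $L^2$-phase, which is exactly the paper's \Cref{eq: UI of B}.
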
 

\begin{proof}
We can do all the rest of the involved expectations at once by considering
\begin{align*}
\E[e^{\gamma[\sigma_x S_\eps(x)+\sigma_yS_{\eps}(y)+a_xA_{\gamma_0,\eps}(x)+a_yA_{\gamma_0,\eps}(y)]}],
\end{align*}
where $\sigma_z,a_z\in \{0,1\}$ for $z=x,y$. 

Next let us use the Girsanov-Cameron-Martin change of measure. We will need it in the form
\begin{align*}
\E[e^{X_1+X_2}F(Y_1,Y_2)]&=e^{\half\E[X_1^2]+\half\E[X_2^2]+\E[X_1X_2]}
\\
&\quad\times\E\abr{F\rbr{Y_1+\sum_{i=1}^2\E[Y_1X_i],Y_2+\sum_{i=1}^2\E[Y_2X_i]}},
\end{align*}
where $Y_i$, $i=1,2$ are Gaussian fields on $D\subset\R^d$ with realizations on suitable space $\Fcal$ and $X_i$, $i=1,2$ are Gaussian random variables, that are also jointly Gaussian with the fields $Y_i$. For us $X_i=\gamma\sigma_{z}S_{\eps}(z)$ and $Y_i=Y_{\eps,z}$, where $z=x$ for $i=1$ and $z=y$ for $i=2$. \cite[Theorem 2.8]{Da06a} covers this case. Then we have by independence and the uniform bounds proven below
\begin{align*}
\E[&e^{\gamma[\sigma_x S_\eps(x)+\sigma_yS_{\eps}(y)+a_xA_{\gamma_0,\eps}(x)+a_yA_{\gamma_0,\eps}(y)]}]
\\
=&\E\bigg[\E\bigg[e^{\gamma_0(\sigma_xS_\eps(x)+\sigma_yS_\eps(y))}\rbr{\int_{\R^d}\eta(u)e^{\gamma_0Y_{\eps,x}(u)}\mu_1(\du)}^{a_x\frac{\gamma}{\gamma_0}}
\\
&\quad\times\rbr{\int_{\R^d}\eta(u)e^{\gamma_0Y_{\eps,y}(u)}\mu_2(\du)}^{a_y\frac{\gamma}{\gamma_0}}\bigg]\bigg\vert_{\nu_{\gamma_0}^{\eps,z}=\mu_i}\bigg]
\\
=&\E\bigg[\E\bigg[e^{\gamma_0(\half\sigma_x\gamma\E[(S_\eps(x))^2]+\half\sigma_y\gamma\E[(S_\eps(y))^2]+\sigma_x\sigma_y\gamma\E[S_\eps(x)S_\eps(y)])}
\\
&\quad\times\rbr{\int_{\R^d}\eta(u)e^{\gamma_0(Y_{\eps,x}(u)+\sum_{z=x,y}\sigma_z\gamma\E[(S_\eps(z)Y_{\eps,x}(u)])}\mu_1(\du)}^{a_x\frac{\gamma}{\gamma_0}}
\\
&\quad\times\rbr{\int_{\R^d}\eta(u)e^{\gamma_0(Y_{\eps,y}(u)+\sum_{z=x,y}\sigma_z\gamma\E[(S_\eps(z)Y_{\eps,y}(u)])}\mu_2(\du)}^{a_y\frac{\gamma}{\gamma_0}}
\bigg]\bigg\vert_{\nu_{\gamma_0}^{\eps,z}=\mu_i}\bigg]
\\
=&\E\bigg[ e^{\gamma_0(\half\sigma_x\gamma\E[(S_\eps(x))^2]+\half\sigma_y\gamma\E[(S_\eps(y))^2]+\sigma_x\sigma_y\gamma\E[S_\eps(x)S_\eps(y)])}
\\
&\quad\times\rbr{\int_{\R^d}\eta(u)e^{\gamma_0(Y_{\eps,x}(u)+\sum_{z=x,y}\sigma_z\gamma\E[(S_\eps(z)Y_{\eps,x}(u)])}\nu_{\gamma_0}^{\eps,x}(\du)}^{a_x\frac{\gamma}{\gamma_0}}
\\
&\quad\times\rbr{\int_{\R^d}\eta(u)e^{\gamma_0(Y_{\eps,y}(u)+\sum_{z=x,y}\sigma_z\gamma\E[(S_\eps(z)Y_{\eps,y}(u)])}\nu_{\gamma_0}^{\eps,y}(\du)}^{a_y\frac{\gamma}{\gamma_0}}
\bigg],
\end{align*}
where the outer expectation is with respect to  $\nu_{\gamma_0}^{\eps,x}$ and $\nu_{\gamma_0}^{\eps,y}$
jointly and the inner expectations is with respect $S_\eps(x),S_\eps(y), (Y_{\eps,x}(u))_{u\in B_1(0)}$ and $(Y_{\eps,x}(u))_{u\in B_1(0)}$ jointly with fixed $x,y\in \supp(f)$. In the fixing of the realization in the middle steps $z=x$ if $i=1$ and $z=y$ if $i=2$. We have used the Cameron-Martin-Girsanov to the inner expectation. 
  
Thus, we readily see that in all three terms in \Cref{eq: gamma_0 difference of squares} the variance terms also cancel, and we obtain the same covariance term as in the last term. Then we just need to show that the rest of each term converges to $1$ for each term separately. To simplify notation let us write
\begin{align*}
B_{\eps,z}^{\gamma,\gamma_0}(x,y):=\rbr{\int_{\R^d}\eta(u)e^{\gamma_0Y_{\eps,z}(u)}e^{\gamma\gamma_0L_{\eps,z}^{\gamma,\gamma_0}(x,y,u)}\nu_{\gamma_0}^{\eps,z}(\du)}^{a_z\frac{\gamma}{\gamma_0}},
\end{align*}
where
\begin{align*}
L_{\eps,z}^{\gamma,\gamma_0}(u,x,y)=\sigma_x\E[Y_{\eps,z}(u)S_{\eps}(x)]+\sigma_y\E[Y_{\eps,z}(u)S_{\eps}(y)].
\end{align*}
Thus, we need to prove the following 
\begin{align*}
\frac{\E[B_{\eps,x}^{\gamma,\gamma_0}(x,y)B_{\eps,y}^{\gamma,\gamma_0}(x,y)]}{\E[B_{\eps,x}^{\gamma,\gamma_0}(x,y)]\E[B_{\eps,x}^{\gamma,\gamma_0}(x,y)]}\overset{\eps\to 0}{\longrightarrow} 1,
\end{align*}
which will prove the conditions $F_\eps^{(i)}(x,y)\to 1$. To this end, we show that $\{B_{\eps,z}^{\gamma,\gamma_0}(x,y)\}_{\eps\in (0,1)}$ for $z=x,y$ and $\{B_{\eps,x}^{\gamma,\gamma_0}(x,y)B_{\eps,y}^{\gamma,\gamma_0}(x,y)\}_{\eps\in (0,1)}$ are uniformly integrable with strictly positive expectations so that they converge in $L^1$. In addition, we show that the limits $\lim_{\eps\to 0}B_{\eps,z}^{\gamma_0,\gamma}(x,y)$ (where the limit is in distribution) for $z=x$ and $z=y$ are identically distributed and independent. Note that our estimates below also give the desired upper bounds. For the first claim, we show that
\begin{align}
\label{eq: UI of B}
\sup_{\eps\in (0,1)}\E[|B_{\eps,x}^{\gamma,\gamma_0}(x,y)B_{\eps,y}^{\gamma,\gamma_0}(x,y)|^{1+\delta}]<\infty
\end{align} 
for some $\delta>0$. Note that we also have
\begin{align*}
\label{eq:Boundedness from belov of the exponential moments of B}
\E[B_{\eps,z}^{\gamma,\gamma_0}]\geq & \E\abr{\inf_{u\in B_1(0)}e^{\gamma_0 Y_{\eps,z}(u)}}\inf_{\substack{u\in B_1(0)\\ x,y\in\supp(f)}}e^{\gamma\gamma_0 L_{\eps,z}^{\gamma,\gamma_0}(x,y,u)}
\\
\geq& \E\abr{e^{-\gamma \sup_{u\in B_1(0)}|Y_{\eps,z}(u)|}}\exp(-\gamma\gamma_0\sup_{\substack{u\in B_1(0) \\ x,y\in\supp(f)}}|L_{\eps,z}^{\gamma,\gamma_0}(x,y,u)|),
\end{align*}
where we used the facts that $Y_{\eps,z}$ is independent of $\nu_{\gamma_0}^{\eps,z}$, and the expectation of the latter is just the Lebesgue measure and $\int \eta dx=1$. The first factor above is strictly positive by \Cref{lemma: even moments of Y} and below we show that the supremum in the exponent of the second factor is finite. For the second claim, we show convergence in distribution with the same strategy that we used to prove that $(A_{\gamma_0,\eps}(x),A_{\gamma_0,\eps}(y))$ converge in distribution to two copies of identically distributed random variables in the proof of \Cref{thm:field from the measure star scale invariant case}.

Let us consider the second claim first. Now instead of proving one function is continuous, we have to show that a sequence of functions corresponding to arbitrary deterministic sequence $\eps_n\to 0$ satisfies the conditions of the continuous function theorem \cite[Theorem 4.27]{Ka02a}. The sequence of functions $F_n\colon C(\overline{B_1(0)})\to\R$ is given by 
\begin{align*}
f\mapsto \int_{\R^d}\eta(u)e^{\gamma_0 f(u)}e^{\gamma\gamma_0 L_{\eps_n,x}^{\gamma,\gamma_0}(x,y,u)}\mu(\du),
\end{align*}
where $\mu$ is an arbitrary realization of $\nu_{\gamma,B_1(0)}$. The other case is similar. Thus, we need to show that $F_n(f_n)\to F(f)$ for arbitrary $f\in C(\overline{B_1(0)})$ and any sequence $f_n\to f$ in $C(\overline{B_1(0)})$. Above $F$ is such that $F_n(f)\to F(f)$ in $\R$ for fixed $f\in C(\overline{B_1(0)})$. In the proof that $(A_{\eps,\gamma_0}(x),A_{\eps,\gamma_0}(y))$ converges in distribution we showed the continuity of the above map without the factor $\exp(\gamma\gamma_0L_{\eps_n,x}^{\gamma,\gamma_0}(x,y,u))$. Thus, we are done if we have a uniform upper bound for this term. First we have
\begin{align*}
|\E[Y_{\eps_n,x}(u)S_{\eps_n}(z)]|\leq&\int_1^{\eps_n^{-1}}|k_1(s|x-z+\eps_n u|)-k_1(s|x-z|)|\frac{\ds}{s}
\\
\leq &\int_{0}^1|k_1(s|\eps_n^{-1}(x-z)+u|)-k_1(s|\eps_n^{-1}(x-y)|)|\frac{\ds}{s}
\\
\leq &\frac{|u|^\alpha}{\alpha},
\end{align*}
where $\alpha>0$ is the Hölder-exponent of $k_1$. Furthermore,
\begin{align*}
\E[Y_{\eps_n,x}(u)S_{\eps_n}(z)]\overset{n\to\infty}{\longrightarrow}
\begin{cases}
l(u):=\int_0^1(k_1(s|u|)-1)\frac{\ds}{s} &\text{ if } z=x
\\
0 &\text{ if } z=y.
\end{cases}
\end{align*}
Therefore, we have
\begin{align*}
e^{\gamma\gamma_0L_{\eps_n,z}^{\gamma,\gamma_0}(x,y,u)}\leq  \sup_{u\in B_1(0)}e^{(\sigma_x+\sigma_y)\frac{\gamma\gamma_0}{\alpha}|u|^\alpha}\leq C,
\end{align*}
where the constant $C>0$ is independent of $x,y$ and $n$. Therefore, by the argument given in the proof of \ref{thm:field from the measure star scale invariant case} for the convergence in distribution of $(A_\eps(x),A_\eps(x'))$ we have
\begin{align*}
\bigg(\int_{\R^d}&\eta(u)e^{\gamma_0 Y_{\eps,x}(u)}e^{\gamma\gamma_0 L_{\eps_n,x}^{\gamma,\gamma_0}(x,y,u)}\nu_{\gamma_0}^{\eps,x}(\du),
\\
&\int_{\R^d}\eta(u)e^{\gamma_0 Y_{\eps,y}(u)}e^{\gamma\gamma_0 L_{\eps_n,x}^{\gamma,\gamma_0}(x,y,u)}\nu_{\gamma_0}^{\eps,y}(\du)\bigg)
\\
&\overset{d}{\longrightarrow}\bigg(\int_{\R^d}\eta(u)e^{\gamma_0 G_1(u)}e^{\gamma\gamma_0\sigma_x l(u)}\nu_{\gamma_0,1}(\du),
\\
&\qquad\qquad\int_{\R^d}\eta(u)e^{\gamma_0 G_2(u)}e^{\gamma\gamma_0\sigma_y l(u)}\nu_{\gamma_0,2}(\du)\bigg),
\end{align*}
where $\nu_{\gamma_0, i}$, $i=1,2$ are two independent copies of $\nu_{\gamma_0,B_1(0)}$ and $(G_1,G_2)$ is the limit in distribution of $(Y_{\eps,x},Y_{\eps,y})$, where the components are independent and identically distributed as shown in \Cref{prop: convergence of Yx and Yx'}. 

Then let us prove \Cref{eq: UI of B}. We have by Hölder's  inequality 
\begin{align*}
\E&[|B_{\eps,x}^{\gamma,\gamma_0}(x,y)B_{\eps,y}^{\gamma,\gamma_0}(x,y)|^{1+\delta}]
\\
&\leq C\E\bigg[e^{\gamma(1+\delta) \sup_{u\in B_1(0)}|Y_{\eps,x}|}e^{\gamma(1+\delta) \sup_{u\in B_1(0)}|Y_{\eps,x}|}
\\
&\qquad\qquad\times(\nu_{\gamma}^{\eps,x}(|\eta|))^{\sigma_x\frac{\gamma}{\gamma_0}(1+\delta)}(\nu_{\gamma}^{\eps,y}(|\eta|))^{\sigma_y\frac{\gamma}{\gamma_0}(1+\delta)}\bigg]
\\
&\leq 
C\E\abr{\rbr{e^{\gamma(1+\delta) \sup_{u\in B_1(0)}|Y_{\eps,x}|}}^{p}}^\frac{1}{p}\E\abr{\rbr{e^{\gamma(1+\delta) \sup_{u\in B_1(0)}|Y_{\eps,y}|}}^{p}}^\frac{1}{p}
\\
&\quad\times\E\abr{\rbr{\nu_{\gamma_0,1}(|\eta|)^{\sigma_x\frac{\gamma}{\gamma_0}(1+\delta)}}^q}^\frac{1}{q}\E\abr{\rbr{\nu_{\gamma_0,1}(|\eta|)^{\sigma_y\frac{\gamma}{\gamma_0}(1+\delta)}}^q}^\frac{1}{q}
\end{align*}
provided we can find $p,q\in (2,\infty]$ with $2/p+2/q=1$ such that the last expression is finite. Above we have used the fact that $\mu_{\gamma_0}^{\eps,z}$ are distributed like $\nu_{\gamma_0,B_1(0)}$, and denoted by $\nu_{\gamma_0,i}$, $i=1,2$ two independent copies of this measure. By \Cref{lemma: even moments of Y} the first two factors are finite. Therefore, the threshold in \Cref{thm: Existence and moments of star scale invariant GMC} for the existence of the moments of $\nu_\gamma$ determines $p$ and $q$. We must have
\begin{align*}
\begin{cases}
\frac{\gamma}{\gamma_0}(1+\delta)q<\frac{2d}{\gamma_0^2}
\\
\quad\quad\frac{2}{q}+\frac{2}{p}=1
\end{cases}
\Leftrightarrow
\quad
\begin{cases}
\delta<\frac{2d}{\gamma\gamma_0 q}-1
\\
q=\frac{2}{1-\frac{2}{p}}.
\end{cases}
\end{align*}
For $\gamma_0,\gamma\in[0,\sqrt{d})$ these equations can always be satisfied with some $\delta >0$ and $p\in [2,\infty)$.
\end{proof}
\noindent
Since 
\Cref{eq:upper bound for the covariance exponential} together with the upper bounds in the last lemma justify the use of Fubini and dominated convergence in \Cref{eq: gamma_0 difference of squares} we are done.
\end{proof}

\bibliographystyle{plain}

\end{document}